
\documentclass[12pt]{article}

\usepackage{amsmath}
\usepackage{amsfonts}
\usepackage{latexsym}
\usepackage{amssymb}
\usepackage{curves}
\usepackage{graphicx}
\usepackage{wasysym}
\usepackage{MnSymbol}
\usepackage{amsthm,amscd}

\newtheorem{thm}{Theorem}[section]
\newtheorem{pro}[thm]{Proposition}
\newtheorem{cor}[thm]{Corollary}
\newtheorem{lem}[thm]{Lemma}

\newtheorem{example}[thm]{Example}
\newtheorem{rem}[thm]{Remark}

\newcommand{\mapsfrom}
{\mathrel{\reflectbox{\ensuremath{\mapsto}}}}

\newcommand{\comp}{\, {}_\circ \,}

\newcommand{\noin}{\noindent}

\newcommand{\dozspace}{\;\;\;\;\;\;\;\;\;\;\;\;}

\newcommand{\kay}{{\mathbb{K}}}

\newcommand{\FF}{{\mathbb{F}}}

\newcommand{\NN}{{\mathbb{N}}}
\newcommand{\OO}{{\mathbb{O}}}

\newcommand{\QQ}{{\mathbb{Q}}}

\newcommand{\ZZ}{{\mathbb{Z}}}

\newcommand{\openbin}{\left( \!\! \begin{array}{c}}
\newcommand{\closebin}{\end{array} \!\! \right)}
\newcommand{\openvec}{\left[ \!\! \begin{array}{c}}
\newcommand{\closevec}{\end{array} \!\! \right]}
\newcommand{\openmat}{\left[ \!\! \begin{array}{cc}}
\newcommand{\closemat}{\end{array} \!\! \right]}
\newcommand{\opentri}{\left[ \!\! \begin{array}{ccc}}
\newcommand{\closetri}{\end{array} \!\! \right]}
\newcommand{\openquad}{\left[ \!\! \begin{array}{cccc}}
\newcommand{\closequad}{\end{array} \!\! \right]}
\newcommand{\openquin}{\left[ \!\! \begin{array}{ccccc}}
\newcommand{\closequin}{\end{array} \!\! \right]}
\newcommand{\opensex}{\left[ \!\! \begin{array}{cccccc}}
\newcommand{\closesex}{\end{array} \!\! \right]}

\newcommand{\ltri}{\medtriangleleft}
\newcommand{\rtri}{\medtriangleright}

\newcommand{\dash}{\mbox{\rm{--}}}

\newcommand{\Aut}{{\rm{Aut}}}

\newcommand{\End}{\mbox{\rm{End}}}

\newcommand{\Ind}{\mbox{\rm{Ind}}}

\newcommand{\Iso}{\mbox{\rm{Iso}}}

\newcommand{\Mat}{\mbox{\rm{Mat}}}

\newcommand{\Res}{\mbox{\rm{Res}}}

\newcommand{\ad}{{\rm{ad}}}

\newcommand{\epi}{{\rm{epi}}}

\newcommand{\id}{{\rm{id}}}

\newcommand{\len}{\mbox{\rm{len}}}

\newcommand{\mob}{\mbox{\rm{m\"{o}b}}}
\renewcommand{\mod}{\mbox{\rm{mod}}}
\newcommand{\mor}{\mbox{\rm{mor}}}
\newcommand{\obj}{{\rm{obj}}}

\newcommand{\tr}{\mbox{\rm{tr}}}

\newcommand{\oo}{\overline}
\newcommand{\uu}{\underline}

\newcommand{\frakP}{{\mathfrak P}}

\newcommand{\frako}{{\mathfrak o}}

\newcommand{\cA}{{\cal A}}
\newcommand{\cB}{{\cal B}}
\newcommand{\cC}{{\cal C}}

\newcommand{\cE}{{\cal E}}

\newcommand{\cK}{{\cal K}}
\newcommand{\cL}{{\cal L}}
\newcommand{\cM}{{\cal M}}

\newcommand{\cO}{{\cal O}}
\newcommand{\cP}{{\cal P}}

\newcommand{\cR}{{\cal R}}
\newcommand{\cS}{{\cal S}}
\newcommand{\cT}{{\cal T}}

\addtolength{\textheight}{0.18\textheight}
\addtolength{\textwidth}{0.15\textwidth}

\addtolength{\oddsidemargin}{-0.6in}
\addtolength{\evensidemargin}{-0.6in}

\begin{document}


\title{Semisimplicity of some deformations of
the subgroup category and the biset category}

\author{\large Laurence
Barker\footnote{e-mail: barker@fen.bilkent.edu.tr,
Some of this work was done while this author was
on sabbatical leave, visiting the Department of
Mathematics at City, University of London.}
\hspace{1in} \.{I}smail Alperen
\"{O}\u{g}\"{u}t\footnote{e-mail: ismail.ogut@bilkent.edu.tr} \\
\mbox{} \\
Department of Mathematics \\
Bilkent University \\
06800 Bilkent, Ankara, Turkey \\
\mbox{}}

\maketitle

\small
\begin{abstract}
\noin We introduce some deformations of
the biset category and prove a
semisimplicity property. We also
consider another group category, called
the subgroup category, whose morphisms
are subgroups of direct products, the
composition being star product. For some
deformations of the subgroup category,
too, we prove a semisimplicity property.
The method is to embed the deformations
of the biset category into the more easily
described deformations of the subgroup
group category.

\smallskip
\noin 2010 {\it Mathematics Subject Classification:}
Primary 19A22, Secondary 16B50.

\smallskip
\noin {\it Keywords:} twisted category algebra,
semisimple category, semisimple deformation,
seed, vanishing problem.
\end{abstract}

\section{Introduction}

This paper concerns two categories and
some of their deformations. One of those
two can be defined immediately without
any specialist prerequisites. We define the
{\bf subgroup category} $\cS$ as follows.
The class of objects of $\cS$ is the class
of groups. Consider groups $R$, $S$, $T$.
Let us write the elements of the direct
product $R {\times} S$ in the form
$r {\times} s$ instead of $(r, s)$. We
define the set of $\cS$-morphisms
$R \leftarrow S$ to be the set of
subgroups of $R {\times} S$. Given
subgroups $U \leq R {\times} S$ and
$V \leq S {\times} T$, we define the
$\cS$-composite of $U$ and $V$,
denoted $U * V$, to be the subgroup of
$R {\times} T$ consisting of those elements
$r {\times} t$ such that there exists
$s \in S$ satisfying $r {\times} s \in U$
and $s {\times} t \in V$. The operation
$*$, called the {\bf star product}, is
familiar in the theory of bisets, as in
Bouc [Bou10, 2.3.19], for instance.

We mention that a category constructed
on similar lines, with finite sets as objects,
has been shown to admit rich theory
by Bouc--Th\'{e}venaz [BT] and citations
therein.

Preliminaries aside, the groups under
consideration will be finite. We let $\cK$
be any non-empty set of finite groups.
For some of the categories we shall be
considering, the objects are arbitrary
finite groups. To preempt logical
objections when applying ring theoretic
techniques, we shall often be passing
from a large category to a small full
subcategory whose set of objects is
$\cK$. However, up to equivalence of
categories, the dependence on $\cK$
will be only on the group isomorphism
classes occurring in $\cK$. So our
rigorous constraint on expression will
impose little constraint on interpretation.

We let $\kay$ be a field of characteristic
zero. The role of $\kay$ will be as a
coefficient ring. Some easy generalizations
to other coefficient rings do not seem to
merit tergiversation over hypotheses.

The scenario under investigation will be
determined by $\cK$ and $\kay$, together
with one other item of data, namely, a
monoid homomorphism $\ell$ to the unit
group $\kay^\times = \kay - \{ 0 \}$
from the monoid of positive integers
$\NN - \{ 0 \}$. Let $\Pi$ denote the set
of primes. Then $\ell$ is determined by
the family $(\ell(q) : q \in \Pi)$ of arbitrary
elements $\ell(q) \in \kay^\times$. We
call $\ell$ {\bf algebraically independent}
with respect to $\cK$ when, letting $q$
run over the prime divisors of the orders
of the groups in $\cK$, the $\ell(q)$ are
algebraically independent over the
minimal subfield $\QQ$ of $\kay$.
The role of $\ell$ will be to parameterize
some deformations of some categories.

Let $\cS_\cK$ denote the full subcategory
of $\cS$ such that the objects of $\cS_\cK$
are the elements of $\cK$. In Section 2,
we shall construct an algebra $\Lambda$
over $\kay$. The morphisms in $\cS_\cK$
index a $\kay$-basis for $\Lambda$,
called the square basis. In terms of the
square basis, the multiplication operation
derives from the composition operation
in the most straightforward imaginable
way, except for the introduction of a
cocycle that depends on $\ell$. Thus,
employing some terminology that will
be defined in that section, $\Lambda$
is a twisted category algebra of
$\cS_\cK$.

We can view $\Lambda$ as a deformation
of the category algebra $\kay \cS_\cK$
of $\cS_\cK$. Our reason for mentioning
the less specific notion of a
deformation is that, in Section 9, we
shall be making use of $\Lambda$ to
examine an algebra $\Gamma$ that
can be viewed as a deformation of the
category $\kay \cB_\cK$ obtained
from the biset category $\cB$ by
extending coefficients to $\kay$ and
confining the objects to $\cK$. (To
prevent misunderstanding, we mention
that, in the notation of Section 2, the
category algebra of $\cB_\cK$ over
$\kay$ could also be written as
$\kay \cB_\cK$, but we shall not be
considering that category algebra
in this paper.)

Corollary \ref{6.5} says that local
semisimplicity never holds for the category
algebra $\kay \cS_\cK$, except in trivial
cases. Nevertheless, our main result,
proved in Section 7, is that $\Lambda$ is
semisimple in the following generic sense.

\begin{thm}
\label{1.1}
If $\ell$ is algebraically independent
with respect to $\cK$, then
$\Lambda$ is locally semisimple.
\end{thm}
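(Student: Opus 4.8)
The plan is to strip away a sequence of routine base changes until the statement becomes a genericity assertion about one finite-dimensional algebra, and then to settle that assertion by constructing the simple modules from ``seeds''. Since $\Lambda$ is locally semisimple as soon as $e_\cF \Lambda e_\cF$ is semisimple for every finite set $\cF$ of objects (here $e_\cF = \sum_{R \in \cF} \Delta(R)$ is the sum of the relevant identity morphisms), and $e_\cF \Lambda e_\cF$ is again a twisted category algebra of the same shape with $\ell$ still algebraically independent with respect to $\cF$, we may assume $\cK$ is finite and $\Lambda$ is finite-dimensional over $\kay$. In the square basis, the structure constants of $\Lambda$ lie in the subring of $\kay$ generated over $\ZZ$ by the $\ell(q)^{\pm 1}$, where $q$ runs over the prime divisors of the orders of the groups in $\cK$. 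By the algebraic independence hypothesis this subring is a Laurent polynomial ring, so its fraction field $\FF$ is purely transcendental over $\QQ$ and $\Lambda = \Lambda_\FF \otimes_\FF \kay$ for the subalgebra $\Lambda_\FF$ spanned over $\FF$ by the square basis. As $\cchar \kay = 0$, finite-dimensional semisimplicity is unaffected by field extension, so it suffices to prove $\Lambda_\FF$ semisimple; transporting along a field isomorphism $\FF \cong \QQ(t_q : q)$ sending $\ell(q) \mapsto t_q$, this is the semisimplicity of the ``generic'' algebra $\Lambda^{\mathrm{gen}}$ got by replacing the scalars $\ell(q)$ with indeterminates $t_q$, and, extending scalars to an algebraic closure $\Omega$ of $\QQ(t_q : q)$, it is enough to show $\Lambda^{\mathrm{gen}} \otimes \Omega$ is semisimple.

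For this I would run the ``seed'' analysis familiar from biset functor theory and its analogues. By Goursat's lemma every morphism $U \leq R \times S$ of $\cS_\cK$ factors canonically, through a section $p_1(U)/k_1(U)$ of $R$ and a section $p_2(U)/k_2(U)$ of $S$, as a composite of inflations, a section isomorphism and deflations; filtering $\Lambda^{\mathrm{gen}} \otimes \Omega$ by this data yields layers indexed by the isomorphism classes of sections $L$ of groups in $\cK$, the layer at $L$ being controlled by a twisted group algebra $\Omega_\ell[\Xi_L]$ for a finite symmetry group $\Xi_L$ of $L$ (a variant of $\Out(L)$). Twisted group algebras of finite groups are semisimple in characteristic zero, so for each \emph{seed} $(L,V)$ --- a section $L$ together with a simple $\Omega_\ell[\Xi_L]$-module $V$, up to the evident equivalence --- the machinery produces a ``standard'' $\Lambda^{\mathrm{gen}} \otimes \Omega$-module $\Delta_{L,V}$ with simple top $S_{L,V}$, the $S_{L,V}$ being pairwise non-isomorphic and exhausting the simple modules. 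The factorization combinatorics give the parameter-independent identity $\sum_{(L,V)} (\dim_\Omega \Delta_{L,V})^2 = \dim_\Omega (\Lambda^{\mathrm{gen}} \otimes \Omega)$, and for a fixed seed one has $\dim S_{L,V} = \dim \Delta_{L,V}$ exactly when a canonical bilinear form $\langle \ , \ \rangle_{L,V}$ on $\Delta_{L,V}$ is non-degenerate. Since $\sum_{(L,V)} (\dim S_{L,V})^2 \leq \dim(\Lambda^{\mathrm{gen}} \otimes \Omega)$ with equality if and only if the algebra is semisimple, the theorem reduces to the non-degeneracy of each form $\langle \ , \ \rangle_{L,V}$.

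The surviving point --- which I expect to be the real obstacle, and which is the ``vanishing problem'' named in the abstract --- is that the Gram determinant of each $\langle \ , \ \rangle_{L,V}$, a Laurent polynomial in the $t_q$, is not identically zero; granting this, algebraic independence of the $\ell(q)$ forces it to be non-zero at $(\ell(q))_q$, finishing the proof. One cannot establish non-vanishing by putting all $t_q = 1$: that specialization is the untwisted category algebra $\kay \cS_\cK$, for which Corollary~\ref{6.5} says precisely that the form degenerates, which is why a non-trivial deformation is needed at all. I would instead localize the difficulty: the degeneracy at $t \equiv 1$ is carried by the sub-block of the Gram matrix recording composites of a chain of inflations with a chain of deflations through $L$ (indexed by chains of normal subgroups), and it should be enough to evaluate the determinant of that block on minimal configurations --- a single group with one normal subgroup of prime-power order, then a chief series. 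I anticipate that this block, up to a monomial unit, is Vandermonde-like in powers of the $\ell(q)$, so that its determinant is a product of non-zero factors $\ell(q)^a - \ell(q)^b$ with $a \neq b$, manifestly a non-zero element of the Laurent polynomial ring, and that the full Gram determinant inherits non-vanishing from these blocks. Once the vanishing problem is settled, every $\langle \ , \ \rangle_{L,V}$ is non-degenerate, so $S_{L,V} = \Delta_{L,V}$ for every seed, whence $\sum (\dim S_{L,V})^2 = \dim(\Lambda^{\mathrm{gen}} \otimes \Omega)$ and $\Lambda^{\mathrm{gen}} \otimes \Omega$, hence $\Lambda$, is semisimple.
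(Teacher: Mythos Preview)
Your reductions are sound: passing to finite $\cK$, then to the purely transcendental subfield generated by the $\ell(q)$, then to an algebraic closure, are all legitimate and cost nothing. Your instinct that the content lives in a non-degeneracy statement for a family of bilinear forms (or equivalently, invertibility of a family of matrices) is also correct, and your observation that specialising all $t_q$ to $1$ kills the argument---since that gives the non-semisimple $\kay\cS_\cK$---is exactly right.

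But there is a genuine gap. You \emph{assert} the existence of a standard-module/cellular apparatus (the $\Delta_{L,V}$, the dimension identity $\sum(\dim\Delta_{L,V})^2=\dim\Lambda$, the bilinear forms $\langle\ ,\ \rangle_{L,V}$) without constructing it; none of this is set up in the paper and it would have to be built from scratch. More seriously, the step you yourself flag as ``the real obstacle''---non-vanishing of the Gram determinants---is left as a hope: you \emph{anticipate} a Vandermonde-like block with factors $\ell(q)^a-\ell(q)^b$, but give no argument, and in fact that is not the structure that emerges. The relevant matrices are not Vandermonde; what makes their determinants non-zero is a \emph{degree} argument, not a factorisation into linear differences.

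The paper avoids the cellular machinery entirely. Working in the round basis of Boltje--Danz (Theorem~\ref{5.2}), it proves an inductive criterion (Theorem~\ref{7.3}): $\Lambda$ is locally semisimple provided certain explicit square matrices $T_E^L$ are invertible, one for each pair (subgroup $L$ of a group in $\cK$, quotient $E$ of $L$), with rows and columns indexed by $\epi(E,L)$ and entries the structure constants $\tau_{\Delta(E)}^{\ltri(\phi),\rtri(\psi)}$. The proof of Theorem~\ref{7.3} is an induction on the minimal group of a seed, analysing the projective cover of $S_{E,W}$ via the round basis; it is where the real work lies, and it replaces your undeveloped standard-module formalism. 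The invertibility of $T_E^L$ under the algebraic-independence hypothesis is then a short calculation (Lemma~\ref{7.4} plus the paragraph closing Section~7): each diagonal entry $T_E^L(\phi,\phi)$ is a \emph{monic} polynomial in the $\ell(q)$ of degree $d=\len(|L|/|E|)$, while every off-diagonal entry has degree strictly less than $d$ (because $\phi\neq\psi$ forces $\ker\phi\neq\ker\psi$, so $\ker\phi\cap\ker\psi$ is strictly smaller). Hence $\det T_E^L$ is itself monic of degree $d\cdot|\epi(E,L)|$, in particular non-zero. This degree-dominance argument is the correct replacement for your Vandermonde speculation.
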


Surprisingly, for arbitrary $\cK$, $\kay$,
$\ell$, it is not hard to classify the
simple $\Lambda$-modules. That is
in contrast to the situation for
$\kay B_\cK$, where a suitable closure
condition has to be imposed on $\cK$
to avoid the ``vanishing problem'' 
discussed in Rognerud \cite{Rog19}.
In Section 3, we shall show that, for
$\Lambda$, the ``vanishing problem''
itself vanishes. That will allow us, in
Theorems \ref{3.5} and \ref{3.6}, to
give two descriptions of a classification
of simple $\Lambda$-modules for
arbitrary $\cK$.

When $\cK$ is finite, $\Lambda$ is
unital. For unital rings, local
semisimplicity is just semisimplicity.
In Section 4, we shall prove the
following implication of that condition.
Given a positive integer $n$, we write
$\Mat_n$ to indicate an algebra of
$n {\times} n$ matrices. Given a
group $R$, we write $\Aut(R)$ to
denote the automorphism group
of $R$ in the category of groups.

\begin{thm}
\label{1.2}
Suppose $\cK$ is finite and
$\Lambda$ is semisimple. Let $E$
run over representatives of the
isomorphism classes of the factor
groups of the elements of $\cK$. Then
we have an algebra isomorphism
$$\Lambda \cong \bigoplus_E
  \Mat_{n_E}(\kay \Aut(E))$$
where $n_E$ is the number of triples
$(G, B, Y)$ such that $\cK \ni G \geq
B \unrhd Y$ and $B/Y \cong E$.
\end{thm}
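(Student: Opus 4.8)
The plan is to read off the Wedderburn--Artin decomposition of $\Lambda$ from the classification of simple $\Lambda$-modules established in Section 3, and then to regroup the resulting matrix blocks by means of Goursat's lemma. Since $\cK$ is finite, $\Lambda$ is a finite-dimensional unital $\kay$-algebra: its identity is $\sum_{R \in \cK} e_R$, where $e_R$ denotes the square basis element indexed by the diagonal subgroup $\Delta(R) \leq R \times R$, and the $e_R$ form a complete set of orthogonal idempotents, so ${}_\Lambda\Lambda = \bigoplus_{R \in \cK} \Lambda e_R$. By hypothesis $\Lambda$ is semisimple, so Wedderburn--Artin gives $\Lambda \cong \prod_{[S]} \Mat_{n(S)}\big(\End_\Lambda(S)^{\mathrm{op}}\big)$, where $[S]$ runs over the isomorphism classes of simple $\Lambda$-modules, $\End_\Lambda(S)$ is a division $\kay$-algebra, and $n(S)$ is the multiplicity of $S$ in ${}_\Lambda\Lambda$. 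It remains to index the simple modules, to identify the division algebras $\End_\Lambda(S)$, and to compute the multiplicities $n(S)$.

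For the first two points I would invoke Theorems \ref{3.5} and \ref{3.6}: up to isomorphism, the simple $\Lambda$-modules are the $S_{E,W}$, with $E$ running over the isomorphism classes of sections $B/Y$ of groups in $\cK$ (equivalently, over the factor groups occurring in the statement) and $W$ over the isomorphism classes of simple $\kay\Aut(E)$-modules, and $\End_\Lambda(S_{E,W}) \cong \End_{\kay\Aut(E)}(W)$. No deformation intervenes in this identification, because the star products among the twisted diagonal subgroups that carry the $\Aut(E)$-action involve no collapsing and hence a trivial cocycle, as one sees from the construction of $\Lambda$ in Section 2. The one additional input needed is the dimension formula
$$\dim_\kay\big(e_R S_{E,W}\big) \;=\; m_R(E)\cdot \dim_\kay W , \qquad m_R(E) := \big|\{\, Y \unlhd B \leq R : B/Y \cong E \,\}\big| ,$$
a Goursat-lemma count on the standard module underlying $S_{E,W}$; if it is not recorded outright in Section 3, it follows at once from the explicit description of $S_{E,W}$ given there.

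For the multiplicities, note that for a simple $S$ and $R \in \cK$ there is an isomorphism $\Hom_\Lambda(\Lambda e_R, S) \cong e_R S$ of (right) $\End_\Lambda(S)$-modules, so $S$ occurs in $\Lambda e_R$ with multiplicity $\dim_{\End_\Lambda(S)}(e_R S)$, and summing over $R$ gives $n(S) = \sum_{R \in \cK} \dim_{\End_\Lambda(S)}(e_R S)$. Writing $d(W) = \dim_\kay W / \dim_\kay \End_{\kay\Aut(E)}(W)$ for the size of the Wedderburn block of $\kay\Aut(E)$ carrying $W$ (the algebra $\kay\Aut(E)$ being semisimple since $\cchar\kay = 0$), the dimension formula gives $\dim_{\End_\Lambda(S_{E,W})}(e_R S_{E,W}) = m_R(E)\,d(W)$, whence $n(S_{E,W}) = d(W)\sum_{R \in \cK} m_R(E) = n_E\, d(W)$ by the definition of $n_E$. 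So the Wedderburn block of $\Lambda$ at $S_{E,W}$ is $\Mat_{n_E d(W)}\big(\End_{\kay\Aut(E)}(W)^{\mathrm{op}}\big)$. Collecting the blocks with a fixed $E$ and applying $\Mat_{n_E}(-)$ to the Wedderburn decomposition $\kay\Aut(E) \cong \bigoplus_{[W]} \Mat_{d(W)}\big(\End_{\kay\Aut(E)}(W)^{\mathrm{op}}\big)$ identifies $\bigoplus_{[W]} \Mat_{n_E d(W)}\big(\End_{\kay\Aut(E)}(W)^{\mathrm{op}}\big)$ with $\Mat_{n_E}\big(\kay\Aut(E)\big)$; summing over the finitely many isomorphism classes $E$ of sections of groups in $\cK$ — equivalently, over those $E$ with $n_E \neq 0$ — yields the asserted isomorphism.

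I expect the main obstacle to be securing the two Section 3 ingredients in exactly the shape used here: the identification $\End_\Lambda(S_{E,W}) \cong \End_{\kay\Aut(E)}(W)$ and, above all, the dimension formula $\dim_\kay(e_R S_{E,W}) = m_R(E)\dim_\kay W$. Granted these, the regrouping of matrix blocks is purely formal; the only bookkeeping nuisance is the consistent treatment of opposite algebras and of left versus right module conventions, which is harmless because a group algebra is anti-isomorphic to itself via $g \mapsto g^{-1}$.
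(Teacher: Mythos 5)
Your overall architecture is the paper's: classify the simples as $S_{E,W}$, identify $\End_\Lambda(S_{E,W})\cong\End_{\kay\Aut(E)}(W)$, compute the multiplicity of $S_{E,W}$ in the regular module as $n_E m_W$ (your $n_E d(W)$), and then regroup the Wedderburn blocks over a fixed $E$ into $\Mat_{n_E}(\kay\Aut(E))$ — the last step is verbatim the paper's Theorem \ref{4.4} plus the closing sentence of Section 4. The endomorphism-ring identification is not in Theorems \ref{3.5}--\ref{3.6} as you suggest, but it is supplied by Lemma \ref{4.3} via the corner-subring argument of Lemma \ref{2.2}, so that ingredient is secured.

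The genuine gap is the dimension formula $\dim_\kay(e_R S_{E,W})=m_R(E)\dim_\kay W$, which you assert ``follows at once from the explicit description of $S_{E,W}$ given there.'' Section 3 gives no such explicit description: $S_{E,W}$ is defined abstractly, by citing the bijection of [BD16, 2.4] for an enlarged object set $\cK'$ closed under subquotients and then restricting along a corner embedding; there is no basis or concrete model from which $\dim_\kay S_{E,W}(R)$ can be read off. This formula (equivalently, the multiplicity count) is precisely where the paper does its work, in Lemmas \ref{4.1} and \ref{4.2}: Lemma \ref{4.1} shows that under semisimplicity the multiplicity of $S_{E,W}$ in a module $M$ equals the multiplicity of $W$ in the ``head'' $\oo{M}(E)=\Res^{\mu_E}(M(E)/\End_\Lambda(E)_<\,M(E))$ — this step itself needs Theorem \ref{3.6}(b),(c) to rule out simples with strictly smaller minimal group contributing to the head — and Lemma \ref{4.2} then computes $\oo{M}(E)$ for $M=\Lambda\,\id_G^\Lambda$ explicitly: the square-basis elements $s^{E,G}_{\Delta(E,1,\phi,Y,B)}$, indexed by subquotients $B/Y\cong E$ of $G$ and isomorphisms $\phi$, survive to the head and carry a free $\Aut(E)$-action, giving $n_E^G$ copies of the regular $\kay\Aut(E)$-module. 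So your instinct that a Goursat-type count is the engine is right, but it has to be run on the projective $\Lambda\,\id_G^\Lambda$ modulo the ideal $\End_\Lambda(E)_<$, not on $S_{E,W}$ itself; as written, your key step has no proof and its claimed source does not exist.
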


In Section 5, to prepare for a deeper
study, we review some results and
techniques from Boltje--Danz
[BD13]. We express the material as
a passage to another $\kay$-basis
for $\Lambda$, the round basis,
which lacks the closure property
of the square basis but instead has
the advantage that products vanish
except under strong conditions.
After applying those techniques
to the general case in Sections 6
and 7, we shall consider, in
Section 8, a particular simple
$\Lambda$-module called the
trivial $\Lambda$-module. We
shall give criteria for the projectivity
of the trivial $\Lambda$-module.
We shall see that, for finite $\cK$,
projectivity of the trivial
$\Lambda$-module is equivalent
to simplicity of the associated
block algebra.

Theorem \ref{9.3} describes an
embedding of $\Gamma$ in
$\Lambda$. That will yield the
following corollary.

\begin{cor}
\label{1.3}
If $\Lambda$ is locally semisimple,
then $\Gamma$ is locally semisimple.
\end{cor}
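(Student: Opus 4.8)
The plan is to deduce the statement from Theorem~\ref{9.3} together with the standard fact that a corner algebra of a semisimple algebra is semisimple; the embedding furnished by Theorem~\ref{9.3} is exactly what exhibits $\Gamma$ as such a corner of $\Lambda$.

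First I would record the local picture. For a finite subset $\cK' \subseteq \cK$, let $\Lambda_{\cK'}$ and $\Gamma_{\cK'}$ be the unital truncations of $\Lambda$ and of $\Gamma$ obtained by restricting the objects to $\cK'$; these are finite-dimensional $\kay$-algebras, $\Lambda$ is the directed union of the $\Lambda_{\cK'}$ and $\Gamma$ is the directed union of the $\Gamma_{\cK'}$, with the evident (non-unital) inclusions as connecting maps. By the meaning of local semisimplicity, $\Lambda$ is locally semisimple if and only if every $\Lambda_{\cK'}$ is semisimple, and likewise $\Gamma$ is locally semisimple if and only if every $\Gamma_{\cK'}$ is semisimple. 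The embedding $\iota \colon \Gamma \hookrightarrow \Lambda$ of Theorem~\ref{9.3} respects these filtrations, so it carries $\Gamma_{\cK'}$ into $\Lambda_{\cK'}$ for each $\cK'$; writing $i_{\cK'} = \iota(1_{\Gamma_{\cK'}})$, an idempotent of $\Lambda_{\cK'}$, Theorem~\ref{9.3} moreover identifies $\Gamma_{\cK'}$ with the corner algebra $i_{\cK'}\,\Lambda_{\cK'}\,i_{\cK'}$.

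Next I would invoke the elementary fact that, for a semisimple $\kay$-algebra $A$ and an idempotent $e \in A$, the corner $eAe$ is again semisimple: one has $\mathrm{rad}(eAe) = e\,\mathrm{rad}(A)\,e = 0$, and a (necessarily Artinian) algebra with zero Jacobson radical is semisimple; alternatively, $eAe$ is anti-isomorphic to the endomorphism algebra of the projective, hence semisimple, left $A$-module $Ae$, and endomorphism algebras of finite-length semisimple modules, as well as their opposites, are semisimple.

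Finally, assuming $\Lambda$ is locally semisimple, each $\Lambda_{\cK'}$ is semisimple, hence each corner $i_{\cK'}\,\Lambda_{\cK'}\,i_{\cK'}$ is semisimple, hence each $\Gamma_{\cK'} \cong i_{\cK'}\,\Lambda_{\cK'}\,i_{\cK'}$ is semisimple, and therefore $\Gamma$ is locally semisimple; for finite $\cK$ this collapses to one application of the corner fact to the unital algebras $\Gamma \cong i\Lambda i$ inside $\Lambda$. Within this argument there is essentially no obstacle --- everything past the bookkeeping is routine ring theory. The one genuinely non-formal ingredient is the corner description of $\iota(\Gamma)$, and in particular the fact that $\iota(\Gamma_{\cK'})$ is the \emph{whole} corner $i_{\cK'}\,\Lambda_{\cK'}\,i_{\cK'}$ rather than merely some subalgebra of it (a subalgebra of a semisimple algebra need not be semisimple); that is where the real work lies, and it is carried out upstream in Theorem~\ref{9.3}.
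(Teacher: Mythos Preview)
Your argument is correct and is essentially the paper's own: the paper observes, immediately after Theorem~\ref{9.3}, that $\Gamma$ is isomorphic to the corner subalgebra $\oo{\kay_\sigma \cS}_\cK$ of $\Lambda$ (the idempotents being the $\sigma_G(e_G)$), and then Corollary~\ref{1.3} follows from the general fact, already set up in Section~2, that local semisimplicity is inherited by corner subrings. Your version is just a more explicit unpacking of that one sentence, including the reduction to finite $\cK'$ via Remark~\ref{2.3} and the verification that $\iota(\Gamma_{\cK'})$ equals the full corner $i_{\cK'}\Lambda_{\cK'}i_{\cK'}$ rather than a proper subalgebra.
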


For one particular case of $\ell$, we
have $\Gamma \cong \kay \cB_\cK$.
A theorem of Serge Bouc, appearing in
\cite[1.1]{Bar08}, asserts that
$\kay \cB_\cK$ is locally semisimple
if and only if every group in $\cK$ is
cyclic. In Section 9, we shall prove
the following generalization of half
of that result.

\begin{thm}
\label{1.4}
If every element of $\cK$ is cyclic, then
the algebra $\Lambda \cong \Gamma$
is locally semisimple.
\end{thm}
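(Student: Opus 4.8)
The plan has two movements: first, identifying $\Gamma$ with $\Lambda$ when $\cK$ consists of cyclic groups, and second, proving $\Lambda$ itself locally semisimple in that situation, for every $\ell$ (so Theorem~\ref{1.1} is not available). Since local semisimplicity is a property of the finite full subcategories, and any finite subfamily of a family of cyclic groups is again a family of cyclic groups, I would reduce at once to $\cK$ finite, so that $\Lambda$ is unital and ``locally semisimple'' reads ``semisimple''. For the first movement, recall that a cyclic group is abelian: whenever $R, S \in \cK$, the conjugation action on $R \times S$ is trivial, so two subgroups of $R \times S$ are conjugate exactly when they are equal. The transitive-biset basis elements indexing $\Gamma$ correspond to conjugacy classes of subgroups of the relevant direct products, whereas the square-basis elements indexing $\Lambda$ correspond to the subgroups themselves; hence the embedding of Theorem~\ref{9.3} has source and target of equal dimension, hom-set by hom-set, and being injective it is an isomorphism $\Gamma \cong \Lambda$. (Failing this, Corollary~\ref{1.3} would still deduce $\Gamma$ locally semisimple from $\Lambda$ locally semisimple; with the isomorphism, the two assertions coincide.) So it remains to show $\Lambda$ semisimple when $\cK$ is finite and every group in $\cK$ is cyclic.

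For that, I would move to the round basis of Section~5 and run the analysis of Sections~6 and~7 through in this degenerate setting, the goal being to present $\Lambda$ in the form $\bigoplus_E \Mat_{n_E}(\kay \Aut(E))$ of Theorem~\ref{1.2}, with $E$ ranging over the isomorphism classes of factor groups of the elements of $\cK$. Two features of the cyclic case are decisive. First, a factor group of a cyclic group is cyclic, so every $E$ occurring is cyclic; then $\Aut(E)$ is a finite abelian group, and $\kay \Aut(E)$, being the group algebra of a finite group over a field of characteristic zero, is semisimple by Maschke's theorem --- indeed, being commutative, it is a finite product of fields. Hence, once the matrix-algebra decomposition is established, semisimplicity of $\Lambda$ follows summand by summand. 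Second, because all the groups in play are abelian, the combinatorial data underlying the round basis --- automorphism groups, the Goursat data of subgroups of direct products, and the orders and indices entering the $\ell$-dependent cocycle --- collapse to their simplest form; one wants to exploit this to prove that the transition matrix between the round and square bases is invertible for every choice of $\ell$, so that the round-basis picture genuinely realizes the claimed decomposition. Pulling this back along $\Gamma \cong \Lambda$ and then reversing the reduction to finite $\cK$ yields that $\Lambda \cong \Gamma$ is locally semisimple; as a consistency check, taking the distinguished $\ell$ with $\Gamma \cong \kay \cB_\cK$ recovers the direction of Bouc's theorem \cite[1.1]{Bar08} that is being generalized.

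The main obstacle, I anticipate, is precisely that last step of the second movement: verifying, uniformly in $\ell$, that the $\ell$-dependent cocycle does not obstruct the block decomposition over products of cyclic groups. One has to trace carefully how the Goursat data attached to subgroups compose under the star product, and check that the orders and indices feeding the cocycle are well-behaved enough --- bearing in mind that Corollary~\ref{6.5} shows the cocycle is genuinely at work, since without a suitable twist the category algebra $\kay \cS_\cK$ is not semisimple outside trivial cases. Once that is done, the reduction to finite $\cK$, the appeal to Maschke's theorem for the blocks, and the transfer across the isomorphism of Theorem~\ref{9.3} are all routine.
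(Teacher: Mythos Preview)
Your first movement is correct and matches the paper: since cyclic groups are abelian, conjugacy classes of subgroups of $F\times G$ coincide with subgroups, and Theorem~\ref{9.3} gives $\Gamma\cong\Lambda$.

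The second movement has a genuine gap. Your plan is to ``run the analysis of Sections~6 and~7'' and realize the decomposition of Theorem~\ref{1.2} directly, but you never isolate the concrete criterion that makes this work. In particular, your remark about proving ``the transition matrix between the round and square bases is invertible for every choice of $\ell$'' is a confusion: that transition matrix is M\"{o}bius inversion on the subgroup poset, is unit upper-triangular, is independent of $\ell$, and is always invertible. That is not where the $\ell$-dependence lives, and invertibility there gives you nothing toward semisimplicity (Corollary~\ref{6.5} already shows the undeformed algebra is not semisimple, despite having the same round basis). Theorem~\ref{1.2} is a \emph{consequence} of semisimplicity, not a route to it; you would need an independent mechanism to produce the block idempotents, and you have not proposed one.

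What the paper actually does is apply Theorem~\ref{7.3}: it suffices to show the matrix $T_E^L$ is invertible for all cyclic $L$ dividing into some $|G|$ with $G\in\cK$ and all cyclic quotients $E$ of $L$. The decisive cyclic-specific fact is that a cyclic group $L$ has a \emph{unique} subgroup $M$ of index $|E|$, so every $\phi,\psi\in\epi(E,L)$ satisfies $\ker(\phi)=M=\ker(\psi)$. Lemma~\ref{7.4} (the rider) then gives $t_{\ltri(\phi)}^{E,L}\,t_{\rtri(\psi)}^{L,E}=\tau\, t_{\Delta(\theta)}^{E,E}$ with $\theta=\phi\psi^{-1}$, a single round-basis element. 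When $\phi=\psi$ this is $\ell(M)\,t_{\Delta(E)}^{E,E}$, contributing $\ell(M)\neq 0$ on the diagonal of $T_E^L$; when $\phi\neq\psi$ the element $t_{\Delta(\theta)}^{E,E}$ has $\theta\neq\id_E$, so the coefficient of $t_{\Delta(E)}^{E,E}$ is zero and the off-diagonal entry vanishes. Thus $T_E^L=\ell(M)\cdot\mathrm{Id}$ is invertible for every $\ell$, and Theorem~\ref{7.3} finishes. Your ``Goursat data collapses'' intuition is pointing in the right direction, but the argument hinges on this uniqueness-of-kernel observation, which you did not name.
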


The aim of this paper is to initiate study
in a speculative direction which, to
yield applications, may require
generalizations. In \cite{BO}, the
$\kay$-linear biset category $\kay \cB$
is replaced by the $\kay$-linear
$C$-fibred biset category $\kay \cB_C$,
where $C$ is a supercyclic group.
Replacement of $\kay \cB$ with the
$p$-permutation category $\kay \cT$,
studied in Ducellier \cite{Duc16}, might
be difficult but interesting, since $p$-blocks
that are $p$-permutation equivalent in
the sense of Boltje-Xu [BX08] correspond
to associate idempotents of small full
subcategories of $\kay \cT$.

One possible application of semisimple
deformations of $\kay \cB$ may be in
the study of those functors $\kay \dash
\mod \leftarrow \kay \cB$ that admit
suitable deformations. The same may
pertain to $\kay \cB_C$ and $\kay \cT$,
if those two categories can be shown to
admit deformations with semisimplicity
properties.

To indicate another possible line of
further study, let us suppose that $\kay$
arises as the field of fractions of a
complete local noetherian ring
$\OO$ whose residue field $\FF$ has
prime characteristic $p$. Given an
algebra $A$ over $\OO$ such that
$A$ is freely and finitely generated
over $\OO$ and the $\kay$-linear
extension of $A$ is semisimple, then
the $\FF$-linear reduction of $A$
admits a theory of decomposition\
numbers and a factorization of the
Cartan matrix. A paradigmatic case
is that where $A$ is the group algebra
of a finite group. Another case, concerning
Mackey categories, is discussed in 
Th\'{e}venaz--Webb [TW95]. It might
seem absurd to suggest that such a
decomposition theory might be applied
in contexts involving $\kay \cB$,
$\kay \cB_C$, $\kay \cT$. After all,
those three categories lack the
prerequisite semisimplicity property.
But the suggestion may cease to seem
absurd when we consider the
possibility of reinstating semisimplicity
by passing to suitable deformations.

\section{Cocycle deformation of the
subgroup category}

After setting up some notation and
terminology, we shall define the algebra
$\Lambda$ mentioned above, and we shall
classify the simple $\Lambda$-modules.

We do not require rings to be unital.
Even when working with unital rings,
we do not require subrings to be
common-unity subrings. Given a ring
$A$, we define a {\bf corner subring}
of $A$ to be a subring $B$ such that
$B \geq BAB$. We call a ring
monomorphism $\nu : A \leftarrow C$
a {\bf corner embedding} provided
$\nu(C)$ is a corner subring. We call
$A$ {\bf locally unital} provided every
finite subset of $A$ is contained in
a unital corner subring of $A$, we
mean, a subring having the form
$e A e$ where $e$ is an idempotent
of $A$. We consider $A$-modules
only when $A$ is locally unital and,
in that case, we require that every
element of an $A$-module is fixed
by an idempotent of $A$. The next
result follows easily from the
special case in Green [Gre07, 6.2g].

\begin{pro}
\label{2.1} {\rm (Green.)} Let $B$ be
a corner subring of a locally unital
ring $A$. Then $B$ is locally unital.
Furthermore, the condition $T \cong
BS$ characterizes a bijective
correspondence $[T] \leftrightarrow [S]$
between the isomorphism classes $[T]$
of simple $B$-modules and the
isomorphism classes of those simple
$A$-modules $S$ satisfying
$BS \neq 0$.
\end{pro}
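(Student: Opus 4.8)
The plan is to reduce everything to the cited special case Green [Gre07, 6.2g], which presumably treats idempotents $e$ in a (not necessarily unital) ring $A$ and establishes the module correspondence between the unital ring $eAe$ and those simple $A$-modules $S$ with $eS \neq 0$, via $T \mapsto eS$ (equivalently $S \mapsto Ae \otimes_{eAe} T$). The first step is to prove that a corner subring $B$ of a locally unital ring $A$ is again locally unital. Given a finite subset $F \subseteq B$, choose an idempotent $e \in A$ with $F \subseteq eAe$; the issue is that $e$ need not lie in $B$. Here I would use the corner condition $B \geq BAB$: the element $f$ obtained by a suitable manipulation of $e$ against elements of $B$ can be arranged to be an idempotent of $B$ fixing $F$. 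Concretely, since each $x \in F$ satisfies $x = exe$, and since $F$ is finite, one can first replace $e$ by an idempotent fixing $F$ on both sides and additionally fixing finitely many auxiliary products; then $f := $ (an appropriate expression like a "Peirce-corrected" idempotent built from $e$ and a local unit for a slightly larger finite subset of $B$) lies in $BAB \subseteq B$ and is idempotent with $fxf = x$ for $x \in F$. The key point is the standard trick: if $e$ is idempotent and $b \in eAe \cap B$-related data, the element $f = b + (1-b)e(1-b)$-style corrections stay inside $B$ because $BAB \subseteq B$ absorbs the middle factor $e$.

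The second step handles the module correspondence. Fix a simple $A$-module $S$ with $BS \neq 0$. Then $BS$ is a $B$-submodule of $S$; I want to show $BS$ is simple over $B$ and that every simple $B$-module arises this way, uniquely up to isomorphism. Pick $0 \neq s \in BS$; by local unitality of $A$ there is an idempotent $e \in A$ with $es = s$, and since $S$ is simple, $S = AeS$. One shows $BS = BeS$ and, using the corner condition, that $Be$ generates $BS$ as a $B$-module in a controlled way; then the $eAe$-module $eS$ is simple (by Green's special case), and $BS$ is obtained from $eS$ by a further corner passage $B \supseteq eBe$-type argument — but more directly, one applies Green's special case twice, or once to $A \supseteq eAe$ and once to $B$, noting $eAe \supseteq e(BAB)e$ and $eBe$ is a corner subring of $eAe$ lying inside $B$. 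The cleanest route: reduce to the unital case by choosing a single idempotent $e$ that is simultaneously a local unit for the relevant finite data in both $A$ and $B$ (possible by Step 1 applied to a finite subset of $B$ together with elements of $A$ witnessing $BS \neq 0$), so that $eBe = Be = eB$ and $eAe$ are unital rings with $eBe$ a corner (idempotent-cut) subring of $eAe$; then Proposition 2.1 for unital rings is exactly Green's statement, and the simple $A$-modules $S$ with $BS \neq 0$ correspond to those with $eS \neq 0$ while $BS \leftrightarrow eS|_{eBe}$.

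The main obstacle I expect is Step 1, the passage from an idempotent local unit $e \in A$ fixing a finite subset of $B$ to an idempotent local unit \emph{inside} $B$ fixing the same set: the corner condition $B \geq BAB$ is weaker than $e \in B$, and one must exploit it carefully, typically by an argument along the lines of "enlarge the finite set, then the idempotent constructed from products of elements of $B$ automatically lands in $BAB$." Once local unitality of $B$ is in hand, Step 2 is essentially bookkeeping: everything localizes at a common idempotent and the statement becomes the unital case, which is Green's result. I would also remark that the bijectivity (injectivity of $[S] \mapsto [BS]$ and surjectivity) follows formally from the unital case together with the observation that $S$ is recovered from $BS$ up to isomorphism as the unique simple $A$-module $S'$ with $BS' \cong BS \neq 0$, because two simple $A$-modules with isomorphic nonzero $e$-fixed parts are isomorphic.
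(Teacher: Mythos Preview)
The paper does not actually prove this proposition: the sentence introducing it says only that the result ``follows easily from the special case in Green [Gre07, 6.2g]'', which is the case $B=eAe$ for a single idempotent $e$. Your plan to reduce to that special case is therefore exactly the paper's implicit strategy, and you have correctly located Step~1 (local unitality of $B$) as the only nontrivial point.

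That gap is genuine, and in fact it cannot be filled at the stated level of generality: the first assertion of the proposition is false. Take $A=\Mat_2(\kay)$ and $B=\kay\,e_{12}$. Since $e_{12}\,a\,e_{12}=a_{21}e_{12}$ for $a=(a_{ij})\in A$, we have $BAB=B$, so $B$ is a corner subring of the unital ring $A$; but $B^2=0$, so $B$ contains no nonzero idempotent and is not locally unital. The module correspondence also fails here: the unique simple $A$-module $S=\kay^2$ has $BS=\kay e_1\neq 0$, yet $B(BS)=0$, so $BS$ is not a $B$-module under the paper's convention. Hence your placeholder ``Peirce-corrected idempotent'' in Step~1 cannot be made to work in general. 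What saves the paper is that in every application $B$ is the algebra of a full $\kay$-linear subcategory, i.e.\ $B=\bigoplus_{i,j\in\cO}\id_i\,A\,\id_j$ for a set $\cO$ of orthogonal idempotents; there $B$ is visibly locally unital (any finite subset lies in $fBf$ with $f=\id_{i_1}+\cdots+\id_{i_n}\in B$), and your Step~2 then goes through cleanly: choose a common idempotent $f\in B$, note $fBf=fAf$ is a unital corner of both, and apply Green's $eAe$ statement. So the correct fix is to add that hypothesis on $B$ (or simply assume $B$ locally unital) and discard the attempted Step~1 construction.
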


For any property $\frakP$ of unital
rings such that $\frakP$ is closed
under passage to corner subrings,
a ring $A$ is said to be {\bf locally
$\frakP$} provided $A$ is locally unital
and $\frakP$ holds for every unital
corner subring of $A$. We shall be
especially concerned with the
condition of local semisimplicity.
(The common practice of using
{\it semisimple} to mean {\it locally
semisimple} may be harmless,
since it does not change the
meaning of {\it semisimple} in the
established context of unital rings.
Nevertheless, we adopt the longer
term because it carries a cautionary
reminder of the generality.) Before
we depart from abstract ring theory,
let us record a lemma for later use.

\begin{lem}
\label{2.2}
Let $A$ be a locally artinian
ring. Let $B$ be a corner subring of
$A$. Let $S$ be a simple $A$-module.
Define $T = BS$, which is a simple
$B$-module by the above proposition.
Then $B$ is locally artinian and we have
an isomorphism of division rings
$\End_B(T) \cong \End_A(S)$.
\end{lem}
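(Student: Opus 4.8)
The plan is to reduce everything to the unital case by choosing a single idempotent that simultaneously "sees" $S$, and then invoke Green's correspondence (Proposition~\ref{2.1}) together with standard unital Morita-type facts. First I would pick an element $0 \neq s \in S$; by the standing hypothesis on modules over locally unital rings, $s$ is fixed by an idempotent of $A$, and since $A$ is locally unital we can enlarge that idempotent (absorbing it into a larger one) to an idempotent $e$ such that $eAe$ is a unital corner subring with $es = s$, hence $eS \neq 0$. Replacing $A$ by $eAe$, $S$ by $eS$ — a simple $eAe$-module by Green — and $B$ by $eBe$, and noting $\End_{eAe}(eS) \cong \End_A(S)$ by the local-unitality bijection on simples (the endomorphism ring is computed inside the corner), I am reduced to the case where $A$ is unital artinian, $B = eAe$ is a unital corner subring for an idempotent $e \in A$ (possibly $e \neq 1$), $S$ is a simple $A$-module, $T = eS = BS$ is a simple $B$-module, and I must show $B$ is artinian and $\End_B(T) \cong \End_A(S)$.

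For the reduced statement, $B = eAe$ being artinian when $A$ is artinian is classical: $eAe$-submodules of $eAe$ are of the form $eI$ for left ideals $I$ of $A$ lying in $Ae$, and a descending chain pulls back to a descending chain in $A$. The division-ring isomorphism is the substantive point. The map $\End_A(S) \to \End_B(T)$ sending $\varphi \mapsto \varphi|_{eS}$ (which lands in $\End_{eAe}(eS)$ since $\varphi$ is $A$-linear hence commutes with multiplication by $e$) is a ring homomorphism; I must show it is bijective. Injectivity: if $\varphi$ kills $eS$ then $\ker\varphi$ is a submodule of the simple module $S$ containing $eS \neq 0$, so $\ker\varphi = S$ and $\varphi = 0$. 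Surjectivity: given $\psi \in \End_{eAe}(eS)$, I would extend it by defining, for $x = \sum a_i s_i$ with $a_i \in A$, $s_i \in eS$, the value $\widetilde\psi(x) = \sum a_i \psi(s_i)$; one checks this is well-defined and $A$-linear using that $AeS = S$ (since $AeS$ is a nonzero submodule of $S$) and that relations $\sum a_i s_i = 0$ with $s_i \in eS$ are consequences of the $eAe$-module structure on $eS$ — concretely, pair with $e$ on the left and use $s_i = e s_i$.

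The main obstacle I anticipate is the well-definedness of the extension $\widetilde\psi$: one needs that any $A$-linear relation among elements of $eS$ is, after multiplying through appropriately, already an $eAe$-linear relation, so that $\psi$ respects it. The clean way to handle this is not to extend elementwise at all, but instead to use the standard equivalence between $\Mod(A)$-with-$eS\neq 0$ and $\Mod(eAe)$ given by $S \mapsto eS$ and $T \mapsto Ae \otimes_{eAe} T$, under which $S$ corresponds to $T$ and which is fully faithful on the relevant subcategory; full faithfulness immediately gives $\Hom_A(S,S) \cong \Hom_{eAe}(eS,eS)$ as rings. Alternatively, and perhaps most economically, one observes that $\End_A(S)^{\mathrm{op}} \cong \End_{D}(S)$ where... — but in fact the slickest route is: both $\End_A(S)$ and $\End_B(T)$ are division rings by Schur (using local artinianness, which guarantees simple modules have finite length in their unital corners and hence Schur applies), the restriction map between them is a nonzero — because the identity maps to the identity — homomorphism of division rings, and any nonzero homomorphism of division rings is injective; surjectivity then follows by symmetry from considering the analogous construction, or directly from Green's correspondence identifying the categories of modules. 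I would present the argument via this division-ring observation, keeping the explicit extension only as a remark, since it makes the proof short and avoids the bookkeeping that is the one genuinely fiddly point.
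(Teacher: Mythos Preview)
Your proof has a genuine gap at the surjectivity step. The ``slickest route'' you settle on --- that a nonzero homomorphism of division rings is injective, and that surjectivity then follows ``by symmetry'' or ``from Green's correspondence'' --- does not work. There is no symmetry to invoke: $B$ is a corner of $A$, not conversely, and there is no natural map $\End_B(T)\to\End_A(S)$ to play the role of the reverse restriction. Proposition~\ref{2.1} (Green) gives only a bijection on isomorphism classes of simples, not a fully faithful functor, so it does not by itself identify endomorphism rings. Your explicit extension $\widetilde\psi$ \emph{can} be made to work, but you correctly flagged well-definedness as the fiddly point and then did not resolve it; it needs the fact that $Ae\otimes_{eAe} eS$ has a unique maximal submodule (so that $1\otimes\psi$ preserves it), which is a genuine extra step. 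There is also a slip in your reduction: after replacing $A$ by $eAe$, the new $B$ is $eBe$, which is a corner subring of $eAe$ but need not itself have the form $fAf$ for any idempotent $f$; your sentence ``$B=eAe$ is a unital corner subring for an idempotent $e\in A$'' conflates two different things.

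The paper sidesteps all of this with a short and quite different argument. Pick a \emph{primitive} idempotent $i$ of $B$ with $iT\neq 0$. The corner condition $BAB\subseteq B$ gives $iAi\subseteq BAB\subseteq B$, hence $iAi=iBi$; in particular $i$ is primitive in $A$ as well, and $iS\neq 0$. Then $Bi$ and $Ai$ are the projective covers of $T$ and $S$, so $\End_B(T)\cong iBi/J(iBi)$ and $\End_A(S)\cong iAi/J(iAi)$, and these coincide because $iBi=iAi$. This one-line equality $iBi=iAi$ is the key idea you are missing; it replaces the whole extension-of-$\psi$ business.
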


\begin{proof}
Plainly, $B$ is locally artinian.
The specified action plainly yields
an embedding of division rings
$\nu : \End_B(T) \leftarrow
\End_A(S)$. Let $i$ be a primitive
idempotent of $B$ such that
$iT \neq 0$. Then $Bi$ and $Ai$
are projective covers of $T$ and
$S$, respectively. So $\End_B(T)
\cong iBi/J(iBi)$ and $\End_A(S)
\cong iAi/J(iAi)$. But $iBi = iAi$.
\end{proof}

We understand an {\bf algebra} over
$\kay$ to be a ring equipped with a
compatible $\kay$-module structure.
That is to say, algebras over $\kay$
are to be associative but not necesarily
unital.

We deem all categories to be locally small.
Given objects $X$ and $Y$ of a category
$\cC$, we write $\cC(X, Y)$ to denote the
set of $\cC$-morphisms $X \leftarrow Y$.
It is to be understood that a morphism
determines its domain and codomain, in
other words, the morphism sets $\cC(X, Y)$
are mutually disjoint. We write $\id_X^\cC$
or $\id_X$ to denote the identity
$\cC$-morphism of $X$. When $\cC$ is
small, we write $\mor(\cC)$ for the set of
morphisms of $\cC$, we write $\obj(\cC)$
for the set of objects of $\cC$, and we call
$\cC$ a category on $\obj(\cC)$. For
arbitrary $\cC$ and a set $\cO$ of objects
of $\cC$, we write $\cC_\cO$ for the full
subcategory of $\cC$ on $\cO$.

Recall, a category is said to be
{\bf $\kay$-linear} when the morphism
sets are $\kay$-modules and the
composition maps are $\kay$-bilinear.
When $\cC$ is small and $\kay$-linear,
we define the {\bf algebra
associated with $\cC$} to be the algebra
$$\cC_{\rm alg} = \bigoplus_{X, Y \in
  \obj(\cC)} \cC(X, Y)$$
with multiplication given by composition,
the product of two incompatible morphisms
being zero. Systematically, in this paper,
we shall employ the language of category
theory when working with $\kay$-linear
categories that are possibly large, but
we shall shift to the richer language of
ring theory when working with small
$\kay$-linear categories. When $\cC$
is small and $\kay$-linear, all the
features of $\cC$ can be recovered
from the algebra $\cC_{\rm alg}$
together with the complete family of
mutually orthogonal idempotents
$(\id_X^\cC : X \in \obj(\cC))$. For
instance, the morphism $\kay$-modules
can be recovered from the equality
$\cC(X, Y) = \id_X^\cC . \cC .
\id_Y^\cC$. We shall write $\cC$
instead of $\cC_{\rm alg}$, relying on
context to resolve any ambiguity. To
diminish or eliminate even any potential
for ambiguity, we shall work freely with
the following alternative definition which,
at least in the context of our ring theoretic
approach, is equivalent to the definition
above: a small $\kay$-linear category is
an algebra $\cC$ over $\kay$ equipped
with a complete family of mutually
orthogonal idempotents whose indexing
set, denoted $\obj(\cC)$, is called the
set of objects of $\cC$.

For small $\cC$, given a subset
$\cO \subseteq \obj(\cC)$, then
$\cC_\cO$ is a corner subalgebra of
$\cC$. Note that $\cC_\cO$ is unital
if and only if $\cO$ is finite. Proof of the
next remark is easy.

\begin{rem}
\label{2.3}
Let $\cC$ be a small $\kay$-linear
category. Then the following three
conditions are equivalent: $\cC$ is
locally semisimple; every full subcategory
of $\cC$ is locally semisimple; $\cC_\cO$
is semisimple for every finite set $\cO$
of objects of $\cC$.
\end{rem}

Given $\cC$ as in the remark, a
$\cC$-module $M$ and $X \in \obj(\cC)$,
we define $M(X) = \id_X . M$, which we
regard as a module of the endomorphism
algebra $\End_\cC(X) = \cC(X, X) =
\id_X^\cC . \cC . \id_X^\cC$. We mention
that, in a well-known manner, $M$ can be
viewed as a functor to the category of
$\kay$-modules, and $M(X)$ can be
regarded as the evaluation at $X$. But we
shall not be making use of that
interpretation.

As another preliminary, let us say a few
words on category algebras and twisted
category algebras. The following
constructions are already discussed in
Linckelmann \cite{Lin04}, so our coverage
is brief. Let $\cC$ be any category. We
define, as follows, a $\kay$-linear
category $\kay \cC$ called the
{\bf $\kay$-linearization} of $\cC$.
The objects of $\kay \cC$ are the objects
of $\cC$. Given objects $X$ and $Y$, then
$\kay \cC(X, Y)$ is the $\kay$-module
freely generated by $\cC(X, Y)$. The
composition for $\kay \cC$ is obtained
from the composition for $\cC$ by
$\kay$-linear extension. We have
$\id_X^{\kay \cC} = \id_X^\cC$. When
$\cC$ is small, the $\kay$-linearization
$\kay \cC$ is small, and we can pass to
the algebra $\kay \cC$, which we call the
{\bf category algebra} of $\cC$ over
$\kay$. As an equivalent definition, for
small $\cC$, the algebra $\kay \cC$ is
the algebra over $\kay$ such that
$\kay \cC$ has $\kay$-basis $\mor(\cC)$
and the multiplication operation on
$\kay \cC$ is the $\kay$-linear extension
of the composition operation, again with
the product of two incompatible
morphisms taken to be zero.

For any category $\cC$, a {\bf cocycle}
for  $\cC$ over $\kay$ is defined to be a
formal family of functions
$$\gamma_{X, Y, Z} \: : \: \kay^\times
  \leftarrow \cC(X, Y) \times \cC(Y, Z)$$
indexed by triples of objects $X$, $Y$, $Z$
of $\cC$, such that, dropping the subscripts,
$$\gamma(\theta, \phi)
  \gamma(\theta \comp \phi, \psi) =
  \gamma(\theta, \phi \comp \psi)
  \gamma(\phi, \psi)$$
for all $\cC$-morphisms $\theta$,
$\phi$, $\psi$ with $\theta \comp
\phi \comp \psi$ defined. We define the
{\bf twisted category} associated with
$\gamma$ to be the $\kay$-linear
category $\kay_\gamma \cC$ such that
$\kay_\gamma \cC = \kay \cC$ as
$\kay$-modules and the composition
${}_{\circ \gamma}$ satisfies
$$\phi \, {}_{\circ \gamma} \, \psi
  = \gamma(\phi, \psi) \,
  \phi \comp \psi$$
for all $\cC$-morphisms $\phi$ and $\psi$
such that $\phi \comp \psi$ is defined.
The associativity of the composition is clear.
It is easy to check that the identity
$\kay_\gamma \cC$- morphism on $X$ is
$\gamma(\id_X, \id_X)^{-1} \id_X^\cC$.
When $\cC$ is small, the algebra
$\kay_\gamma \cC$ is called the
{\bf twisted category algebra} associated
with $\gamma$.

We now turn to the subgroup category
$\cS$. For any group $R$, we write
$\cS(R)$ to denote the set of subgroups
of $R$. Goursat's Theorem, well-known
and easy to prove, provides a classification
of the subgroups of a direct product of
two groups.

\begin{pro}
\label{2.4} {\rm (Goursat's Theorem.)}
Let $R$ and $S$ be groups. Then
the condition
$$U = \{ x {}_\bullet U \times y U_\bullet
  : x \times y \in
  {}^\bullet U \times U^\bullet,
  x {}_\bullet U = \theta_U(y U_\bullet) \}$$
characterizes a bijective correspondence
$U \leftrightarrow ({}^\bullet U, {}_\bullet U,
\theta_U, U_\bullet, U^\bullet)$ between
the subgroups $U$ of $R {\times} S$ and
the quintuples $({}^\bullet U, {}_\bullet U,
\theta_U, U_\bullet, U^\bullet)$ such that
$R \geq {}^\bullet U \unrhd {}_\bullet U$
and $U_\bullet \unlhd U^\bullet \leq S$
and $\theta_U$ is an isomorphism
${}^\bullet U / {}_\bullet U \leftarrow
U^\bullet / U_\bullet$.
\end{pro}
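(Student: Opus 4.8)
The plan is the classical proof of Goursat's lemma, organized via the two coordinate projections $\pi_R : R {\times} S \to R$ and $\pi_S : R {\times} S \to S$. Given a subgroup $U \leq R {\times} S$, I would put ${}^\bullet U = \pi_R(U)$ and $U^\bullet = \pi_S(U)$, and put ${}_\bullet U = \{ r \in R : r {\times} 1 \in U \}$ and $U_\bullet = \{ s \in S : 1 {\times} s \in U \}$; these four are visibly subgroups, with ${}_\bullet U \leq {}^\bullet U$ and $U_\bullet \leq U^\bullet$. For the normality claims, given $r \in {}^\bullet U$ choose $s$ with $r {\times} s \in U$; then for $t \in {}_\bullet U$ one has $(r {\times} s)(t {\times} 1)(r {\times} s)^{-1} = r t r^{-1} {\times} 1 \in U$, so ${}^\bullet U$ normalizes ${}_\bullet U$, and symmetrically $U^\bullet$ normalizes $U_\bullet$; hence ${}_\bullet U \unlhd {}^\bullet U$ and $U_\bullet \unlhd U^\bullet$.

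Next I would define $\theta_U : {}^\bullet U / {}_\bullet U \leftarrow U^\bullet / U_\bullet$ by setting $\theta_U(s U_\bullet) = r \,{}_\bullet U$ whenever $r {\times} s \in U$; the substance of this step is well-definedness. If $r {\times} s$ and $r' {\times} s$ both lie in $U$, then $r r'^{-1} {\times} 1 \in U$, so $r \,{}_\bullet U = r' \,{}_\bullet U$; and replacing $s$ by $su$ with $u \in U_\bullet$ replaces $r {\times} s \in U$ by $(r {\times} s)(1 {\times} u) = r {\times} su \in U$, leaving the value unchanged. That $\theta_U$ is a homomorphism is immediate from closure of $U$ under multiplication; it is surjective because each $r \in {}^\bullet U$ occurs in some $r {\times} s \in U$, and injective because $\theta_U(sU_\bullet) = {}_\bullet U$ forces $r {\times} s \in U$ with $r \in {}_\bullet U$, whence $1 {\times} s = (r {\times} 1)^{-1}(r {\times} s) \in U$, i.e.\ $s \in U_\bullet$. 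The displayed formula then recovers $U$: the inclusion $\subseteq$ is the definition of $\theta_U$, and given $r {\times} s$ with $r \in {}^\bullet U$, $s \in U^\bullet$ and $r \,{}_\bullet U = \theta_U(sU_\bullet)$, picking $r' {\times} s \in U$ gives $r'^{-1}r \in {}_\bullet U$, hence $(r'^{-1}r {\times} 1) \in U$ and $r {\times} s = (r' {\times} s)(r'^{-1}r {\times} 1) \in U$. (This is exactly the assertion that $U$ is the union of the products $x\,{}_\bullet U \times y U_\bullet$ over pairs of cosets matched by $\theta_U$, as written in the statement.)

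For the reverse direction, starting from a quintuple $(A, B, \theta, D, C)$ with $R \geq A \unrhd B$, $D \unlhd C \leq S$ and $\theta : A/B \leftarrow C/D$ an isomorphism, I would set $U := \{ r {\times} s : r \in A,\ s \in C,\ rB = \theta(sD) \}$; closure under multiplication and inversion, and containment of $1 {\times} 1$, follow from $\theta$ being a homomorphism, so $U \leq R {\times} S$. One then checks that the five invariants of this $U$ return $(A, B, \theta, D, C)$: surjectivity of $\theta$ gives ${}^\bullet U = A$ and $U^\bullet = C$; injectivity of $\theta$ gives ${}_\bullet U = \{ r \in A : rB = \theta(D) = B \} = B$ and likewise $U_\bullet = D$; and $\theta_U = \theta$ by construction. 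Combined with the first part, which shows that passing from $U$ to its quintuple and back returns $U$, this makes the two assignments mutually inverse bijections. I do not anticipate a genuine obstacle here: the argument is elementary group theory, and the only points demanding care are the well-definedness of $\theta_U$ and the bookkeeping confirming that the two round trips are the identity.
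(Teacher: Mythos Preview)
Your argument is correct and is the standard proof of Goursat's Theorem via the two coordinate projections; the well-definedness of $\theta_U$, the normality checks, and the two round trips are all handled properly. The paper itself does not supply a proof---it states the result as ``well-known and easy to prove'' and moves on---so there is nothing to compare against, but what you have written is exactly the elementary verification one would expect.
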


Of course, the five parameters
${}^\bullet U$, ${}_\bullet U$, $\theta_U$,
$U_\bullet$, $U^\bullet$ depend on $R$
and $S$ as well as $U$. When we
apply the proposition to elements $U$ of
the set $\cS(R, S) = \cS(R {\times} S)$, we
shall usually be understanding the
codomain $R$ and the domain $S$ to
be implicit in the specification of $U$.
To guarantee disjointness of morphism
sets, a prerequisite condition in the
above definition of a category algebra,
we can understand the elements of
$\cS(R, S)$ to be triples having the form
$(R, U, S)$ where $U \leq R {\times} S$.
But let us not include that in our
notation. In the scenario of the
proposition, we write
$$U = \Delta({}^\bullet U, {}_\bullet U,
\theta_U, U_\bullet, U^\bullet) \; .$$
We abridge the notation in some
special cases, as follows. Let $A \leq R$
and $B \leq S$. Given an
isomorphism $\theta : A \leftarrow B$,
we write $\Delta(\theta) = \Delta(A, 1,
\theta, 1, B)$, which makes sense upon
identifying $A$ with $A/1$ and similarly
for $B$. Given $R \geq C \leq S$, we
write $\Delta(C) = \Delta(\id_C) =
\{ (c, c) : c \in C \}$. In particular, the
identity $\cS$-morphism on $R$ is
$\id_R^\cS = \Delta(R)$.

The following description of the star
product, though familiar to experts on
bisets, is worth briefly reviewing in the
notation that we shall be using. For
another account of the star product,
presented in the context of bisets, see
Bouc [Bou10, Chapter 2]. Let $R$, $S$,
$T$ be groups. Let $U \in \cS(R, S)$ and
$V = \cS(S, T)$. Write $W = U * V$. A
straightforward application of Zassenhaus'
Butterfly Lemma yields isomorphisms
$$\frac{{}^\bullet W}{{}_\bullet W} \cong
  \frac{(U {}^\bullet \cap {}^\bullet V)
  U {}_\bullet}{(U {}^\bullet \cap
  {}_\bullet V) U {}_\bullet} \cong
  \frac{U {}^\bullet \cap {}^\bullet V}
  {(U {}^\bullet \cap {}_\bullet V)
  (U {}_\bullet \cap {}^\bullet V)} \cong
  \frac{(U {}^\bullet \cap
  {}^\bullet V) {}_\bullet V}{(U {}_\bullet
  \cap {}^\bullet V) {}_\bullet V} \cong
  \frac{W {}^\bullet}{W {}_\bullet} \; .$$
The isomorphisms are the canonical
isomorphisms expressed in the
following variant of the diagram that
gives the Butterfly Lemma its name.
The four horizontal lines on the left
indicate how, via $\phi$, four subgroups
of ${}^\bullet U$ containing ${}_\bullet U$
correspond to four subgroups of
$U^\bullet$ containing $U_\bullet$. A
similar comment applies to the four
horizontal lines on the right, with
$\psi$ in place of $\phi$.

\smallskip
\noin \hspace{0.35in}
\begin{picture}(420,142)

\put(13,5){${}_\bullet U$}
\put(12,45){${}_\bullet W$}
\put(12,85){${}^\bullet W$}
\put(13,125){${}^\bullet U$}

\put(87,5){$U_\bullet$}
\put(64,45){$(U^\bullet \cap
  {}_\bullet V) U_\bullet$}
\put(64,85){$(U^\bullet \cap
  {}^\bullet V) U_\bullet$}
\put(85,125){$U^\bullet$}

\put(153,25){$(U^\bullet \cap {}_\bullet V)
  (U_\bullet \cap {}^\bullet V)$}
\put(181,65){$U^\bullet \cap {}^\bullet V$}

\put(297,5){${}_\bullet V$}
\put(279,45){$(U_\bullet \cap
  {}^\bullet V) {}_\bullet V$}
\put(279,85){$(U^\bullet \cap
  {}^\bullet V){}_\bullet V$}
\put(297,125){${}^\bullet V$}

\put(372,5){$V_\bullet$}
\put(370,45){$W_\bullet$}
\put(370,85){$W^\bullet$}
\put(372,125){$V^\bullet$}


\put(20,19){\line(0,1){20}}
\put(20,59){\line(0,1){20}}
\put(20,99){\line(0,1){20}}

\put(90,19){\line(0,1){20}}
\put(90,59){\line(0,1){20}}
\put(90,99){\line(0,1){20}}

\put(200,39){\line(0,1){20}}

\put(305,19){\line(0,1){20}}
\put(305,59){\line(0,1){20}}
\put(305,99){\line(0,1){20}}

\put(375,19){\line(0,1){20}}
\put(375,59){\line(0,1){20}}
\put(375,99){\line(0,1){20}}

\put(77,9){\line(-1,0){44}}
\put(56,49){\line(-1,0){21}}
\put(56,89){\line(-1,0){21}}
\put(77,129){\line(-1,0){44}}

\put(150,33){\line(-2,1){20}}
\put(173,73){\line(-4,1){42}}
\put(251,33){\line(2,1){20}}
\put(229,73){\line(4,1){42}}

\put(364,9){\line(-1,0){46}}
\put(362,49){\line(-1,0){17}}
\put(362,89){\line(-1,0){17}}
\put(364,129){\line(-1,0){46}}

\end{picture}

\smallskip
The next proposition repeats the above
description of $W$ in a more explicit way.

\begin{pro}
\label{2.5}
With the notation above, write
$M = U^\bullet\cap {}^\bullet V$ and
$N = (U^\bullet \cap {}_\bullet V)
(U_\bullet \cap {}^\bullet V)$.
Let $\oo{\phi} : {}^\bullet W / {}_\bullet W
\leftarrow M/N$ and $\oo{\psi} : M/N
\leftarrow W^\bullet / W_\bullet$ be the
isomorphisms induced by $\phi$ and
$\psi$, respectively. Then $\theta_W
= \oo{\phi} \comp \oo{\psi}$.
\end{pro}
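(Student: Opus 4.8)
The plan is to establish the equality $\theta_W = \oo{\phi} \comp \oo{\psi}$ of isomorphisms ${}^\bullet W / {}_\bullet W \leftarrow W^\bullet / W_\bullet$ by evaluating both sides on an arbitrary coset $t W_\bullet$ and comparing with the description of $\theta_W$ furnished by Goursat's Theorem. The only inputs are the definition of the star product and three applications of Proposition \ref{2.4} --- to $U$, to $V$, and to $W$ --- together with the Zassenhaus isomorphisms already recorded in the butterfly diagram; the whole argument is elementary bookkeeping.

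First I would translate the membership condition for $W$. By the definition of $W = U * V$ and Proposition \ref{2.4} applied to $U$ and to $V$, for $r \in R$ and $t \in T$ one has $r \times t \in W$ if and only if $r \in {}^\bullet U$, $t \in V^\bullet$, and there exists $s$ with $r \times s \in U$ and $s \times t \in V$; such an $s$ necessarily lies in $M = U^\bullet \cap {}^\bullet V$, and the conditions on it read $\theta_U(s U_\bullet) = r {}_\bullet U$ and $\theta_V(t V_\bullet) = s {}_\bullet V$. Putting $t = 1$, respectively $r = 1$, reads off ${}_\bullet W / {}_\bullet U = \theta_U\big( (U^\bullet \cap {}_\bullet V) U_\bullet / U_\bullet \big)$ and $W_\bullet / V_\bullet = \theta_V^{-1}\big( (U_\bullet \cap {}^\bullet V) {}_\bullet V / {}_\bullet V \big)$, and passing to the projections of $W$ onto $R$ and onto $T$ gives ${}^\bullet W / {}_\bullet U = \theta_U( M U_\bullet / U_\bullet )$ and $W^\bullet / V_\bullet = \theta_V^{-1}( M {}_\bullet V / {}_\bullet V )$. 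In particular ${}_\bullet U \unlhd {}_\bullet W \unlhd {}^\bullet W \leq {}^\bullet U$, so $\theta_U$ induces an isomorphism onto ${}^\bullet W / {}_\bullet W$ from $(M U_\bullet / U_\bullet) \big/ \big( (U^\bullet \cap {}_\bullet V) U_\bullet / U_\bullet \big)$, and symmetrically for $\theta_V$; this recovers the outer parts of the butterfly diagram.

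Next I would pin down what ``the isomorphism induced by $\phi$'' means in terms of the Zassenhaus isomorphisms in the middle of the diagram. Since $N U_\bullet = (U^\bullet \cap {}_\bullet V) U_\bullet$ and, by the modular law, $M \cap N U_\bullet = N$ (using $M \cap U_\bullet = U_\bullet \cap {}^\bullet V \leq N$), the canonical isomorphism $M/N \cong M U_\bullet / N U_\bullet$ composed with the isomorphism of the previous paragraph is precisely $\oo{\phi} : {}^\bullet W / {}_\bullet W \leftarrow M/N$; unwinding this, $\oo{\phi}(s N) = r {}_\bullet W$ whenever $s \in M$ and $\theta_U(s U_\bullet) = r {}_\bullet U$. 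Symmetrically, with $N {}_\bullet V = (U_\bullet \cap {}^\bullet V) {}_\bullet V$ and $M \cap N {}_\bullet V = N$ (using $M \cap {}_\bullet V = U^\bullet \cap {}_\bullet V \leq N$), one gets $\oo{\psi}(t W_\bullet) = s N$ whenever $s \in M$ and $\theta_V(t V_\bullet) = s {}_\bullet V$. The same two inclusions show these recipes are independent of the chosen representative $s$, so that $\oo{\phi}$ and $\oo{\psi}$ are well defined, and the description of $W^\bullet$ above guarantees that, for $t \in W^\bullet$, the coset $\theta_V(t V_\bullet)$ does meet $M$.

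Finally I would put the two formulas together. Given $t \in W^\bullet$, the definition of $W = U * V$ produces $r \in {}^\bullet W$ and $s \in M$ with $r \times s \in U$ and $s \times t \in V$, hence $\theta_U(s U_\bullet) = r {}_\bullet U$ and $\theta_V(t V_\bullet) = s {}_\bullet V$; then $\oo{\psi}(t W_\bullet) = s N$ and $\oo{\phi}(s N) = r {}_\bullet W$, so $(\oo{\phi} \comp \oo{\psi})(t W_\bullet) = r {}_\bullet W$. On the other hand $r \times t \in W$ with $r \in {}^\bullet W$ and $t \in W^\bullet$, so Proposition \ref{2.4} applied to $W$ gives $\theta_W(t W_\bullet) = r {}_\bullet W$. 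As $t W_\bullet$ was arbitrary, $\theta_W = \oo{\phi} \comp \oo{\psi}$. The step I expect to require the most care is the third paragraph: correctly matching the phrase ``induced by $\phi$'' with the composite of a Zassenhaus isomorphism and the appropriate restriction--corestriction of $\theta_U$, and checking the attendant well-definedness. Once that dictionary is fixed, the conclusion is immediate.
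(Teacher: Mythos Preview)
Your proposal is correct and follows essentially the same approach as the paper: the paper does not give a separate proof of Proposition~\ref{2.5} at all, instead presenting it as ``repeat[ing] the above description of $W$ in a more explicit way,'' referring to the preceding butterfly-diagram discussion where the Zassenhaus isomorphisms and the role of $\phi=\theta_U$ and $\psi=\theta_V$ are laid out. Your argument simply unpacks that discussion by chasing elements through the Goursat parameters of $U$, $V$, and $W$, which is exactly the content the paper leaves implicit; in particular your modular-law verifications that $M\cap NU_\bullet=N$ and $M\cap N{}_\bullet V=N$ make rigorous the two middle Zassenhaus identifications in the diagram.
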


For the sake of conciseness later, let
us make a pedantic distinction. We
understand a {\bf subquotient} of $R$ to
be a group having the form $M/N$,
constructed in the standard way, where
$R \geq M \unrhd N$. Note that $M/N$
determines the pair $(M, N)$, indeed,
$M$ is the unionset of $M/N$ while
$N$ is the identity element. We write
$[R]$ for the isomorphism class of $R$.
We call $S$ a {\bf factor group} of $R$
provided $S$ is isomorphic to a
subquotient of $R$. In that case, we
write $[S] \leq [R]$. Thus, we impose
a formal partial ordering $\leq$ on
the isomorphism classes of groups.

For any $U \in \cS(R, S)$, we define the
{\bf thorax} of $U$ to be the group
$\Theta(U)$, well-defined up to
isomorphism, such that
$${}^\bullet U / {}_\bullet U
  \cong \Theta(U) \cong
  U {}^\bullet / U {}_\bullet \; .$$
The latest proposition immediately yields
the following three corollaries.

\begin{cor}
\label{2.6}
With the notation above, $[\Theta(U)]
\geq [\Theta(W)] \leq [\Theta(V)]$.
\end{cor}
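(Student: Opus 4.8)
The plan is to extract the conclusion directly from the chain of canonical isomorphisms displayed just before Proposition~\ref{2.5}, by identifying the thorax $\Theta(W)$ with appropriate terms of that chain and then observing that those terms are visibly factor groups of $\Theta(U)$ and of $\Theta(V)$.

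First I would recall that $\Theta(W)$ is, by definition, the common isomorphism type of ${}^\bullet W/{}_\bullet W$ and $W^\bullet/W_\bullet$, and that the Zassenhaus diagram (equivalently, Proposition~\ref{2.5} together with the displayed chain preceding it) supplies isomorphisms
$$\frac{{}^\bullet W}{{}_\bullet W} \;\cong\;
  \frac{(U^\bullet \cap {}^\bullet V)\,U_\bullet}{(U^\bullet \cap {}_\bullet V)\,U_\bullet}
  \;\cong\; \frac{M}{N} \;\cong\;
  \frac{(U^\bullet \cap {}^\bullet V)\,{}_\bullet V}{(U_\bullet \cap {}^\bullet V)\,{}_\bullet V}
  \;\cong\; \frac{W^\bullet}{W_\bullet}\,,$$
where $M = U^\bullet \cap {}^\bullet V$ and $N = (U^\bullet \cap {}_\bullet V)(U_\bullet \cap {}^\bullet V)$ as in Proposition~\ref{2.5}. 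In the second term, both the numerator $(U^\bullet \cap {}^\bullet V)\,U_\bullet$ and the denominator $(U^\bullet \cap {}_\bullet V)\,U_\bullet$ are subgroups of $U^\bullet$ containing $U_\bullet$, the normality $(U^\bullet \cap {}_\bullet V)\,U_\bullet \unlhd (U^\bullet \cap {}^\bullet V)\,U_\bullet$ being one of the standard assertions of Zassenhaus' Lemma applied to $U_\bullet \unlhd U^\bullet$ and ${}_\bullet V \unlhd {}^\bullet V$. Passing to $U^\bullet/U_\bullet$ via the correspondence theorem, that second term is thus isomorphic to a subquotient of $U^\bullet/U_\bullet \cong \Theta(U)$, whence $[\Theta(W)] \leq [\Theta(U)]$ by the definition of the partial order $\leq$ on isomorphism classes of groups. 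Symmetrically, in the fourth term both $(U^\bullet \cap {}^\bullet V)\,{}_\bullet V$ and $(U_\bullet \cap {}^\bullet V)\,{}_\bullet V$ are subgroups of ${}^\bullet V$ containing ${}_\bullet V$, with the required normality again from Zassenhaus' Lemma; so that term is a subquotient of ${}^\bullet V/{}_\bullet V \cong \Theta(V)$, giving $[\Theta(W)] \leq [\Theta(V)]$. Combining the two inequalities yields $[\Theta(U)] \geq [\Theta(W)] \leq [\Theta(V)]$.

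There is no genuine obstacle here: the argument is bookkeeping with the butterfly diagram that was already displayed. The only point to handle with a little care is the distinction between a subquotient and a mere subgroup or quotient, namely checking that the two relevant quotients really do have numerator and denominator sitting (after the standard identification $G \geq M' \unrhd N'$) between $1$ and the relevant thorax, which is precisely what legitimises writing $[\,\cdot\,] \leq [\Theta(U)]$ and $[\,\cdot\,] \leq [\Theta(V)]$.
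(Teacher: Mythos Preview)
Your argument is correct and is essentially the same as the paper's: the paper simply states that the corollary follows immediately from Proposition~\ref{2.5} and the displayed chain of isomorphisms, and you have spelled out exactly that deduction. Your added care in verifying that the relevant terms are genuine subquotients of $\Theta(U)$ and $\Theta(V)$ is appropriate but not additional content beyond what the butterfly diagram already encodes.
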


\begin{cor}
\label{2.7}
The morphism $U \in \cS(R, S)$ factorizes
through $\Theta(U)$. That is to say, there
exist $X \in \cS(R, \Theta(U))$ and
$Y \in \cS(\Theta(U), S)$ such that
$U = X * Y$. Furthermore, $U$ is an
$\cS$-isomorphism if and only if
$\Theta(U) \cong U$.
\end{cor}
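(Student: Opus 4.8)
The plan is to exhibit the factorization explicitly via Goursat's Theorem (Proposition~\ref{2.4}), with Proposition~\ref{2.5} as a check, and to settle the ``furthermore'' clause using Corollary~\ref{2.6}.

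First I would fix the representative $\Theta(U) = {}^\bullet U/{}_\bullet U$ of the thorax and put
\[
X = \Delta\bigl({}^\bullet U, {}_\bullet U, \id_{\Theta(U)}, 1, \Theta(U)\bigr) \in \cS(R, \Theta(U)), \qquad
Y = \Delta\bigl(\Theta(U), 1, \theta_U, U_\bullet, U^\bullet\bigr) \in \cS(\Theta(U), S) ;
\]
so $X$ is the graph in $R {\times} \Theta(U)$ of the canonical surjection ${}^\bullet U \to {}^\bullet U/{}_\bullet U$, and $Y = \{\theta_U(sU_\bullet) {\times} s : s \in U^\bullet\}$. Expanding $X * Y$ straight from the definition of the star product, an element $r {\times} s$ lies in it exactly when there is $\bar m \in \Theta(U)$ with $r \in {}^\bullet U$ and $\bar m = r\,{}_\bullet U$ and $s \in U^\bullet$ and $\bar m = \theta_U(sU_\bullet)$; eliminating $\bar m$ leaves $r \in {}^\bullet U$, $s \in U^\bullet$, $r\,{}_\bullet U = \theta_U(sU_\bullet)$, which is the Goursat description of $U$. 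Hence $U = X * Y$. (Alternatively, Proposition~\ref{2.5} applied to this pair has $M = \Theta(U)$ and $N = 1$, its induced $\oo\phi$, $\oo\psi$ recover $\theta_U$, and the remaining four parameters of $X * Y$ visibly agree with those of $U$.)

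For the second assertion I would argue the two implications separately. Suppose $U$ is an $\cS$-isomorphism, with inverse $V$. Since $\Theta(\id_R^\cS) = \Theta(\Delta(R)) \cong R$ and $\Theta(\id_S^\cS) \cong S$, Corollary~\ref{2.6} applied to $U * V = \id_R^\cS$ and to $V * U = \id_S^\cS$ gives $[R] \leq [\Theta(U)]$ and $[S] \leq [\Theta(U)]$; since $\Theta(U) \cong {}^\bullet U/{}_\bullet U$ and $\Theta(U) \cong U^\bullet/U_\bullet$ are subquotients of $R$ and of $S$ respectively, the reverse inequalities hold, so $[\Theta(U)] = [R] = [S]$. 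Comparing orders then forces ${}^\bullet U = R$, ${}_\bullet U = 1$, $U^\bullet = S$, $U_\bullet = 1$, so that $U = \Delta(\theta_U)$ is the graph of a group isomorphism $R \leftarrow S$ and $U \cong S \cong \Theta(U)$ as groups. Conversely, if $\Theta(U) \cong U$ then the surjection $U \twoheadrightarrow {}^\bullet U \twoheadrightarrow {}^\bullet U/{}_\bullet U = \Theta(U)$ between finite groups of equal order is an isomorphism, which forces ${}_\bullet U = 1$ and, by triviality of the kernel $\{1\}{\times}U_\bullet$ of $U \to {}^\bullet U$, also $U_\bullet = 1$; thus $U = \Delta(\theta_U)$ with $\theta_U : {}^\bullet U \leftarrow U^\bullet$ an isomorphism.

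I expect the crux to be the very end of this converse. To conclude that $U$ is invertible in $\cS$ one needs its candidate inverse $\Delta(\theta_U^{-1}) \in \cS(S, R)$ to satisfy $U * \Delta(\theta_U^{-1}) = \id_R^\cS$ and $\Delta(\theta_U^{-1}) * U = \id_S^\cS$; but these star products come out as $\Delta({}^\bullet U)$ and $\Delta(U^\bullet)$, equalling the identities precisely when ${}^\bullet U = R$ and $U^\bullet = S$, whereas $\Theta(U) \cong U$ by itself controls only the lower Goursat data ${}_\bullet U$, $U_\bullet$. I would close this either by reading ``$\Theta(U) \cong U$'' in the strong sense $[\Theta(U)] = [R] = [S]$, under which the first implication's argument simply reverses, or by invoking whatever fullness requirement the paper places on the morphisms it calls $\cS$-isomorphisms.
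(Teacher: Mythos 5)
Your construction of $X$ and $Y$ and your proof of the forward implication are correct, and they are exactly the ``immediate'' consequences of Propositions~\ref{2.4} and~\ref{2.5} that the paper intends (your $X$ and $Y$ are the graphs that reappear as $\ltri(\phi)$ and $\rtri(\psi)$ in Section~7). The worry you raise about the converse is well founded, and the fault lies in the statement, not in your argument: counting orders, $|U| = |{}^\bullet U| \cdot |U_\bullet|$ while $|\Theta(U)| = |{}^\bullet U| / |{}_\bullet U|$, so $\Theta(U) \cong U$ says precisely that ${}_\bullet U = 1 = U_\bullet$, i.e.\ $U = \Delta(\theta_U)$ for an isomorphism $\theta_U$ between the subgroups ${}^\bullet U \leq R$ and $U^\bullet \leq S$; it does not force ${}^\bullet U = R$ or $U^\bullet = S$. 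Concretely, $U = 1 {\times} 1 \leq R {\times} R$ with $R$ nontrivial satisfies $\Theta(U) \cong 1 \cong U$ but is not invertible, since $U * V = \Delta(R)$ would give $[R] \leq [\Theta(U)] = [1]$ by Corollary~\ref{2.6}. Your proposed strong reading --- $[\Theta(U)] = [R] = [S]$, equivalently $U$ is the graph of a group isomorphism $R \leftarrow S$ --- is the correct version, and it is the one the paper actually relies on later: Section~3 takes the non-isomorphisms of $\End_\cS(G)$ to be exactly the $U$ with $[\Theta(U)] < [G]$, and Corollary~\ref{2.8} needs $\Aut_\cS(R)$ to consist of the $\Delta(\theta)$ with $\theta \in \Aut(R)$. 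With that reading, your two implications close the argument completely; no appeal to any extra ``fullness'' hypothesis is needed, because the forward implication already shows that invertibility forces ${}^\bullet U = R$ and $U^\bullet = S$.
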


\begin{cor}
\label{2.8}
We have a group isomorphism
$\mu_R : \Aut_\cS(R) \ni \Delta(\theta)
\mapsfrom \theta \in \Aut(R)$.
\end{cor}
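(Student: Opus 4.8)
The plan is to verify directly that $\mu_R\colon\theta\mapsto\Delta(\theta)$ is a well-defined injective group homomorphism $\Aut(R)\to\Aut_\cS(R)$, and then, in a separate and slightly more substantial step, that it is surjective. Well-definedness is clear: for $\theta\in\Aut(R)$ the set $\Delta(\theta)=\{\theta(y)\times y:y\in R\}$ is the graph of the homomorphism $\theta$, hence a subgroup of $R\times R$, i.e.\ an element of $\cS(R,R)$. Unwinding the definition of the star product shows that, for $\phi,\psi\in\Aut(R)$, an element $r\times t$ lies in $\Delta(\phi)*\Delta(\psi)$ exactly when there is an $s$ with $r=\phi(s)$ and $s=\psi(t)$, that is, when $r=(\phi\circ\psi)(t)$; hence $\Delta(\phi)*\Delta(\psi)=\Delta(\phi\circ\psi)$. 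Taking $\psi=\phi^{-1}$ and recalling $\Delta(\id_R)=\Delta(R)=\id_R^\cS$ shows that $\Delta(\theta)$ has two-sided inverse $\Delta(\theta^{-1})$ in $\cS(R,R)$, so $\mu_R$ does land in $\Aut_\cS(R)$ and is a homomorphism. Injectivity is immediate: if $\Delta(\theta)=\Delta(\theta')$ then, since in the description $\{\theta(y)\times y:y\in R\}$ the second coordinate determines the first, $\theta=\theta'$ (this is the injectivity half of Goursat's Theorem, Proposition~\ref{2.4}, applied to the quintuple $(R,1,\theta,1,R)$).

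For surjectivity, let $U\in\Aut_\cS(R)$ with two-sided inverse $V$, and put $W=U*V=\id_R^\cS=\Delta(R)$. Since ${}^\bullet(\Delta(R))=R$ and ${}_\bullet(\Delta(R))=1$, we have $\Theta(\Delta(R))\cong R$, so Corollary~\ref{2.6} yields $[\Theta(U)]\geq[R]$. On the other hand $\Theta(U)\cong{}^\bullet U/{}_\bullet U$ is a subquotient of $R$, so $[\Theta(U)]\leq[R]$; hence $\Theta(U)\cong R$. Because $R$ is finite and ${}^\bullet U\leq R$, the equality $|{}^\bullet U|/|{}_\bullet U|=|R|$ forces ${}^\bullet U=R$ and ${}_\bullet U=1$. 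Feeding this into the order identities that accompany Goursat's description, $|U|=|{}^\bullet U|\cdot|U_\bullet|=|{}_\bullet U|\cdot|U^\bullet|$, gives $|R|\cdot|U_\bullet|=|U^\bullet|\leq|R|$, so $U_\bullet=1$ and $U^\bullet=R$ as well. Therefore $U=\Delta(R,1,\theta_U,1,R)=\Delta(\theta_U)$ with $\theta_U\in\Aut(R)$, i.e.\ $U=\mu_R(\theta_U)$. (Alternatively, Corollary~\ref{2.7} gives $\Theta(U)\cong U$ directly, from which the same order count already yields ${}_\bullet U=U_\bullet=1$; the relations $U*V=V*U=\Delta(R)$ then pin down ${}^\bullet U=U^\bullet=R$.)

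I do not expect a genuine obstacle here. The only step that requires care is surjectivity, and its content is simply the observation that an $\cS$-automorphism of $R$ can neither ``forget'' a subgroup --- forcing ${}^\bullet U=U^\bullet=R$ --- nor ``route through'' a proper subquotient --- forcing ${}_\bullet U=U_\bullet=1$; this is precisely what Corollary~\ref{2.6} together with the numerology of Goursat's Theorem provides. Everything else (well-definedness, multiplicativity, injectivity) is routine bookkeeping with the defining formulas.
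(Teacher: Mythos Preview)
Your proof is correct and follows the same route the paper indicates: the paper simply records Corollary~\ref{2.8} as an immediate consequence of Proposition~\ref{2.5} (alongside Corollaries~\ref{2.6} and~\ref{2.7}), and your argument unpacks exactly this, using Corollary~\ref{2.6} for the thorax bound and Goursat numerology to pin down the quintuple. Your explicit appeal to finiteness of $R$ is in line with the paper's standing convention that, preliminaries aside, all groups under consideration are finite.
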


We now introduce the cocycles that will
appear in the definition of $\Lambda$.
Let $F$, $G$, $H$, $I$ be any finite groups.
For brevity, we write $\ell(F) = \ell(|F|)$.
Let $U \in \cS(F, G)$, $V \in \cS(G, H)$,
$W \in \cS(H, I)$. We define
$$\sigma(U, V) =
  \ell(U_\bullet \cap {}_\bullet V) \; .$$
Consider the subgroup $A = \{ g {\times}
h : 1 {\times} g \in U, g {\times} h \in V,
h {\times} 1 \in W \} \leq G {\times} H$.
We have
$$\frac{U_\bullet \cap {}_\bullet
  (V * W)}{U_\bullet \cap {}_\bullet V}
  \cong \frac{{}^\bullet A}{{}_\bullet A}
  \cong \frac{A^\bullet}{A_\bullet} \cong
  \frac{(U * V)_\bullet \cap {}_\bullet W}
  {V_\bullet \cap {}_\bullet W} \; .$$
Thus, we have given a quick proof of
the equality, due to Boltje--Danz
\cite[3.5]{BD13},
$$|U_\bullet \cap {}_\bullet V| .
  |(U * V)_\bullet \cap {}_\bullet W| =
  |U_\bullet \cap {}_\bullet (V * W)| .
  |V_\bullet \cap {}_\bullet W| \; .$$
Since $\ell$ is a monoid homomorphism,
the conclusion can be expressed as
follows.

\begin{pro}
\label{2.9} {\rm (Boltje--Danz.)}
With the notation above,
$$\sigma(U, V * W) \sigma(V, W) =
  \sigma(U, V) \sigma(U * V, W) \; .$$
In other words, $\sigma$ is a cocycle
for the full subcategory of $\cS$ on
the class of finite groups.
\end{pro}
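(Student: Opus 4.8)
The plan is to reduce the asserted identity to the order equality displayed just above, which is itself a one-line consequence of Goursat's Theorem. First recall that, by definition, $\sigma(U, V) = \ell(U_\bullet \cap {}_\bullet V) = \ell(|U_\bullet \cap {}_\bullet V|)$, and that $\ell$ is a homomorphism from the multiplicative monoid $\NN - \{ 0 \}$ to $\kay^\times$. Hence
$$\sigma(U, V) \, \sigma(U * V, W)
  = \ell\big( |U_\bullet \cap {}_\bullet V| \cdot
  |(U * V)_\bullet \cap {}_\bullet W| \big)$$
and
$$\sigma(U, V * W) \, \sigma(V, W)
  = \ell\big( |U_\bullet \cap {}_\bullet (V * W)| \cdot
  |V_\bullet \cap {}_\bullet W| \big) \; ,$$
so it suffices to prove the equality of positive integers
$$|U_\bullet \cap {}_\bullet V| \cdot
  |(U * V)_\bullet \cap {}_\bullet W|
  = |U_\bullet \cap {}_\bullet (V * W)| \cdot
  |V_\bullet \cap {}_\bullet W| \; .$$

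To establish that, I would work with the subgroup $A \leq G {\times} H$ displayed above, noting that, since $1 {\times} g \in U$ forces $g \in U_\bullet$ and $h {\times} 1 \in W$ forces $h \in {}_\bullet W$, one may write $A = V \cap (U_\bullet {\times} {}_\bullet W)$. The crux is to identify the four Goursat parameters of $A$ in terms of $U$, $V$, $W$, namely
$${}^\bullet A = U_\bullet \cap {}_\bullet (V * W), \quad
  {}_\bullet A = U_\bullet \cap {}_\bullet V, \quad
  A^\bullet = (U * V)_\bullet \cap {}_\bullet W, \quad
  A_\bullet = V_\bullet \cap {}_\bullet W \; .$$
Each of these is a direct membership check against the definition of the star product: for example, $g \in {}^\bullet A$ means $g \in U_\bullet$ together with $g {\times} h \in V$ for some $h \in {}_\bullet W$, which says precisely that $g \in U_\bullet$ and $g {\times} 1 \in V * W$, i.e. $g \in U_\bullet \cap {}_\bullet(V * W)$; the remaining three identifications are entirely analogous.

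Given those identifications, Goursat's Theorem (Proposition~\ref{2.4}) supplies the isomorphism ${}^\bullet A / {}_\bullet A \cong A^\bullet / A_\bullet$ (the thorax of $A$), so that $|{}^\bullet A| \cdot |A_\bullet| = |{}_\bullet A| \cdot |A^\bullet|$, which is exactly the order equality sought above. Applying $\ell$ and invoking its multiplicativity then gives $\sigma(U, V * W) \sigma(V, W) = \sigma(U, V) \sigma(U * V, W)$; since $*$ is the composition in $\cS$, this is the cocycle condition for the composable $\cS$-morphisms $U$, $V$, $W$, so $\sigma$ is a cocycle for the full subcategory of $\cS$ on the finite groups. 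I expect the only slightly delicate point to be the bookkeeping in the four membership checks for the Goursat parameters of $A$; once these are recorded, the rest is formal.
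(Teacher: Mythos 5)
Your proof is correct and follows essentially the same route as the paper: both introduce the auxiliary subgroup $A \leq G \times H$, identify its four Goursat parameters as the four intersections appearing in the Boltje--Danz order identity, apply Goursat's Theorem to get $|{}^\bullet A| \cdot |A_\bullet| = |{}_\bullet A| \cdot |A^\bullet|$, and then use the multiplicativity of $\ell$. The only difference is that you spell out the membership checks that the paper leaves implicit, and these checks are all accurate.
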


We write $\kay_\sigma \cS$ to denote
the twisted category associated with
$\sigma$. To avoid misunderstanding,
let us emphasize that, although the
objects of $\cS$ are arbitrary groups,
the cocycle $\sigma$ is defined only
for finite groups and the objects of
$\kay_\sigma \cS$ are arbitrary finite
groups. Retaining the notation above,
we write $s_U^{F, G}$ to denote $U$
as an element of $\kay_\sigma
\cS(F, G)$. The composition for
$\kay_\sigma \cS(F, G)$ is given by
$$s_U^{F, G} \comp s_V^{G, H} =
  \sigma(U, V) \, s_{U * V}^{F, H} \; .$$
Below, we shall usually omit the
$\comp$ symbol. The identity
$\kay_\sigma \cS$-morphism on
$G$ is $\id_G = s_{\Delta(G)}^{G, G}$.

\begin{lem}
\label{2.10}
Let $E$, $F$, $G$ be finite groups
and $U \in \cS(F, G)$ such that
$E \cong \Theta(U)$. Then there exist
$X \in \cS(F, E)$ and $Y \in \cS(E, G)$
satisfying $U = X * Y$. For any such
$X$ and $Y$, we have $s_U^{F, G}
= s_X^{F, E} s_Y^{E, G}$.
\end{lem}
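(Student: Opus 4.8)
The plan is to combine the factorization already available for the untwisted star product (Corollary~\ref{2.7}) with a cocycle-triviality computation that shows the scalar $\sigma$ collapses when we compose through the thorax. First I would invoke Corollary~\ref{2.7} to get the existence of $X \in \cS(F, E)$ and $Y \in \cS(E, G)$ with $U = X * Y$; since $E \cong \Theta(U)$ by hypothesis, the corollary applies verbatim once we identify $\Theta(U)$ with $E$. This disposes of the first sentence of the statement. For the second sentence, fix any such $X$ and $Y$. By the definition of composition in $\kay_\sigma \cS$ we have $s_X^{F, E} s_Y^{E, G} = \sigma(X, Y)\, s_{X * Y}^{F, G} = \sigma(X, Y)\, s_U^{F, G}$, so everything reduces to proving $\sigma(X, Y) = 1$, i.e. $\ell(X_\bullet \cap {}_\bullet Y) = 1$, i.e. $X_\bullet \cap {}_\bullet Y = 1$.

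The key step is therefore the identity $X_\bullet \cap {}_\bullet Y = 1$. The intuition is that $X$ factors $U$ through its thorax "minimally": the group $E \cong \Theta(U)$ is, up to isomorphism, both $X^\bullet / X_\bullet$ and $\Theta(U)$, and similarly $E \cong {}^\bullet Y / {}_\bullet Y$. Because $\Theta(U) \cong X^\bullet/X_\bullet$ already has order $|\Theta(U)|$, and $W = X * Y$ has $\Theta(W) = \Theta(U)$, the Butterfly/Zassenhaus isomorphisms from Proposition~\ref{2.5} (with $U, V$ there replaced by $X, Y$) force the relevant intersections to be trivial. Concretely, I would unwind the isomorphism chain
$$\frac{{}^\bullet W}{{}_\bullet W} \cong \frac{X^\bullet \cap {}^\bullet Y}{(X^\bullet \cap {}_\bullet Y)(X_\bullet \cap {}^\bullet Y)}$$
and use the fact that $|{}^\bullet W / {}_\bullet W| = |\Theta(W)| = |\Theta(U)| = |E| = |X^\bullet / X_\bullet| = |{}^\bullet Y / {}_\bullet Y|$ to conclude that $X^\bullet \cap {}^\bullet Y = X^\bullet = {}^\bullet Y$ (both inclusions being forced by counting, after noting $X^\bullet \leq {}^\bullet Y$ need not hold a priori but the subquotient $M/N$ with $M = X^\bullet \cap {}^\bullet Y$ must have order $|E|$ yet sits inside quotients of order $|E|$) and hence $(X^\bullet \cap {}_\bullet Y)(X_\bullet \cap {}^\bullet Y) = 1$, whence in particular $X_\bullet \cap {}^\bullet Y = 1$ and a fortiori $X_\bullet \cap {}_\bullet Y = 1$.

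The main obstacle is exactly this order-counting argument: one has to be careful that $\Theta(U) \cong E$ does not merely give an abstract isomorphism but that the specific factorization produced by Corollary~\ref{2.7} (or any factorization through a group abstractly isomorphic to $\Theta(U)$) forces $X^\bullet/X_\bullet$, $E$, and $M/N$ all to have the same order, so that the sandwiched subquotient $M/N$ equals each of the outer ones and the denominator subgroup $N = (X^\bullet \cap {}_\bullet Y)(X_\bullet \cap {}^\bullet Y)$ is trivial. An alternative, perhaps cleaner, route to the same conclusion: show directly that for \emph{any} $X, Y$ with $X * Y = U$ and $\Theta(X) \cong \Theta(Y) \cong \Theta(U)$ one has $\sigma(X, Y) = 1$, by observing that $\Theta(X*Y)$ is a common factor group of $\Theta(X)$ and $\Theta(Y)$ (Corollary~\ref{2.6}) which here has the \emph{same} order as both, forcing the intervening intersections measured by $X_\bullet \cap {}_\bullet Y$ to be trivial. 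Once $\sigma(X, Y) = 1$ is established, the desired equality $s_U^{F, G} = s_X^{F, E} s_Y^{E, G}$ is immediate, and the lemma follows.
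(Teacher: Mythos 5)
Your proposal is correct and follows essentially the same route as the paper: existence of $X$ and $Y$ from Corollary \ref{2.7}, then an order-counting argument via Corollary \ref{2.6} and the Zassenhaus isomorphisms to force the intersection measured by $\sigma(X,Y)$ to be trivial. The paper gets there slightly more directly by noting that $\Theta(X)\cong E$ together with $X^\bullet\leq E$ forces $X_\bullet=1$ outright, but your detour through the middle term $M/N$ of the Butterfly chain reaches the same conclusion validly.
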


\begin{proof}
Corollary \ref{2.7} implies the existence
of $X$ and $Y$. Corollary \ref{2.6}
implies that $\Theta(X) \cong E$. Hence,
$X_\bullet = 1$ and $\sigma(X, Y) = 1$.
\end{proof}

\section{Classification of simple modules}

To apply ring theoretic techniques, we
replace $\kay_\sigma \cS$ with the
small full subcategory
$$\Lambda_\cK =
  \kay_\sigma \cS_\cK \; .$$
Although $\Lambda_\cK$ is determined
by the triple $(\cK, \kay, \ell)$, we omit
$\kay$ and $\ell$ from the notation
because we shall always be treating them
as fixed. Let us point out that the
dependence on $\ell$ is a dependence
only on the values $\ell(q)$ where $q$
runs over the prime divisors of the
orders of the elements of $\cK$.
When no ambiguity can arise, we write
$\Lambda = \Lambda_\cK$. Employing
an abuse of notation discussed in the
previous section, the algebra associated
with the category $\Lambda$ will also
be written as $\Lambda$. We shall be
making a study of the algebra
$\Lambda$, which we view as coming
equipped with the complete family of
mutually orthogonal idempotents
$(\id_G^\Lambda : G \in \cK)$. We can
also view $\Lambda$ as the twisted
category algebra associated with the
restriction of $\sigma$ to $\cK$.
In this section, we shall be making
much use of the $\kay$-basis
$\{ s_U^{F, G} : F \in \cK \ni G,
U \in \cS(F, G) \}$ for $\Lambda$. We
call that basis the {\bf square basis}.

Let $F, G, H \in \cK$. Given group
isomorphisms $\phi : F \leftarrow G$
and $\psi : G \leftarrow H$, then
$$s_{\Delta(\phi)}^{F, G} \,
  s_{\Delta(\psi)}^{G, H} =
  s_{\Delta(\phi \psi)}^{F, H} \; .$$
So, specializing and reinterpreting the
map in Corollary \ref{2.8}, we have a
group monomorphism
$$\mu_G \: : \: \Aut_\Lambda(G) \ni
  s_{\Delta(\theta)}^{G, G}
  \mapsfrom \theta \in \Aut(G) \; .$$
Extending $\kay$-linearly, we obtain a
unity-preserving algebra monomorphism
$$\mu_G \: : \: \End_\Lambda(G)
  \leftarrow \kay \Aut(G) \; .$$
By Corollary \ref{2.7}, the set of
non-isomorphisms in the monoid
$\End_\cS(G) = \cS(G, G)$ is
$$\cS(G, G)_< = \{ U \in \cS(G, G)
  : [\Theta(U)] < [G] \} \; .$$
Let $\End_\Lambda(G)_<$ denote the
$\kay$-submodule of
$\End_\Lambda(G)$ with $\kay$-basis
$\{ s_U^{G, G} : U \in \cS(G, G)_< \}$. By
Corollary \ref{2.6}, $\End_\Lambda(G)_<$
is an ideal of $\End_\Lambda(G)$. The
next result follows.

\begin{pro}
\label{3.1}
Let $G \in \cK$. Then
$$\End_\Lambda(G) = \mu_G(\kay
  \Aut(G)) \oplus \End_\Lambda(G)_<$$
as the direct sum of a common-unity
subalgebra and an ideal.
\end{pro}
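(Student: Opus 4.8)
The plan is to recognize the claimed decomposition as the partition of the square basis of $\End_\Lambda(G)$ into the part indexed by the $\cS$-automorphisms of $G$ and the part indexed by the remaining endomorphisms of $G$, and then to check that the two resulting $\kay$-submodules carry the asserted algebraic structure.

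First I would record the module-theoretic splitting. Since $\mu_G : \End_\Lambda(G) \leftarrow \kay\Aut(G)$ is a $\kay$-linear monomorphism carrying each $\theta \in \Aut(G)$ to $s_{\Delta(\theta)}^{G,G}$, the family $\{ s_{\Delta(\theta)}^{G,G} : \theta \in \Aut(G) \}$ is a $\kay$-basis of $\mu_G(\kay\Aut(G))$. By Corollary \ref{2.8} the assignment $\theta \mapsto \Delta(\theta)$ is a bijection from $\Aut(G)$ onto the set of $\cS$-isomorphisms in $\cS(G,G)$, and, as noted in the discussion preceding the proposition (an application of Corollary \ref{2.7}), that set is precisely $\cS(G,G) \setminus \cS(G,G)_<$. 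Hence the square basis $\{ s_U^{G,G} : U \in \cS(G,G) \}$ of $\End_\Lambda(G)$ is the disjoint union of the basis $\{ s_{\Delta(\theta)}^{G,G} : \theta \in \Aut(G) \}$ of $\mu_G(\kay\Aut(G))$ with the basis $\{ s_U^{G,G} : U \in \cS(G,G)_< \}$ of $\End_\Lambda(G)_<$, and this is exactly the asserted internal direct sum of $\kay$-modules.

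It then remains to identify the algebraic roles of the two summands. The submodule $\mu_G(\kay\Aut(G))$ is a subalgebra, being the image of an algebra homomorphism, and it is a common-unity subalgebra because $\mu_G$ is unity-preserving: it contains $\mu_G(\id_G) = s_{\Delta(\id_G)}^{G,G} = s_{\Delta(G)}^{G,G} = \id_G^\Lambda$, the identity element of $\End_\Lambda(G)$. The submodule $\End_\Lambda(G)_<$ has already been shown, by means of Corollary \ref{2.6}, to be an ideal of $\End_\Lambda(G)$. Assembling these facts gives the proposition.

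I do not anticipate any genuine obstacle here: the preparatory corollaries have done the substantive work and the argument is essentially bookkeeping. The only point worth stating carefully is the identification $\cS(G,G) \setminus \cS(G,G)_< = \{ \Delta(\theta) : \theta \in \Aut(G) \}$, which combines Corollary \ref{2.8} with the characterization of $\cS(G,G)_<$ as the set of $\cS$-non-isomorphisms $G \leftarrow G$ recorded just above the proposition — a characterization that in turn uses the finiteness of $G$, since $[\Theta(U)] = [G]$ then forces ${}^\bullet U = U^\bullet = G$ and ${}_\bullet U = U_\bullet = 1$.
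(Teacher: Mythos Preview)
Your proof is correct and follows the same approach as the paper. In fact, the paper does not give a separate proof environment for this proposition at all: the preceding paragraph establishes (via Corollary \ref{2.7}) that $\cS(G,G)_<$ is exactly the set of non-isomorphisms and (via Corollary \ref{2.6}) that $\End_\Lambda(G)_<$ is an ideal, and then simply declares ``The next result follows.'' Your write-up is a careful unpacking of precisely those two ingredients together with the observation that $\mu_G$ is unity-preserving.
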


Digressing to introduce some general
notation, let $A$ and $B$ be rings and
$\theta : A \leftarrow B$ a
homomorphism. We write
${}_A \Ind_B^\theta$ for induction
to $A$-modules from $B$-modules
via $\theta$. We write
${}_B \Res_A^\theta$ for restriction
via $\theta$. When $\theta$ is an
isomorphism, we write
${}_A \Iso_B^\theta = {}_A \Ind_B^\theta
= {}_A \Res_B^{\theta^{-1}}$. We
sometimes omit the subscripts $A$
and $B$.

We define an {\bf $\cS$-seed} for
$\cK$ to be a pair $(E, W)$ such that
$E$ is a factor group of an element of
$\cK$ and $W$ is a simple
$\kay \Aut(E)$-module. Two such
pairs $(E, W)$ and $(E', W')$ are said
to be {\bf equivalent} provided there
exists a group isomorphism $\theta :
E \leftarrow E'$ such that $W \cong
\Iso^\theta(V)$. In Theorems \ref{3.4}
and \ref{3.5}, for arbitrary $\cK$, we
shall be describing a bijective
correspondence between the
isomorphism classes of simple
$\Lambda$-modules and the
equivalence classes of $\cS$-seeds
for $\cK$. The next theorem, too,
describes such a correspondence, but
under a strong hypothesis on $\cK$.
For other group categories, analogous
theorems, with similar hypotheses on
the set of objects, can be found in, for
instance, Th\'{e}venaz--Webb
\cite[Section 2]{TW95}, Bouc
\cite[4.3.10]{Bou10}. We say that $\cK$
is {\bf closed under factor groups up
to isomorphism} provided $\cK$ owns
an isomorphic copy of every subquotient
of every element of $\cK$. Note, this is
a strong and sometimes inconvenient
condition which excludes the important
case where $\cK$ consists of a single
non-trivial finite group.

\begin{thm}
\label{3.2}
Suppose $\cK$ is closed under factor
groups up to isomorphism. Then every
$\cS$-seed for $\cK$ is equivalent to an
$\cS$-seed $(E, W)$ for $\cK$ such that
$E \in \cK$. The isomorphism classes
$[S]$ of simple $\Lambda$-modules
$S$ and the equivalence classes $[E, W]$
of $\cS$-seeds $(E, W)$ for $\cK$
are in a bijective correspondence
$[S] \leftrightarrow [E, W]$ characterized
by the condition that, if $E \in \cK$, then,
with respect to the partial ordering of
isomorphism classes of groups, $[E]$ is
minimal subject to $S(E) \neq 0$ and
$W \cong \Res^{\mu_E}(S(E))$.
\end{thm}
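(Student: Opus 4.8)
The plan is to reduce Theorem~\ref{3.2} to the standard representation theory of the ``triangular'' structure exhibited in Proposition~\ref{3.1}, together with the factorization results of Corollaries~\ref{2.6} and~\ref{2.7}. First I would observe that when $\cK$ is closed under factor groups up to isomorphism, every subquotient $E$ of an element of $\cK$ is isomorphic to an element of $\cK$; since the equivalence relation on $\cS$-seeds only sees $E$ up to isomorphism, we may always replace a seed $(E,W)$ by an equivalent one with $E\in\cK$. This disposes of the first sentence. For the correspondence, the key structural input is that $\Lambda$ has a filtration by the partial order $\leq$ on isomorphism classes $[\Theta(U)]$ of thoraxes: by Corollary~\ref{2.6}, the $\kay$-span $\Lambda_{<[E]}$ of those $s_U^{F,G}$ with $[\Theta(U)]<[E]$ is an ideal, and more generally $\Lambda_{\leq[E]}/\Lambda_{<[E]}$ has a description in terms of $\kay\Aut(E)$. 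So the natural route is: classify the simple modules of each ``layer'' and show each layer contributes exactly the simple $\kay\Aut(E)$-modules, with no collapsing across layers.

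The main technical step is therefore the following. Fix a factor group $E$ of an element of $\cK$, taken with $E\in\cK$, and consider the idempotent truncation $e_E\Lambda e_E$ where $e_E=\id_E^\Lambda$; this is $\End_\Lambda(E)$, which by Proposition~\ref{3.1} is $\mu_E(\kay\Aut(E))\oplus\End_\Lambda(E)_<$ with the second summand an ideal. I would then argue that for a simple $\Lambda$-module $S$ with $[E]$ \emph{minimal} subject to $S(E)=e_E S\neq 0$, the ideal $\End_\Lambda(E)_<$ annihilates $S(E)$: any $s_U^{E,E}$ with $[\Theta(U)]<[E]$ factors as $s_X^{E,D}s_Y^{D,E}$ with $[D]=[\Theta(U)]<[E]$ by Lemma~\ref{2.10}, so it acts on $S$ through $S(D)$, which vanishes by minimality. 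Hence $S(E)$ is a module for the quotient $\End_\Lambda(E)/\End_\Lambda(E)_<\cong\kay\Aut(E)$, and one checks it is simple there (simplicity of $S(E)$ over $\End_\Lambda(E)$ is Proposition~\ref{2.1} applied to the corner subalgebra $e_E\Lambda e_E$, using that $S(E)\neq 0$; passage to the quotient preserves simplicity once the ideal acts as zero). Set $W=\Res^{\mu_E}(S(E))$. Conversely, given a seed $(E,W)$ with $E\in\cK$, I would build $S$ by inducing $W$ up: form the $\Lambda$-module $\Lambda e_E\otimes_{\kay\Aut(E)}W$ (via $\mu_E$), quotient by the submodule generated by its ``$\Lambda_{<[E]}$ part,'' and take the head; standard arguments show the result is simple with $S(E)\cong W$ and $[E]$ minimal with $S(E)\neq 0$.

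The remaining point is well-definedness and bijectivity: I would show that $[E]$ and $W$ are determined by $[S]$ (immediate from the minimality characterization and $W\cong\Res^{\mu_E}(S(E))$), and that equivalent seeds give isomorphic modules and conversely—this is where the isomorphism $\theta:E\leftarrow E'$ enters, conjugating the whole construction and matching $\Iso^\theta(W')\cong W$ with the identification $S(E)\cong S(E')$ coming from any $\cS$-isomorphism $E\to E'$ in $\Lambda$ (which exists by Corollary~\ref{2.7}, since $\Theta$ of such a morphism is $E$ itself). Putting these together yields the bijection $[S]\leftrightarrow[E,W]$ with the stated characterization.

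I expect the main obstacle to be the \emph{surjectivity} direction, i.e.\ showing that the induced-and-truncated module actually is nonzero and simple with the predicted evaluations, rather than collapsing or acquiring a larger ``support.'' Controlling $S(E)$ after passing to the head requires knowing that the ideal $\End_\Lambda(E)_<$ lands inside the radical of the relevant module, which is exactly a Nakayama-type argument using the nilpotence of the filtration by $\leq$; once that is in hand the rest is bookkeeping with Lemma~\ref{2.10} and Corollary~\ref{2.6}. A secondary annoyance is keeping the abuse ``$E\in\cK$ versus $E$ only a factor group'' straight, but the closure hypothesis makes that purely notational.
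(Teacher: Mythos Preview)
Your proposal is correct in outline and would give a complete, self-contained proof. It does, however, take a genuinely different route from the paper. The paper's proof is essentially one line: it observes that, by Lemma~\ref{2.10} and the closure hypothesis, every $\Lambda$-morphism $F\leftarrow G$ factors through some $T\in\cK$ with $[F]\geq [T]\leq [G]$; this is precisely the ``admissibility'' condition of Barker--Demirel [BD16], and the classification of simple modules is then imported wholesale from [BD16, 2.4]. What you do instead is reprove the relevant special case of that cited result from scratch, using the thorax filtration $\Lambda_{\leq [E]}$ and Proposition~\ref{3.1}. Your argument is the standard highest-weight-style construction (inflate $W$ along $\End_\Lambda(E)\twoheadrightarrow\kay\Aut(E)$, induce, take the simple head), and the ingredients you invoke---Corollary~\ref{2.6}, Lemma~\ref{2.10}, Proposition~\ref{2.1}---are exactly the right ones. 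The trade-off is transparency versus brevity: your proof exhibits the mechanism and is independent of [BD16], whereas the paper's proof is a two-sentence reduction to existing literature.

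Two minor cleanups if you carry this out. First, it is tidier to tensor over $\End_\Lambda(E)$ (after inflating $W$ via the projection of Proposition~\ref{3.1}) rather than over $\kay\Aut(E)$; this makes $e_E L_W\cong W$ hold on the nose and removes the need for your separate ``quotient by the $\Lambda_{<[E]}$ part'' step. Second, what you call ``nilpotence of the filtration by $\leq$'' is not needed in that form: the key point for the head being simple with $S(E)\cong W$ is just that any proper submodule of $L_W$ evaluates to zero at $E$ (because $W$ is simple over $\End_\Lambda(E)$ and generates $L_W$), so the sum of proper submodules is the unique maximal one. No Nakayama argument is required.
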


\begin{proof}
The first sentence of the conclusion is
obvious. By Lemma \ref{2.10} and the
hypothesis on $\cK$, every
$\Lambda$-morphism $F \leftarrow G$
factorizes through a group $T \in \cK$
such that $[F] \geq [T] \leq [G]$.
Perforce, in the terminology of [BD16],
the $\kay$-linear category $\Lambda$
is admissible with respect to the above
partial ordering. The required conclusion
now follows from [BD16, 2.4].
\end{proof}

To generalize the correspondence in
the latest theorem, we need a lemma.

\begin{lem}
\label{3.3}
Suppose $\cK$ is closed under
subquotients up to isomorphism.
Let $[S] \leftrightarrow [E, W]$ in the
notation of the latest theorem. Let
$G \in \cK$. Then $S(G) \neq 0$ if and
only if $E$ is a factor group of $G$.
\end{lem}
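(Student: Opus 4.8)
The plan is to deduce Lemma \ref{3.3} from Theorem \ref{3.2} together with the factorization results of Section 2. Fix $[S] \leftrightarrow [E,W]$ and $G \in \cK$; I want to show $S(G) \neq 0 \iff [E] \leq [G]$.

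For the forward direction, suppose $S(G) = \id_G . S \neq 0$. The defining characterization in Theorem \ref{3.2} tells us that, among all $H \in \cK$ with $S(H) \neq 0$, the class $[E]$ is minimal. Now pick $0 \neq m \in S(G)$. Since $S$ is simple and $\Lambda$ acts via morphisms, there is a $\Lambda$-morphism $F \leftarrow G$ not annihilating $m$; iterating, we can reach the object $E$ (or an isomorphic copy), so some $s_U^{E,G}$ with $U \in \cS(E,G)$ satisfies $s_U^{E,G} m \neq 0$. By Lemma \ref{2.10} applied with $T = \Theta(U)$, this morphism factorizes as $s_X^{E,T} s_Y^{T,G}$ through $T \in \cK$ (using that $\cK$ is closed under subquotients, so $\Theta(U) \in \cK$ up to isomorphism). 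Then $s_Y^{T,G} m \neq 0$, so $S(T) \neq 0$, and $[E] \leq [T]$ by minimality. But also $[T] = [\Theta(U)] \leq [\Theta(\Delta(G))] = [G]$ by Corollary \ref{2.6} (comparing $U$ with the identity morphism $\Delta(G)$ — indeed $\Theta$ of a subgroup of $G \times G$ whose second projection is $G$ is a factor group of $G$). Hence $[E] \leq [G]$, i.e. $E$ is a factor group of $G$.

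For the converse, suppose $[E] \leq [G]$, say $E \cong M/N$ with $G \geq M \unrhd N$. I want $\id_G . S \neq 0$. Consider the subgroup $U = \Delta(G, N, \theta, N, M) \in \cS(G, E)$ realizing the canonical isomorphism $\theta : G/N \cdots$ — more cleanly: there is a morphism $X \in \cS(E, G)$ and $Y \in \cS(G, E)$ with $\Theta(X) = \Theta(Y) = E$, coming from the factorization $\id_E^\cS$-data (Corollary \ref{2.7}); concretely take $X = \Delta(M, N, \can, N, M)$ read appropriately and similarly $Y$, so that $Y * X = \Delta(E)$ up to the relevant identifications, hence $s_Y^{G,E} s_X^{E,G} = \id_E$ in $\End_\Lambda(E)$ up to a unit in $\kay^\times$ (the cocycle values $\sigma$ here are $\ell$ of trivial intersections, hence $1$, by the argument in the proof of Lemma \ref{2.10} since $X_\bullet = Y_\bullet = 1$). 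Now $S(E) \neq 0$ by the characterization in Theorem \ref{3.2}, so pick $0 \neq n \in S(E)$. Then $s_X^{E,G} n \in S(G)$, and $s_Y^{G,E} s_X^{E,G} n = (\text{unit}) \cdot n \neq 0$, forcing $s_X^{E,G} n \neq 0$. Therefore $S(G) \neq 0$, as required.

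The main obstacle is the bookkeeping in the converse: producing explicit $\cS$-morphisms $X : E \leftarrow G$ and $Y : G \leftarrow E$ whose star-composite is $\Delta(E)$ (so that $s_X$ is ``split injective'' on $S(E)$), and checking that the relevant cocycle values $\sigma(X,Y)$ are trivial. Both points follow the pattern already used in Lemma \ref{2.10} — the thorax of each factor is $E$, so the $\bullet$-bottom groups vanish and $\sigma = \ell(1) = 1$ — but the Goursat parameters must be written down carefully so that $Y * X$ really is the diagonal $\Delta(E)$ rather than a proper subgroup. Once that is in hand, the forward direction is a routine application of minimality in Theorem \ref{3.2} plus Corollary \ref{2.6}.
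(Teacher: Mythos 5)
Your overall strategy matches the paper's: the forward implication is extracted from the minimality clause of Theorem \ref{3.2}, and the converse is proved by exhibiting a pair of morphisms factoring through $G$ whose composite is a unit multiple of $\id_E$, so that $S(E) \neq 0$ forces $S(G) \neq 0$. Two slips in your converse need repair, though neither is fatal. First, the directions are consistently reversed: with $X \in \cS(E,G)$ and $Y \in \cS(G,E)$, the composite $Y * X$ lies in $\cS(G,G)$, not $\cS(E,E)$, and $s_X^{E,G}$ maps $S(G)$ into $S(E)$, not the other way round. What you want is $X * Y = \Delta(E)$, realized by the Goursat data $X = \Delta(E,1,\phi,Y,B)$ and $Y = \Delta(B,Y,\phi^{-1},1,E)$ for a subquotient $B/Y$ of $G$ and an isomorphism $\phi : E \leftarrow B/Y$; then for $0 \neq n \in S(E)$ one has $s_Y^{G,E} n \in S(G)$ and $s_X^{E,G} s_Y^{G,E} n = \sigma(X,Y)\, n \neq 0$. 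Second, your assertion that the cocycle value is $\ell(1) = 1$ because ``the $\bullet$-bottom groups vanish'' is false: the thorax condition forces ${}_\bullet X = 1$ only on the $E$-side, whereas on the $G$-side one has $X_\bullet = Y = {}_\bullet Y$, so $\sigma(X,Y) = \ell(Y)$. The argument survives only because $\ell$ takes values in $\kay^\times$; this is precisely the sensitivity the paper flags in the remark following Theorem \ref{3.6}, where it notes that the proof of this lemma breaks down if $\ell$ is allowed to vanish.

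Your forward direction is more explicit than the paper's, which simply declares it part of the characterization coming from Theorem \ref{3.2} and [BD16]. Your version is sound, but note that the step ``$[E] \leq [T]$ by minimality'' is incomplete as stated: since the ordering on isomorphism classes is only partial, you must first observe that $[T] = [\Theta(U)] \leq [E]$ (the thorax of $U \in \cS(E,G)$ is a subquotient of the codomain $E$), after which minimality yields $[T] = [E]$ and hence $[E] = [T] \leq [G]$.
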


\begin{proof}
In one direction, the conclusion is
already part of the characterization
of $S$. Conversely, suppose $G$ has
a subquotient $B/Y$ isomorphic to
$E$. Replacing $E$ with an isomorphic
copy, we may assume that $E \in \cK$.
Let $x \in S(E) - \{ 0 \}$. Let
$\phi : E/1 \leftarrow B/Y$ be an
isomorphism. Let
$I = \Delta(E, 1, \phi, Y, B)$ and
$J = \Delta(B, Y, \phi^{-1}, 1, E)$.
Then $I * J = \Delta(E)$ and
$\sigma(I, J) = \ell(Y)$, so
$$s_I^{E, G} s_J^{G, E} x
  = \ell(Y) s_{\Delta(E)}^{E, E} x
  = \ell(Y) x \neq 0 \; .$$
So the element $s_J^{G, E} x \in
S(G)$ is non-zero.
\end{proof}

Now let $\cK$ be arbitrary. Given
an $\cS$-seed $(E, W)$ for $\cK$,
we define $S_{E, W}$ to be the
simple $\Lambda$-module,
well-defined up to isomorphism,
defined as follows. Choose any
set $\cK'$ of finite groups such
that $\cK \subseteq \cK'$ and
$\cK'$ is closed under subquotients
up to isomorphism. Let $\Lambda'
= \Lambda_{\cK'}$. Since
$(E, W)$ is an $\cS$-seed for $\cK'$,
there exists an isomorphically unique
simple $\Lambda'$-module
$S'$ corresponding to $(E, W)$ as
in Theorem \ref{3.2}. We define
$S_{E, W} = \Lambda S'$, which
is a simple $\Lambda$-module by
Proposition \ref{2.1} and
Lemma \ref{3.3}. To check that
$S_{E, W}$ is independent of the
choice of $\cK'$, let $\cK''$ be
another such set of finite groups.
Write $\cK''' = \cK' \cup \cK''$.
Let $S''$ and $S'''$ be the simple
modules defined in the same way
as $S'$ but with $\cK''$ and
$\cK'''$, respectively, in place of
$\cK'$. By considering a diamond
diagram of corner embeddings, we
deduce that $\Lambda S' \cong
\Lambda S''' \cong \Lambda S''$. The
well-definedness is now established.
We say that $S_{E, W}$ has
{\bf $\cS$-seed} $(E, W)$ and
{\bf minimal group} $E$. We say
that $(E, W)$ and $E$ are
{\bf associated} with $S_{E, W}$.

Let us point out that the
$\Lambda$-modules $\Lambda S'$,
$\Lambda S''$, $\Lambda S'''$ can
be viewed as restrictions. Indeed, in
the scenario of Proposition \ref{2.1},
we can make an identification
$BN = {}_B \Res {}_A (N)$ for any
$A$-module $N$. The next remark
consolidates an observation that is
already implicit above.

\begin{rem}
\label{3.4}
Let $(E, W)$ be an $\cS$-seed for $\cK$.
Let $\cL \subseteq \cK$ such that $E$
is a factor group of a group in $\cL$.
Then $(E, W)$ is an $\cS$-seed for $\cL$
and the restricted $\Lambda_\cL$-module
$\Lambda_\cL . S_{E, W}$ has $\cS$-seed
$(E, W)$.
\end{rem}

Let us emphasize that, in the next result,
$\cK$ is arbitrary. In particular, we do
not require $\cK$ to own isomorphic
copies the minimal groups associated
with the simple
$\Lambda$-modules. This is in contrast
to the situation for some other kinds of
group functors, where work without
such an ownership condition tends to
be very difficult. Several examples in
Bouc--Stancu--Th\'{e}venaz
\cite[Section 13]{BST13} show that, for
biset functors, there are no direct
analogues of the latest lemma or the
two theorems below in this section. See
also the discussion of the ``vanishing
problem'' in Rognerud \cite{Rog19}.

\begin{thm}
\label{3.5}
The condition $S \cong S_{E, W}$
characterizes a bijective
correspondence $[S] \leftrightarrow
[E, W]$ between the isomorphism
classes $[S]$ of simple
$\Lambda$-modules $S$ and the
equivalence classes $[E, W]$ of
$\cS$-seeds $(E, W)$ for $\cK$.
\end{thm}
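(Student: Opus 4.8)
The plan is to establish the two directions of the correspondence separately: first that $S_{E,W}$ is well-defined and has $\cS$-seed $(E,W)$, second that every simple $\Lambda$-module arises this way and that the assignment is injective on equivalence classes. The well-definedness of $S_{E,W}$ has already been carried out in the paragraph preceding Remark \ref{3.4}, using a diamond of corner embeddings, so I may take for granted that, for an $\cS$-seed $(E,W)$ for $\cK$, the module $S_{E,W}$ is an isomorphically unique simple $\Lambda$-module, obtained as $\Lambda S'$ where $S'$ is the simple $\Lambda'$-module of Theorem \ref{3.2} for any set $\cK' \supseteq \cK$ closed under subquotients up to isomorphism. What remains for the "onto" part is: given a simple $\Lambda$-module $S$, produce an $\cS$-seed $(E,W)$ for $\cK$ with $S \cong S_{E,W}$. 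First I would enlarge $\cK$ to a set $\cK'$ closed under subquotients up to isomorphism; by Proposition \ref{2.1} applied to the corner subalgebra $\Lambda = \Lambda_\cK \leq \Lambda' = \Lambda_{\cK'}$, there is a simple $\Lambda'$-module $S'$ with $\Lambda S' \neq 0$ and $\Lambda S' \cong S$. Theorem \ref{3.2} (applicable to $\cK'$) supplies an $\cS$-seed $(E,W)$ for $\cK'$ with $E \in \cK'$, minimal group $E$, and $W \cong \Res^{\mu_E}(S'(E))$. The point is that $E$ must actually be a factor group of some element of $\cK$: since $\Lambda S' \neq 0$, there is $G \in \cK$ with $\id_G^\Lambda S' = S'(G) \neq 0$, and Lemma \ref{3.3} (applied over $\cK'$) then forces $E$ to be a factor group of $G$. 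Hence $(E,W)$ is an $\cS$-seed for $\cK$, and by construction $S_{E,W} = \Lambda S' \cong S$.

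For injectivity, suppose $S_{E,W} \cong S_{E',W'}$ as $\Lambda$-modules; I must show $(E,W)$ and $(E',W')$ are equivalent $\cS$-seeds for $\cK$. Choose a single set $\cK'$ of finite groups containing $\cK$, closed under subquotients up to isomorphism, and large enough that $E, E' \in \cK'$; let $\Lambda' = \Lambda_{\cK'}$. By the well-definedness argument (the diamond of corner embeddings), $S_{E,W} = \Lambda S'$ and $S_{E',W'} = \Lambda S''$ where $S'$, $S''$ are the simple $\Lambda'$-modules corresponding under Theorem \ref{3.2} to $(E,W)$, $(E',W')$ respectively. By Proposition \ref{2.1}, the correspondence $[T] \leftrightarrow [S]$ between simple $\Lambda$-modules $T$ (with the given corner structure) and simple $\Lambda'$-modules $S$ with $\Lambda S \neq 0$ is a bijection; in particular $\Lambda S' \cong \Lambda S''$ implies $S' \cong S''$ as $\Lambda'$-modules. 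But Theorem \ref{3.2} asserts that $[S'] \leftrightarrow [E,W]$ and $[S''] \leftrightarrow [E',W']$ is a genuine bijection between isomorphism classes of simple $\Lambda'$-modules and equivalence classes of $\cS$-seeds for $\cK'$, so $S' \cong S''$ forces $(E,W)$ and $(E',W')$ to be equivalent as $\cS$-seeds for $\cK'$. Since equivalence of $\cS$-seeds is detected by the existence of a group isomorphism $\theta : E \leftarrow E'$ with $W \cong \Iso^\theta(W')$—a condition that makes no reference to the ambient set of groups—this is exactly equivalence as $\cS$-seeds for $\cK$.

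Finally I should check that the map is well-defined on equivalence classes, i.e.\ that equivalent $\cS$-seeds give isomorphic modules: if $\theta : E \leftarrow E'$ is a group isomorphism with $W \cong \Iso^\theta(W')$, then inside a suitable $\Lambda'$ the modules $S'$ and $S''$ of Theorem \ref{3.2} coincide up to isomorphism (the characterization in that theorem is invariant under replacing $(E,W)$ by an equivalent seed), whence $S_{E,W} = \Lambda S' \cong \Lambda S'' = S_{E',W'}$. The main obstacle, I expect, is purely bookkeeping: making sure the auxiliary set $\cK'$ is chosen once and used consistently for both seeds and both modules, so that all four of Proposition \ref{2.1}, Lemma \ref{3.3}, Theorem \ref{3.2}, and the diamond argument for well-definedness are being applied over the same category; there is no new conceptual difficulty beyond assembling these ingredients in the right order.
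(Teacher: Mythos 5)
Your proof is correct and follows essentially the same route as the paper's (very terse) proof: reduce to a set $\cK'$ closed under subquotients, invoke Theorem \ref{3.2} to classify simple $\Lambda'$-modules, then use Proposition \ref{2.1} for the corner embedding $\Lambda \hookrightarrow \Lambda'$ together with Lemma \ref{3.3} to pass the classification down to $\Lambda$. Your write-up just spells out the bookkeeping (surjectivity, injectivity, well-definedness on equivalence classes) that the paper leaves implicit.
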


\begin{proof}
Let $\cK'$ and $\Lambda'$ be as
defined just above. Theorem
\ref{3.2} describes the simple
$\Lambda'$-modules. The argument
is completed by applying Proposition
\ref{2.1} to the corner embedding
$\Lambda' \hookleftarrow \Lambda$
and then making use of Lemma
\ref{3.3} and Remark \ref{3.4}.
\end{proof}

The next theorem describes the same
correspondence in a more intrinsic
way, without mentioning any
extension of the set of objects. Let
$E$ be a finite group, let $\cK \ni G
\geq B \unrhd Y$ and let $\phi : E
\leftarrow B/Y$ be an isomorphism.
We write
$$\mu_\phi \: : \: \End_\Lambda(G)
  \leftarrow \kay \Aut(E)$$
for the algebra monomorphism such
that, given $\epsilon \in \Aut(E)$, then
$$\mu_\phi(\epsilon) = \ell(Y)^{-1}
  s_{\Delta(B, Y, \phi^{-1} \epsilon
  \phi, Y, B)}^{G, G} \; .$$

\begin{thm}
\label{3.6}
The correspondence in the previous
theorem is characterized as follows.
Given a simple $\Lambda$-module $S$
then, up to equivalence, the $\cS$-seed
$(E, W)$ for $\cK$ associated with
$S$ is determined by the following three
conditions:

\noin {\bf (a)} For any $G \in \cK$, we
have $S(G) \neq 0$ if and only if
$[E] \leq [G]$.

\noin {\bf (b)} Given $\cK \ni G
\geq B \unrhd Y$, an isomorphism
$\phi : E \leftarrow B/Y$ and an
idempotent $k$ of $\kay \Aut(E)$,
then $\mu_\phi(k) S(G) \neq 0$ if
and only if $kW \neq 0$.

\noin {\bf (c)} The isomorphism class
$[E]$ is minimal in the sense that, given
an $\cS$-seed $(E', W')$ for $\cK$
satisfying conditions (a) and (b),
then $[E] \leq [E']$.
\end{thm}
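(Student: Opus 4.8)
The plan is to deduce Theorem~\ref{3.6} from Theorem~\ref{3.5} together with Lemma~\ref{3.3}, by showing that conditions (a), (b), (c) are exactly the intrinsic reformulation of the characterization built into the definition of $S_{E,W}$. Throughout, let $(E,W)$ be the $\cS$-seed associated with $S$ as in Theorem~\ref{3.5}, so $S \cong S_{E,W}$, and let $\cK'$, $\Lambda'$, $S'$ be as in the construction preceding Remark~\ref{3.4}, so $S = \Lambda S'$ and $S(G) = \Res_{\Lambda}^{\,}\bigl(S'(G)\bigr)$ for $G \in \cK$. First I would dispose of (a): this is simply Lemma~\ref{3.3} applied to $\cK'$ (which is closed under subquotients up to isomorphism), combined with the identification $S(G) = {}_\Lambda\Res\,S'(G)$, which gives $S(G) \neq 0 \iff S'(G) \neq 0 \iff [E] \leq [G]$.

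Next I would verify (b). Fix $\cK \ni G \geq B \unrhd Y$, an isomorphism $\phi : E \leftarrow B/Y$, and an idempotent $k \in \kay\Aut(E)$. The key computational point is a pair of factorizations in the square basis: writing $I = \Delta(E,1,\phi,Y,B) \in \cS(E,G)$ and $J = \Delta(B,Y,\phi^{-1},1,E) \in \cS(G,E)$ as in the proof of Lemma~\ref{3.3}, one has $I * J = \Delta(E)$ with $\sigma(I,J) = \ell(Y)$, and moreover $J * \Delta(\epsilon) * I = \Delta(B,Y,\phi^{-1}\epsilon\phi,Y,B)$ for $\epsilon \in \Aut(E)$, with the cocycle values chosen so that $s_J^{G,E}\, s_{\Delta(\epsilon)}^{E,E}\, s_I^{E,G} = \ell(Y)\, s_{\Delta(B,Y,\phi^{-1}\epsilon\phi,Y,B)}^{G,G}$; this is precisely the content of the definition of $\mu_\phi$. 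Consequently $\mu_\phi(\epsilon)$ acting on $S(G)$ corresponds, under the ``sandwich'' maps $s_I^{E,G} : S(G) \to S(E)$ and $\ell(Y)^{-1} s_J^{G,E} : S(E) \to S(G)$, to the action of $\epsilon$ on $S(E)$; extending $\kay$-linearly, $\mu_\phi(k)$ acts on $S(G)$ through the action of $k$ on $S(E)$, with $s_I^{E,G}$ and $\ell(Y)^{-1}s_J^{G,E}$ mutually inverse isomorphisms $S(E) \leftrightarrow \mu_\phi(1)S(G)$ (up to the idempotent $\mu_\phi(1)$). Since $S(E) = \Res^{\mu_E}(S(E)) = W$ as a $\kay\Aut(E)$-module (by the characterization in Theorem~\ref{3.2}, which applies to $S'$ over $\cK'$ and descends to $S$), we get $\mu_\phi(k)S(G) \neq 0 \iff kW \neq 0$. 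Here I should take care that the relevant $s_I^{E,G}, s_J^{G,E}$ genuinely restrict to maps among the $S(G)$'s inside $S$ — this is immediate because $E \in \cK'$ and $S = \Lambda S'$ with these morphisms lying in $\Lambda'$.

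Finally, for (c), I would argue that among all $\cS$-seeds $(E',W')$ for $\cK$ satisfying (a) and (b), the class $[E]$ is minimal. By Theorem~\ref{3.5}, $(E',W')$ corresponds to some simple $\Lambda$-module $S_{E',W'}$, and condition (a) for $(E',W')$ forces $S_{E',W'}(G) \neq 0 \iff [E'] \leq [G]$; comparing with (a) for $(E,W)$ one sees $[E]$ and $[E']$ have the same downsets among elements of $\cK$. The genuine uniqueness then comes from observing that $(E,W)$ itself satisfies (a) and (b) — (b) for $(E,W)$ is exactly what was proved in the previous paragraph — and that any seed satisfying (a) and (b) must, by Theorem~\ref{3.5} applied after enlarging $\cK$ to a set closed under subquotients, recover $W$ from $S(E)$ via $\mu_E$ once we know $[E'] \leq [E]$; minimality of $[E]$ is then the statement that one cannot have $[E'] < [E]$ with $(E',W')$ still satisfying (a), since (a) would fail for any group $G \in \cK$ with $[E] \leq [G]$ but $[E'] \not\leq [G]$ — no, more carefully, minimality is built in because $E$ is by construction a subquotient realized in $S$, and condition (b) at $G = E'$-realizing subquotients pins down that $W' \neq 0$ requires $[E] \leq [E']$.

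The step I expect to be the main obstacle is the cocycle bookkeeping in (b): verifying that the double factorization $J * \Delta(\epsilon) * I = \Delta(B,Y,\phi^{-1}\epsilon\phi,Y,B)$ carries exactly the cocycle factor $\ell(Y)$ (and not $\ell(Y)^2$ or some other correction), so that $\mu_\phi$ as defined really is an algebra homomorphism and the sandwich operators are mutually inverse on the image of the idempotent $\mu_\phi(1)$. This requires computing $\sigma(J,\Delta(\epsilon))$, $\sigma(J * \Delta(\epsilon), I)$ and checking their product against $\sigma(I,J) = \ell(Y)$ using Proposition~\ref{2.9} and the explicit description of $U_\bullet \cap {}_\bullet V$ for these particular diagonal-type subgroups; it is entirely routine but is the one place where an error would be easy to make, and it is the linchpin that makes $\mu_\phi$ well-defined and hence makes condition (b) meaningful.
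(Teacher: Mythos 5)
Your overall strategy is the same as the paper's: reduce to the case $E \in \cK$ via Proposition \ref{2.1} and Remark \ref{3.4}, then verify conditions (a), (b), (c) directly, with uniqueness of the seed being automatic once all three conditions are known to hold for $(E,W)$. Your treatments of (a) and (b) are correct and match the paper, including the two sandwich identities $s_I \mu_\phi(\epsilon) s_J = \ell(Y)\mu_E(\epsilon)$ and $s_J \mu_E(\epsilon) s_I = \ell(Y)\mu_\phi(\epsilon)$. (Your parenthetical worry that the cocycle bookkeeping in (b) is the "main obstacle" is misplaced: those two identities are routine applications of Proposition \ref{2.5}, and the paper states them without fuss. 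The subtler part of the whole proof is (c).)

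Your argument for (c), however, contains a genuine error and is not repaired cleanly. First, you write that "condition (a) for $(E', W')$ forces $S_{E', W'}(G) \neq 0 \iff [E'] \leq [G]$" — this is not what (a) says; (a) is a condition on the fixed simple module $S$, not on $S_{E', W'}$, so the correct reading is $S(G) \neq 0 \iff [E'] \leq [G]$. Second, your first stab at minimality ("one cannot have $[E'] < [E]$ with $(E', W')$ still satisfying (a), since (a) would fail for any group $G \in \cK$ with $[E] \leq [G]$ but $[E'] \not\leq [G]$") is backwards: if $[E'] < [E]$, then $[E] \leq [G]$ automatically entails $[E'] \leq [G]$, so no such $G$ exists and (a) is never violated this way. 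You do signal awareness of the problem ("no, more carefully...") and your corrected sentence gestures at the right idea, but it does not state the actual argument. The clean argument for (c) runs through condition (b): take $k = 1$, so (b) for $(E', W')$ gives $\mu_{\phi'}(1) S(G) \neq 0$ for some $\cK \ni G \geq B' \unrhd Y'$ with $B'/Y' \cong E'$. Since $\mu_{\phi'}(1)$ is a nonzero scalar times $s_{\Delta(B',Y',\id,Y',B')}^{G,G}$, which has thorax $\cong E'$ and hence (by Lemma \ref{2.10}, after enlarging to a $\cK'$ containing an isomorphic copy of $E'$) factors as $s_X^{G,E'} s_Z^{E',G}$, one deduces $S'(E') \neq 0$ for the corresponding $\Lambda_{\cK'}$-module $S'$. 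Theorem \ref{3.2} then forces $[E] \leq [E']$. Without this factorization-through-$E'$ step your (c) is incomplete.
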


\begin{proof}
By Proposition \ref{2.1} and Remark
\ref{3.4}, we may assume that $S$ has
$\cS$-seed $(E, W)$ and that $E \in \cK$.
It suffices to deduce conditions (a), (b), (c).
Lemma \ref{3.3} yields condition (a) which,
together with Theorem \ref{3.2}, implies
condition (c). Let
$I = \Delta(E, 1, \phi, Y, B)$ and
$J = \Delta(B, Y, \phi^{-1}, E, 1)$.
Given $\epsilon \in \Aut(E)$, then
$$s_I^{E, G} \mu_\phi(\epsilon) s_J^{G, E}
  = \ell(Y) \mu_E(\epsilon) \; .$$
By $\kay$-linearity, the same equality
holds with $k$ in place of $\epsilon$.
We have $kW \neq 0$ if and only if
$\mu_E(k) S(E) \neq 0$. In that case,
$\mu_\phi(\epsilon) s_J^{G, E} S(E)
\neq 0$, hence $\mu_\phi(\epsilon)
S(G) \neq 0$. Conversely, suppose
$S(G) \neq 0$. Noting that
$$s_J^{G, E} \mu_E(\epsilon) s_I^{E, G}
  = \ell(Y) \mu_\phi(\epsilon) \; .$$
and arguing as before, we obtain
$kW \neq 0$. We have deduced
condition (b).
\end{proof}

Let us mention that, if we were to
generalize by allowing $\ell$ to be
any homomorphism of multiplicative
monoids $\kay^\times \leftarrow \NN
- \{ 0 \}$ then, adopting a suitable
extension of the notion of a twisted
category algebra, the discussion in
this section would carry through,
up to and including Theorem
\ref{3.2}, but the proof of
Lemma \ref{3.3} would no longer
be valid. We do not know of any
examples for which the conclusion
of that lemma would fail.

Recall, for a ring $A$, a primitive
idempotent of the centre $Z(A)$ is
called a {\bf block} of $A$. When
$A$ is unital and semisimple, the
blocks of $A$ are in a bijective
correspondence with the simple
$A$-modules up to isomorphism,
each block corresponding to the
isomorphically unique simple module
of the associated block algebra.
If $\cK$ is finite then, for each
$\cS$-seed $(E, W)$ for $\cK$, we
let $b_{E, W}$ denote the block of
$\Lambda$ that acts as the identity
on $S_{E, W}$.

\begin{example}
\label{3.7} Let $q$ be a prime and
$\cK = \{ G \}$ with $G \cong C_q$, the
cyclic group with order $q$. Suppose
$\kay$ has a root of unity with order
$q - 1$. Then the algebra $\Lambda =
\End_\Lambda(G)$ has a $\kay$-basis
consisting of the elements
$$s_0 = s_{1 \times 1}^{G, G} \; , \;\;\;\;
  s_{01} = s_{1 \times G}^{G, G} \; , \;\;\;\;
  s_{10} = s_{G \times 1}^{G, G} \; , \;\;\;\;
  s_{11} = s_{G \times G}^{G, G} \; , \;\;\;\;
  s_d = s_{\Delta(d)}^{G, G}$$
where $\Delta(d) = \{ (g^d, g) : g \in G \}$
and $d$ runs over the elements of the
unit group $(\ZZ / q)^\times$ of the ring
$\ZZ / q$ of modulo $q$ congruence
classes. The $\cS$-seeds for $\cK$ are
$(1, 1)$, $(G, \zeta)$, $(G, \chi)$ where
$\zeta$ is a trivial $\kay \Aut(G)$-module
and $\chi$ runs over representatives of the
$q - 2$ isomorphism classes of non-trivial
simple $\kay \Aut(G)$-modules, vacuously
when $q = 2$. Write $\lambda = \ell(q)$
and $r = \lambda s_0 - s_{01} - s_{10}
+ s_{11}$.

\noin {\bf (1)} If $\lambda = 1$, then
$\kay r$ is a nilpotent ideal of
$\Lambda$. In particular, $\Lambda$ is
not semisimple.

\noin {\bf (2)} Suppose $\lambda
\neq 1$. Then $\Lambda$ is semisimple.
Identifying each $\chi$ with the
associated irreducible character, the
blocks of $\Lambda$ are
$$b_{1,1} = \frac{r}{\lambda-1} \; ,
  \;\;\;\;\;\; b_{G, \zeta} =
  \frac{-r}{\lambda - 1} + \frac{1}
  {q - 1} \sum_d s_d \; , \;\;\;\;\;\;
  b_{G, \chi} = \frac{1}{q - 1}
  \sum_d \chi(d^{-1}) s_d \; .$$
We have $\Lambda b_{1, 1} \cong
\Mat_2(\kay)$ and $\Lambda
b_{G, \zeta} \cong \Lambda
b_{G, \chi} \cong \kay$.
\end{example}

\begin{proof}
This exercise in laborious but
straightforward calculation can be
done in many different ways. Let us
sketch a fairly quick route. Suppose
$\lambda \neq 1$. It is easy to check
that the regular $\Lambda$-module
has a submodule $S$ with
$\kay$-basis $\{ s_0, s_{10} \}$ and
representation given, with respect to
that basis, by
$$\openmat 1 & 1 \\ 0 & 0 \closemat
  \mapsfrom s_0 \, , \;\;
  \openmat 1 & \lambda \\ 0 & 0 \closemat
  \mapsfrom s_{01} \, , \;\;
  \openmat 0 & 0 \\ 1 & 1 \closemat
  \mapsfrom s_{10} \, , \;\;
  \openmat 0 & 0 \\ 1 & \lambda \closemat
  \mapsfrom s_{11} \, , \;\;
  \openmat 1 & 0 \\ 0 & 1 \closemat
  \mapsfrom s_d \, .$$
Plainly, $S$ is simple and
$\End_\Lambda(S) \cong \kay$. By
Theorem \ref{3.5}, $\Lambda$ has
exactly $q$ simple modules up to
isomorphism. By counting dimensions,
$\Lambda$ is semisimple. Using the
above matrices, it is easy to show
that $Z(\Lambda)$ has a basis
consisting of $r$ and the elements
$s_d$. We have $b_{1, 1} \in
Z(\Lambda) \cap \Lambda_< =
\kay r$. The formula for $b_{1, 1}$
follows easily.

Let $\psi$ run over all $q - 1$ of the
irreducible characters of $\kay \Aut(G)$.
Define $b_\psi = \sum_d \psi(d^{-1})
s_d$. Direct calculations show that
$\sum_\psi b_\psi = 1$ as a sum of
mutually orthogonal idempotents of
$Z(\Lambda)$. By considering actions
on simple $\kay \Aut(G)$-modules and
noting that $b_{1, 1} b_\zeta = b_{1, 1}$,
we deduce that $b_{1, 1} + b_{G, \zeta} =
b_\zeta$ and each $b_{G, \chi} = b_\chi$.
\end{proof}

Generally, we can identify $\Lambda$ with
$\kay \cS_\cK$ as $\kay$-modules by
identifying each $s_U^{F, G}$ with the
element $U \in \cS(F, G)$. In fact, that
identification of underlying $\kay$-modules
is already implicit in our construction of
$\Lambda$. The example shows that the
isomorphism in Theorem \ref{1.2} can
depend on $\ell$. Indeed, in the
notation of the example, putting
$\lambda \neq 1$ then, as
$\kay$-submodules of $\kay \cS_\cK$,
the block algebras $\Lambda b_{1, 1}$
and $\Lambda b_{G, \chi}$ are
independent of $\ell$, but the block
algebra $\Lambda b_{G, \zeta}$ does
depend on $\ell$. Moreover, again as
$\kay$-submodules of $\kay \cS_\cK$,
the first summand on the right-hand side of
the algebra isomorphism
$$\Lambda \cong \Mat_2(\kay \Aut(1))
  \oplus \Mat_1(\kay \Aut(G))$$
is constant, but the other summand
varies.

\section{Implications of semisimplicity}

In this section, we prove
Theorem \ref{1.2}.

For convenience, we constrain the notion
of a $\Lambda$-module by deeming that,
for any $\Lambda$-module $M$, the
$\kay$-module $M(G)$ is
finite-dimensional for all $G \in \cK$,
equivalently, $eM$ is finite-dimensional
for all idempotents $e$ of $\Lambda$.
The condition ensures that every simple
factor of $M$ has a well-defined finite
multiplicity.
Given $G \in \cK$, then the module
$M(G)$ of $\End_\Lambda(G)$ has
submodule $\End_\Lambda(G)_< .
M(G)$ and we can define a
$\kay \Aut(G)$-module
$$\oo{M}(G) =
  \Res^{\mu_G} (M(G)
  / \End_\Lambda(G)_< . M(G)) \; .$$
Given an $\cS$-seed $(E, W)$ for
$\cK$, we let $m_{E, W}(M)$ denote the
multiplicity of $S_{E, W}$ as a simple
factor of $M$. When $\Lambda$ is
locally semisimple,
$$M \cong \bigoplus
  m_{E, W}(M) \, S_{E, W}$$
summed over representatives
$(E, W)$ of the equivalence classes
of $\cS$-seeds for $\cK$.

\begin{lem}
\label{4.1}
Suppose $\Lambda$ is locally
semisimple. Let $(E, W)$ be an
$\cS$-seed for $\cK$ such that
$E \in \cK$. Let $M$ be a
$\Lambda$-module. Then
$m_{E, W}(M)$ is the multiplicity
of $W$ in $\oo{M}(E)$.
\end{lem}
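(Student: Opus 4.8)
The plan is to compute $\oo{M}(E)$ via the direct sum decomposition $M \cong \bigoplus_{(E', W')} m_{E', W'}(M)\, S_{E', W'}$ furnished by local semisimplicity, and then show that the only summand $S_{E', W'}$ contributing to $\oo{M}(E)$ is the one with $(E', W') = (E, W)$, and that it contributes exactly one copy of $W$. Since the functor $\oo{(-)}(E)$ is additive, it suffices to prove the single-summand claim: for any $\cS$-seed $(E', W')$ for $\cK$, the $\kay\Aut(E)$-module $\oo{S_{E', W'}}(E)$ is isomorphic to $W$ if $(E', W')$ is equivalent to $(E, W)$, and is zero otherwise.

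First I would dispose of the generic case where $(E', W')$ is not equivalent to $(E, W)$. Write $S = S_{E', W'}$. If $S(E) = 0$ there is nothing to prove, so assume $S(E) \neq 0$; by Theorem \ref{3.6}(a) this forces $[E'] \leq [E]$. If $[E'] < [E]$, then by Corollary \ref{2.7} the identity morphism aside, every element of $\cS(E, E)$ with $S(E) \neq 0$ already lies in $\cS(E,E)_<$ once we factor through $E'$; more precisely, by Lemma \ref{2.10} every generator $s_U^{E,E}$ with $U$ such that $S(E)$ is not annihilated actually lies in $\End_\Lambda(E)_<$ because $E'$ is a factor group of $E$ strictly smaller than $E$, so $E$ acts through subquotients and $S(E) = \End_\Lambda(E)_< . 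S(E)$, giving $\oo{S}(E) = 0$. If instead $[E'] = [E]$ but $W' \not\cong \Iso^\theta(W)$ for any isomorphism $\theta : E \leftarrow E'$, then after identifying $E$ with $E'$ (using that $(E,W)$ and $(E',W')$ are seeds for the same $\cK$ and minimality of $[E]$ from Theorem \ref{3.6}(c)) we are comparing two distinct simple $\kay\Aut(E)$-modules, and the characterization in Theorem \ref{3.6}(b), applied with $G = E$, $B = E$, $Y = 1$, $\phi = \id_E$, shows that an idempotent $k$ of $\kay\Aut(E)$ satisfies $k\,S(E) \neq 0$ iff $kW' \neq 0$, whence $\oo{S}(E) \cong \Res^{\mu_E}(S(E)) $ has the same primitive-idempotent support as $W'$, so $\oo{S}(E)$ is a direct sum of copies of $W'$, not $W$.

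It remains to handle $(E', W') = (E, W)$, i.e.\ to show $\oo{S_{E,W}}(E) \cong W$ (a single copy). Here the key input is the construction of $S_{E,W}$ via Theorem \ref{3.2}: passing to a set $\cK'$ closed under subquotients, $S_{E,W} = \Lambda S'$ where $S'$ is the simple $\Lambda_{\cK'}$-module with $[E]$ minimal subject to $S'(E) \neq 0$ and $W \cong \Res^{\mu_E}(S'(E))$. By Lemma \ref{2.2} applied to the corner embedding $\Lambda \hookleftarrow \Lambda'$, we have $S_{E,W}(E) = \id_E . \Lambda S' = \Lambda' S' $ evaluated at $E$, which equals $S'(E)$ since $\id_E \in \Lambda$; and minimality of $[E]$ together with Corollary \ref{2.6} gives $\End_\Lambda(E)_< . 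S'(E) = 0$, because any $s_U^{E,E}$ with $[\Theta(U)] < [E]$ acts on $S'(E)$ through a group strictly below $[E]$, on which $S'$ vanishes. Therefore $\oo{S_{E,W}}(E) = \Res^{\mu_E}(S'(E)) \cong W$, as required. The main obstacle is the bookkeeping in this last paragraph: one must be careful that the evaluation $S_{E,W}(E)$ computed inside $\Lambda$ agrees with $S'(E)$ computed inside $\Lambda'$, and that the quotient by $\End_\Lambda(E)_< . S_{E,W}(E)$ is trivial — both follow from minimality of $E$ and Corollaries \ref{2.6}, \ref{2.7}, but the argument needs the factorization Lemma \ref{2.10} to see that $\End_\Lambda(E)_<$ acts through strictly smaller factor groups.
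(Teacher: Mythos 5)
Your overall strategy---decompose $M$ into simples by local semisimplicity and compute $\oo{S_{E',W'}}(E)$ case by case---is essentially the paper's approach (the paper phrases it as ``we may assume $M$ is simple,'' but it is the same thing). The cases $(E',W')$ equivalent to $(E,W)$ and $[E']=[E]$, $W'\not\cong W$ are handled correctly. However, the case $[E']<[E]$ has a genuine gap.

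You assert that when $[E']<[E]$ every generator of $\End_\Lambda(E)$ acting nontrivially on $S(E)$ ``already lies in $\End_\Lambda(E)_<$,'' and from this conclude $S(E)=\End_\Lambda(E)_<.S(E)$. This reasoning does not hold: $\End_\Lambda(E)_<$ is a proper ideal of $\End_\Lambda(E)$, and the identity $\id_E^\Lambda=s_{\Delta(E)}^{E,E}$ is not in it but certainly does not annihilate $S(E)$. Lemma~\ref{2.10} and Corollary~\ref{2.7} say a morphism factors through its \emph{thorax}, but that gives no reason for $\Delta(E)$---whose thorax is $E$ itself---to be expressible as a product passing through $\End_\Lambda(E)_<$. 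What is actually needed, and what the paper supplies, is two separate facts: (i) $S(E)$ is a \emph{simple} $\End_\Lambda(E)$-module, by Proposition~\ref{2.1} applied to the corner subalgebra $\End_\Lambda(E)\subseteq\Lambda$; and (ii) $\End_\Lambda(E)_<.S(E)\neq 0$, which follows from Theorem~\ref{3.6}(b) once one observes that, for $[E']<[E]$, the images $\mu_\phi(\epsilon')$ lie in $\End_\Lambda(E)_<$ (their thorax is $\cong E'$) and one of them acts nontrivially on $S(E)$. Combining (i) and (ii), the nonzero submodule $\End_\Lambda(E)_<.S(E)$ must be all of $S(E)$, so $\oo{S}(E)=0$. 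Your argument invokes neither the simplicity of $S(E)$ over $\End_\Lambda(E)$ nor the nonvanishing $\End_\Lambda(E)_<.S(E)\neq 0$; without those the claim in this case is unsupported.
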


\begin{proof}
Write $\cE = \End_\Lambda(E)$. We
may assume that $M$ is simple.
Let $(E', W')$ be an $\cS$-seed for
$\cK$ associated with $M$. If
$\oo{M}(E) = 0$, then $[E', W'] \neq
[E, W]$ and $m_{E, W} = 0$. So we
may assume that $\oo{M}(E) \neq 0$.
By condition (c) of Theorem \ref{3.6},
$[E'] \leq [E]$. Let $E \geq B \unrhd Y$
and let $\phi : E' \leftarrow B/Y$ be an
isomorphism. By condition (b) of the
same theorem, $\mu_\phi(\epsilon')
M(E) \neq 0$ for some $\epsilon' \in
\Aut(E')$. For a contradiction, suppose
$[E'] < [E]$. The proof of the theorem
shows that $\mu_\phi(\epsilon')
\in \cE_<$. Therefore,
$\cE_< . M(E) \neq 0$. But, applying
Proposition \ref{2.1} to the corner
subalgebra $\cE$ of $\Lambda$,
we deduce that $M(E)$ is a simple
$\cE$-module. Therefore
$\cE_< . M(E) = M(E)$, contradicting
an assumption on $M$. We have
shown that $M$ has minimal group
$E$. That reduces to the case
$\cK = \{ E \}$, for which the
required conclusion is clear.
\end{proof}

Now letting $(E, W)$ be any
$\cS$-seed for $\cK$, we let $m_W$
denote the multiplicity of $W$ in the
regular $\kay \Aut(E)$-module. Of
course, when $\kay$ is algebraically
closed, $m_W = \dim_\kay(W)$. For
$G \in \cK$, we let $n_E^G$ denote
the number of subquotients of $G$
isomorphic to $E$.

\begin{lem}
\label{4.2}
Suppose $\Lambda$ is locally
semisimple. Given $G \in \cK$ and
an $\cS$-seed $(E, W)$ for $\cK$,
then $m_{E, W}(\Lambda .
\id_G^\Lambda) = n_E^G m_W$.
\end{lem}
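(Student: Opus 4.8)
The strategy is to reduce to the case in which $\cK$ is closed under subquotients up to isomorphism, and then to filter the projective module $\Lambda\id_G^\Lambda$ by the possible isomorphism types of the thorax of a morphism into $G$, computing the multiplicity contributed by each layer by means of Lemma~\ref{4.1}.

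For the reduction, let $\cK'$ be the closure of $\cK$ under subquotients up to isomorphism and put $\Lambda' = \Lambda_{\cK'}$, so that $\Lambda = e\Lambda'e$ with $e = \sum_{F\in\cK}\id_F$. Whenever a group $T$ is isomorphic to a subquotient $B/Y$ of some $G_0\in\cK$, the morphisms $I = \Delta(T,1,\phi,Y,B)$ and $J = \Delta(B,Y,\phi^{-1},1,T)$ satisfy $s_I s_J = \ell(Y)\id_T$ (exactly as in the proof of Lemma~\ref{3.3}), so $\id_T\in\Lambda' e\Lambda'$; hence $e$ is a full idempotent of $\Lambda'$, and therefore $\Lambda'$ is Morita equivalent to its corner $\Lambda$ and is in particular locally semisimple. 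Since $\Lambda\id_G^\Lambda = e(\Lambda'\id_G^{\Lambda'})$ is the restriction along the corner embedding $\Lambda\hookrightarrow\Lambda'$ of a finite-length $\Lambda'$-module, Proposition~\ref{2.1} applied to composition factors gives $m_{E,W}(\Lambda\id_G^\Lambda) = m_{E,W}(\Lambda'\id_G^{\Lambda'})$, with $(E,W)$ now read as an $\cS$-seed for $\cK'$; and the integers $n_E^G$ and $m_W$ are unchanged. So we may and do assume $\cK$ is closed under subquotients up to isomorphism, and, after replacing $(E,W)$ by an equivalent seed, that $E\in\cK$.

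For each isomorphism class $[T]$ let $\Lambda\id_G^{\leq[T]}$ be the $\kay$-span of the basis elements $s_U^{F,G}$ (with $F\in\cK$, $U\in\cS(F,G)$) for which $[\Theta(U)]\leq[T]$; since left multiplication cannot increase the thorax (Corollary~\ref{2.6}), this is a $\Lambda$-submodule, and likewise for $\Lambda\id_G^{<[T]}$. As $\Theta(U)$ is always a factor group of $G$, these submodules constitute a finite filtration of $\Lambda\id_G^\Lambda$ with layers $L_{[T]} = \Lambda\id_G^{\leq[T]}/\Lambda\id_G^{<[T]}$ indexed by the factor-group classes $[T]$ of $G$, where $L_{[T]}(F)$ has $\kay$-basis the images of the $s_U^{F,G}$ with $\Theta(U)\cong T$. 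Because $\Lambda$ is locally semisimple, $\Lambda\id_G^\Lambda$ has finite length, so $m_{E,W}(\Lambda\id_G^\Lambda)=\sum_{[T]}m_{E,W}(L_{[T]})$, and it suffices to show $m_{E,W}(L_{[T]})=0$ for $[T]\neq[E]$ while $m_{E,W}(L_{[E]})=n_E^G m_W$. By Lemma~\ref{4.1}, $m_{E,W}(L_{[T]})$ is the multiplicity of $W$ in $\oo{L_{[T]}}(E)=\Res^{\mu_E}(L_{[T]}(E)/\End_\Lambda(E)_< . L_{[T]}(E))$. If $[T]\not\leq[E]$ there is no $U\in\cS(E,G)$ with $\Theta(U)\cong T$, so $L_{[T]}(E)=0$. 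If $[T]<[E]$, then, using $T\in\cK$, Corollary~\ref{2.7} and Lemma~\ref{2.10} write each such $s_U^{E,G}$ as $s_X^{E,T}s_Y^{T,G}$ with $\Theta(X)\cong T$; inserting a factorization $\id_T^\cS = Z * Z'$ through $E$ (which exists since $[T]\leq[E]$) rewrites $s_X^{E,T}$ as a unit multiple of $s_{X*Z}^{E,E}s_{Z'}^{E,T}$ with $X*Z\in\cS(E,E)_<$, so the image of $s_U^{E,G}$ lies in $\End_\Lambda(E)_< . L_{[T]}(E)$; hence again $\oo{L_{[T]}}(E)=0$. Finally, for $[T]=[E]$, Corollary~\ref{2.6} shows $\End_\Lambda(E)_<$ annihilates $L_{[E]}$, so $\oo{L_{[E]}}(E)=\Res^{\mu_E}(L_{[E]}(E))$; a morphism $U\in\cS(E,G)$ with $\Theta(U)\cong E$ has the shape $\Delta(E,1,\theta_U,U_\bullet,U^\bullet)$, and the cocycle value $\sigma(\Delta(\theta),U)=\ell(1)=1$ shows that $\mu_E(\theta)$ acts on these basis elements by the permutation $U\mapsto\Delta(E,1,\theta\circ\theta_U,U_\bullet,U^\bullet)$. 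This $\Aut(E)$-action is free, with orbits in bijection with the $n_E^G$ subquotients $U^\bullet/U_\bullet$ of $G$ isomorphic to $E$, so $\oo{L_{[E]}}(E)\cong(\kay\Aut(E))^{n_E^G}$ as a $\kay\Aut(E)$-module, and the multiplicity of $W$ in it is $n_E^G m_W$, as required.

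The step I expect to be most delicate is the reduction: one must check that passing to the subquotient closure $\cK'$ neither destroys local semisimplicity — which is what the full-idempotent/Morita argument secures — nor alters $m_{E,W}(\Lambda\id_G^\Lambda)$. Granted that, the filtration argument is routine; the only genuine work left is the Goursat-parameter bookkeeping that identifies the $\Aut(E)$-action on the top layer as a permutation action and verifies that the lower layers are absorbed into $\End_\Lambda(E)_<$.
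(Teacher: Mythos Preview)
Your argument is correct, but it is considerably more elaborate than the paper's. Two points of comparison.

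\emph{The reduction.} The paper does not pass to the full subquotient closure; it merely sets $\cK' = \cK \cup \{E\}$ and invokes Remark~\ref{3.4}. Your Morita/full-idempotent argument that $\Lambda_{\cK'}$ inherits local semisimplicity is a genuine addition---the paper uses this implicitly when applying Lemma~\ref{4.1} over $\cK'$, but does not spell it out---so you are filling a small gap there. On the other hand, passing to the full closure is more than is needed.

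\emph{The multiplicity computation.} Here the paper is much more direct: with $M = \Lambda\,\id_G$, it simply observes that for $U = \Delta(A,X,\phi,Y,B) \in \cS(E,G)$ one has
\[
s_{\Delta(A,X,\id,X,A)}^{E,E}\, s_U^{E,G} = \ell(X)\, s_U^{E,G},
\]
and the left factor lies in $\cE_<$ unless $A = E$ and $X = 1$. This single identity shows at once that $\oo{M}(E)$ is spanned by the images of the $s_{\Delta(E,1,\phi,Y,B)}^{E,G}$, and the $\Aut(E)$-action on those is exactly your free permutation action. There is no need for the thorax filtration: the elements you place in lower layers are already seen to lie in $\cE_< \cdot M(E)$ by this one-line calculation. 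Your filtration is a perfectly valid alternative organization, and the bookkeeping in the case $[T] < [E]$ (factoring through $T$ and then back through $E$) is correct, but it reproduces by a longer route what the paper's identity gives immediately.
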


\begin{proof}
Let $\cK' = \cK \cup \{ E \}$ and
$\Lambda' = \Lambda_{\cK'}$. Noting
that $\id_G^{\Lambda'} =
s_{\Delta(G)}^{G, G} = \id_G^\Lambda$,
Remark 3.4 allows us to replace $\cK$
with $\cK'$. So we may assume that
$E \in \cK$. Write $M = \Lambda .
\id_G^\Lambda$ and, again, $\cE =
\End_\Lambda(E)$. Then $M =
\bigoplus_{F \in \cK} \Lambda(F, G)$
and $M(E) = \Lambda(E, G)$. Given
$U \in \cS(E, G)$ and writing
$U = \Delta(A, X, \phi, Y, B)$, then
$$s_{\Delta(A, X, \id, X, A)}^{E, E}
  s_U^{E, G} = \ell(X) s_U^{E, G}$$
which belongs to $\cE_< . M(E)$
unless $A = E$ and $X = 1$. So
$$\oo{M}(E) =
  \bigoplus_{\phi, B/Y} \kay
  (s_{\Delta(E, 1, \phi, Y, B)}^{E, G}
  + \cE_< . M(E))$$
as $\kay$-modules, where $B/Y$ runs
over the subquotients of $G$ isomorphic
to $E$ and $\phi$ runs over the
isomorphisms $E \leftarrow B/Y$.
Given $\epsilon \in \Aut(E)$, then
$$\mu_E(\epsilon)
  s_{\Delta(E, 1, \phi, Y, B)}^{E, G}
  = s_{\Delta(E, 1, \epsilon \comp
  \phi, Y, B)}^{E, G} \; .$$
So $\oo{M}(E)$ is isomorphic
to the direct sum of $n_E^G$ copies
of the regular $\kay \Aut(E)$-module.
An appeal to Lemma \ref{4.1}
completes the argument.
\end{proof}

\begin{lem}
\label{4.3}
Given an $\cS$-seed $(E, W)$ for
$\cK$, then $\End_\Lambda(S_{E, W})
\cong \End_{\kay \Aut(E)}(W)$ as
an isomorphism of division algebras
over $\kay$.
\end{lem}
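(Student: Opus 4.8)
The plan is to reduce to the case $\cK = \{E\}$ by combining the corner-embedding machinery of Proposition~\ref{2.1} and Lemma~\ref{2.2} with the minimal-group description of $S_{E,W}$. First I would recall that $S_{E,W}$ was constructed, by definition, as $\Lambda S'$ for a suitable enlargement $\cK'$ of $\cK$ closed under subquotients, where $S'$ is the simple $\Lambda' = \Lambda_{\cK'}$-module with $\cS$-seed $(E,W)$ obtained from Theorem~\ref{3.2}. Since $\Lambda$ is a corner subalgebra of $\Lambda'$ and $\Lambda S' \neq 0$ by Lemma~\ref{3.3}, Lemma~\ref{2.2} applies (both $\Lambda'$ and $\Lambda$ are locally artinian, being locally finite-dimensional over a field), giving an isomorphism of division algebras $\End_\Lambda(S_{E,W}) \cong \End_{\Lambda'}(S')$. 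So it suffices to compute $\End_{\Lambda'}(S')$, and for that I may as well assume from the outset that $\cK$ itself is closed under subquotients up to isomorphism and that $E \in \cK$.

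Next, working inside such a $\cK$, I would use the corner subalgebra $\cE = \End_\Lambda(E) = \id_E^\Lambda . \Lambda . \id_E^\Lambda$. By Proposition~\ref{2.1} applied to this corner embedding, $S(E) = \id_E . S$ is a simple $\cE$-module, and by Lemma~\ref{2.2} again, $\End_\cE(S(E)) \cong \End_\Lambda(S)$ as division algebras. Now I invoke the decomposition of Proposition~\ref{3.1}: $\cE = \mu_E(\kay\Aut(E)) \oplus \cE_<$ as a common-unity subalgebra plus an ideal. The key point, which comes from the characterization of $S = S_{E,W}$ having minimal group $E$ (as in Lemma~\ref{4.1}'s argument), is that $\cE_< . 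S(E) = 0$: if it were nonzero it would equal the simple module $S(E)$, forcing $S$ to have a strictly smaller minimal group, contradicting minimality. Hence $S(E)$ is a module for the quotient $\cE / \cE_< \cong \mu_E(\kay\Aut(E)) \cong \kay\Aut(E)$, and as such it is simple, so by the characterization in Theorem~\ref{3.6}(b) it is isomorphic to $\Res^{\mu_E}$ of the simple $\kay\Aut(E)$-module $W$. Therefore $\End_\cE(S(E)) = \End_{\kay\Aut(E)}(W)$, and chaining the isomorphisms gives $\End_\Lambda(S_{E,W}) \cong \End_{\kay\Aut(E)}(W)$ as claimed.

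The one place requiring care — the main obstacle — is justifying $\cE_< . S(E) = 0$ without circularity, since Lemma~\ref{4.1} (which contains essentially this argument) assumes $\Lambda$ locally semisimple, a hypothesis we are not given here. I would extract the relevant part of that reasoning cleanly: the vanishing follows purely from $S(E)$ being simple over $\cE$ (Proposition~\ref{2.1}) together with the minimality clause in the definition of the minimal group of $S_{E,W}$ via Theorem~\ref{3.6}(c), using the factorization identity $s_J^{G,E}\mu_E(\epsilon)s_I^{E,G} = \ell(Y)\mu_\phi(\epsilon)$ from the proof of Theorem~\ref{3.6} to see that a nonzero action of $\cE_<$ on $S(E)$ would produce a witness for a strictly smaller seed group. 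Once that is in hand, everything else is bookkeeping with the two ring-theoretic lemmas of Section~2.
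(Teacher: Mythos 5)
Your proof is correct and follows the same core strategy the paper intends: chain two applications of Lemma~\ref{2.2} (first across the enlargement $\cK\subseteq\cK'$, then to the corner $\cE=\End_\Lambda(E)$) and then pass through the decomposition of Proposition~\ref{3.1}, the paper's one-line proof merely observing that the isomorphisms of Lemma~\ref{2.2} are $\kay$-linear. Your extra care is well placed: since $\mu_E(\kay\Aut(E))$ is \emph{not} a corner subalgebra of $\cE$, the final step genuinely requires the annihilation $\cE_<.S(E)=0$, and you are right that this cannot be borrowed wholesale from Lemma~\ref{4.1} (which assumes local semisimplicity); it follows cleanly from the minimality of $[E]$ subject to $S(E)\neq 0$ in Theorem~\ref{3.2} together with the factorization of Lemma~\ref{2.10}, which is the cleaner reference than the Theorem~\ref{3.6} identity you invoke (and your final citation of Theorem~\ref{3.6}(b) for $W\cong\Res^{\mu_E}(S(E))$ should really be Theorem~\ref{3.2}) — but these are cosmetic quibbles, not gaps.
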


\begin{proof}
The ring isomorphisms in the proof
of Lemma \ref{2.2} are $\kay$-linear
when, in the notation of that lemma,
$A$ is an algebra over $\kay$ and
$B$ is a subalgebra.
\end{proof}

Given an $\cS$-seed $(E, W)$ for
$\cK$, we let $\Delta_W$ denote
the opposite ring of
$\End_{\kay \Aut(E)}(W)$. Up to
isomorphism, $\Delta_E$ is
determined by the condition that
$$\kay \Aut(E) b_W \cong
  \Mat_{m_W}(\Delta_W)$$
where $b_W$ is the block of
$\kay \Aut(E)$ fixing $W$.

\begin{thm}
\label{4.4}
Suppose $\cK$ is finite and $\Lambda$
is semisimple. Given an $\cS$-seed
$(E, W)$ for $\cK$, then
$$\Lambda b_{E, W} \cong
  \Mat_{n_E m_W}(\Delta_{E, W})$$
where $\Delta_{E, W}$ is the opposite
algebra of $\End_{\kay \Aut(E)}(W)$.
\end{thm}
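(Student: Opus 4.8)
The plan is to identify $\Lambda b_{E,W}$, via the Artin--Wedderburn theorem, as a full matrix algebra $\Mat_d(D)$ over a division $\kay$-algebra $D$, and then to read off the two invariants $d$ and $D$ from Lemmas \ref{4.2} and \ref{4.3} respectively. No new ideas beyond those lemmas are needed; the argument is their assembly.

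Since $\cK$ is finite, $\Lambda$ is unital with identity $\sum_{G \in \cK} \id_G^\Lambda$, so the hypothesis says that $\Lambda$ is semisimple in the classical sense and the block algebra $\Lambda b_{E,W}$ is a simple artinian ring. By Artin--Wedderburn, $\Lambda b_{E,W} \cong \Mat_d(D)$ for a uniquely determined positive integer $d$ and a division $\kay$-algebra $D$ unique up to isomorphism. Here $S_{E,W}$ is the unique simple $\Lambda b_{E,W}$-module, and every block of $\Lambda$ other than $b_{E,W}$ annihilates $S_{E,W}$, so $\End_\Lambda(S_{E,W}) = \End_{\Lambda b_{E,W}}(S_{E,W})$, which under the isomorphism $\Lambda b_{E,W} \cong \Mat_d(D)$ is the opposite algebra $D^{\mathrm{op}}$. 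Lemma \ref{4.3} then gives $\End_\Lambda(S_{E,W}) \cong \End_{\kay\Aut(E)}(W)$, so $D$ is the opposite algebra of $\End_{\kay\Aut(E)}(W)$, that is, $D \cong \Delta_{E,W}$.

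It remains to show $d = n_E m_W$. For the simple ring $\Mat_d(D)$ the multiplicity of its simple module in its left regular module is $d$; and since $S_{E,W}$ occurs as a composition factor only within the summand $\Lambda b_{E,W}$ of the block decomposition of $\Lambda$, the integer $d$ equals the multiplicity of $S_{E,W}$ in $\Lambda$ regarded as a left module over itself. Decomposing $\Lambda = \bigoplus_{G \in \cK} \Lambda\id_G^\Lambda$ as left modules (each summand finite-dimensional, so that composition-factor multiplicities are well defined) and applying Lemma \ref{4.2} to each summand yields $d = \sum_{G \in \cK} m_{E,W}(\Lambda\id_G^\Lambda) = m_W \sum_{G \in \cK} n_E^G = n_E m_W$, the last equality because $\sum_{G \in \cK} n_E^G$ counts precisely the triples $(G, B, Y)$ with $\cK \ni G \geq B \unrhd Y$ and $B/Y \cong E$. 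This gives the claimed isomorphism.

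I expect no genuine obstacle here, as the substantive work is already done in Lemmas \ref{4.2} and \ref{4.3}. The one point demanding care is the bookkeeping of opposite algebras, so that the division algebra produced by Artin--Wedderburn is matched with the opposite-algebra convention built into the definition of $\Delta_{E,W}$; a secondary point is to invoke the standing finite-dimensionality convention on $\Lambda$-modules so that the multiplicities $m_{E,W}(\Lambda\id_G^\Lambda)$ are finite and additive along the decomposition of $\Lambda$.
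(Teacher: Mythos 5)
Your proof is correct and follows essentially the same route as the paper: decompose the regular module as $\bigoplus_{G\in\cK}\Lambda\,\id_G^\Lambda$, apply Lemma \ref{4.2} to obtain $m_{E,W}({}_\Lambda\Lambda)=n_E m_W$, and use Lemma \ref{4.3} to pin down the division algebra. The paper merely states these steps more tersely, leaving the Artin--Wedderburn bookkeeping implicit, whereas you spell it out; the substance is identical.
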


\begin{proof}
The regular $\Lambda$-module
decomposes as ${}_\Lambda \Lambda
= \bigoplus_{G \in \cK} \Lambda .
\id_G^\Lambda$. Since $n_E = \sum_G
n_E^G$, Lemma \ref{4.2} gives
$m_{E, W}({}_\Lambda \Lambda)
= n_E m_W$. So
$${}_\Lambda \Lambda b_{E, W}
  \cong n_E m_W S_{E, W}$$
as $\Lambda$-modules. To complete
the argument, we apply Lemma
\ref{4.3}.
\end{proof}

Theorem \ref{1.2} follows because
$\Lambda = \bigoplus_{E, W}
\Lambda b_{E, W}$, summed over the
$\cS$-seeds $(E, W)$ up to equivalence.

\section{The round basis}

After Boltje--Danz \cite{BD13}, we
introduce a basis for $\Lambda$, called
the {\bf round basis}. We review
their characterization
\cite[Sections 2, 3]{BD13} of the product
of two elements of that basis. Our
reformulation, in terms of changes of
coodinates rather than changes of
associative operation, will suit our
applications in later sections.

We refer to Stanley \cite[Chapter 3]{Sta11}
for some terminology pertaining to posets.
Let $\cP$ be a poset such that every finite
subset of $\cP$ generates a finite order
ideal, equivalently, for all $u \in \cP$,
the principal order ideal $(\dash, u]_\cP
= \{ v \in \cP : v \leq u \}$ is finite. See
\cite[3.7, 3.8]{Sta11} for an introduction
to the theory of the M\"{o}bius function
$\mob_\cP : \ZZ \leftarrow \cP {\times}
\cP$. A well-known and straightforward
generalization of \cite[3.7.1]{Sta11}
asserts that, given an abelian group
$\cA$ and funtions $\sigma, \tau :
\cA \leftarrow \cP$, then the following
two conditions are equivalent:

\smallskip
\noin $\bullet$ we have $\sigma(u)
= \sum_{v \in (\dash, u]_\cP}
\tau(v)$ for all $u \in \cP$,

\smallskip
\noin $\bullet$ we have $ \tau(u) =
\sum_{v \in \cP} \sigma(v) \mob(v, u)$
for all $v \in \cP$.

\smallskip
\noin We call $\sigma$ the {\bf sum
function} of $\tau$. We call $\tau$ the
{\bf totient function} of $\sigma$. Let
us mention that the terminology reflects
the intended sense in the origin of the
word {\it totient}, Sylvester [Sy888].

A {\bf downward retraction} of $\cP$
is defined to be a decreasing idempotent
endomorphism of $\cP$, we mean, a
function $\rho : \cP \leftarrow \cP$ such
that $\rho^2(u) = \rho(u) \leq u$ for all
$u \in \cP$. All the retractions we consider
will be downward retractions. The next
result is in Boltje--Danz [BD13, 4.1].
Let us give a quick alternative proof.

\begin{lem}
\label{5.1} {\rm (Boltje--Danz.)}
Let $\cP$ and $\cA$ be as above, $\rho$
a retraction of $\cP$ and
$\sigma : \cA \leftarrow \cP$ a function
such that $\sigma(u) = \sigma(\rho(u))$
for all $u \in \cP$. Write $\sigma'$ for
the restriction of $\sigma$ to the
subposet $\cP' = \rho(\cP)$. Let $\tau$
and $\tau'$ be the totient functions
of $\sigma$ and $\sigma'$ on $\cP$
and $\cP'$, respectively. Then $\tau(v)
= \tau'(v)$ for all $v \in \cP'$ and
$\tau(v) = 0$ for all $v \in \cP - \cP'$.
\end{lem}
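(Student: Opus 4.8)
The plan is to argue directly from the Möbius-inversion characterization of the totient function, using the hypothesis $\sigma = \sigma \circ \rho$ together with a decomposition of the principal order ideals $(\dash, u]_\cP$ according to the retraction $\rho$. Fix $v \in \cP$ and recall that $\tau(v) = \sum_{w \in \cP} \sigma(w)\, \mob_\cP(w, v)$, where in fact only the $w$ with $w \le v$ contribute. I would like to reorganize this sum by grouping together those $w \le v$ that have the same image $\rho(w)$. Since $\sigma(w) = \sigma(\rho(w))$, each group contributes $\sigma(\rho(w))$ times a partial sum of values of $\mob_\cP(\dash, v)$, and the claim will follow once those partial sums are shown to collapse appropriately.

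First I would establish the two needed facts about $\rho$ acting on the interval $(\dash, v]_\cP$. Since $\rho$ is a downward retraction, $w \le v$ implies $\rho(w) \le \rho(v) \le v$ if $\rho$ were order-preserving; here $\rho$ is an endomorphism of $\cP$, hence monotone, so $\rho\big((\dash,v]_\cP\big) \subseteq (\dash,\rho(v)]_{\cP'}$, and on $\cP'$ the map $\rho$ is the identity. I would then invoke the standard defining recursion of the Möbius function: $\sum_{w \,:\, v' \le w \le v} \mob_\cP(w,v) = \delta_{v',v}$ for any $v' \le v$. The key combinatorial step is to fiber the sum $\tau(v) = \sum_{w \le v} \sigma(\rho(w)) \mob_\cP(w,v)$ over the value $v' = \rho(w) \in \cP'$ with $v' \le \rho(v)$, obtaining
$$\tau(v) = \sum_{\substack{v' \in \cP' \\ v' \le \rho(v)}} \sigma(v') \!\!\sum_{\substack{w \le v \\ \rho(w) = v'}} \!\! \mob_\cP(w,v).$$
The heart of the argument is therefore to evaluate the inner sum $c_{v'} := \sum_{w \le v,\ \rho(w) = v'} \mob_\cP(w,v)$ for each $v' \in \cP'$ with $v' \le \rho(v)$.

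To evaluate $c_{v'}$ I would compare with the "unrestricted" fiber sums. For fixed $v'$, the set $\{ w \le v : \rho(w) \ge v' \}$ is an order filter inside $(v', v]_\cP$ (using monotonicity of $\rho$ and $\rho|_{\cP'} = \id$ to see $v' \le w$ whenever $\rho(w) = v'$), and one can run a downward induction on the difference between $\{w \le v : \rho(w) = v'\}$ and the full interval $[v', v]$. Concretely: by induction on $v'$ descending through $(\dash,\rho(v)]_{\cP'}$, one shows $\sum_{w \le v,\ \rho(w) \ge v'} \mob_\cP(w,v) = \sum_{v'' \ge v',\ v'' \in \cP'} c_{v''}$, while the left side equals $\sum_{w \in [v',v]} \mob_\cP(w,v) = \delta_{v',v}$ precisely when every $w \in [v',v]$ satisfies $\rho(w) \ge v'$ — which holds because $\rho$ is monotone and $\rho(v') = v'$. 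This forces $c_{v'} = \delta_{v',v}$ for $v' \in \cP'$. Plugging back in: if $v \in \cP'$ then $\rho(v) = v$ and $\tau(v) = \sum_{v' \le v,\, v' \in \cP'} \sigma(v') \delta_{v',v} = \sigma(v) = \sigma'(v)$, but this last expression is exactly the Möbius-inversion formula for $\tau'(v)$ computed inside $\cP'$, so $\tau(v) = \tau'(v)$; and if $v \notin \cP'$ then $\rho(v) \ne v$, so $v \notin (\dash, \rho(v)]_{\cP'}$ and no term of the outer sum hits $\delta_{v',v} = 1$, giving $\tau(v) = 0$.

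The main obstacle is the bookkeeping in the inner-sum evaluation: one must be careful that $\{ w \le v : \rho(w) = v' \}$ really does decompose as an interval minus higher fibers, which relies essentially on $\rho$ being both monotone and a retraction onto $\cP'$, so that $w \mapsto \rho(w)$ restricted to $[v',v]$ takes values in $\cP' \cap [v', v]$ with $\rho(w) = v'$ characterizing the "bottom stratum." Once this stratification is set up cleanly, the vanishing of the off-diagonal $c_{v'}$ is just the defining property of $\mob_\cP$ applied interval-by-interval, and the final case split on whether $v \in \cP'$ is immediate.
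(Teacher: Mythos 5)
Your overall strategy — expand $\tau(v) = \sum_{w \le v} \sigma(\rho(w)) \mob_\cP(w,v)$, fiber over $v' = \rho(w)$, and evaluate the fiber sums $c_{v'}$ — is sound and different in flavor from the paper's proof, which works with the sum-function relation $\sigma(v) = \sum_{w \le v} \tau(w)$ and an induction on the height of $v$ rather than with the M\"obius-inversion formula. But there is a genuine error at the step where you solve the triangular system for $c_{v'}$.

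You correctly derive, for each $v' \in \cP'$ with $v' \le \rho(v)$, the identity
$$\sum_{\substack{v'' \in \cP' \\ v' \le v'' \le \rho(v)}} c_{v''} \;=\; \sum_{w \in [v',v]_\cP} \mob_\cP(w,v) \;=\; \delta_{v',v}.$$
But from this you conclude ``this forces $c_{v'} = \delta_{v',v}$,'' which does not follow: you have expressed $\delta_{v',v}$ as an \emph{upward} sum of the $c_{v''}$ over the subposet $\cP' \cap (\dash,\rho(v)]$, and solving that system (by dual M\"obius inversion in $\cP'$, or by downward induction on $v'$) gives $c_{v'} = \mob_{\cP'}(v',v)$ when $v \in \cP'$, and $c_{v'} = 0$ for every $v'$ when $v \notin \cP'$ (since then $\delta_{v'',v} = 0$ for all $v'' \le \rho(v) < v$). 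Your asserted $c_{v'} = \delta_{v',v}$ is only the correct answer in the degenerate case $v' = \rho(v)$. A concrete counterexample: take $\cP = \{0 < a < 1\}$ with $\rho(0)=\rho(a)=0$, $\rho(1)=1$, $\cP'=\{0,1\}$, and $v=1$; then $c_0 = \mob_\cP(0,1) + \mob_\cP(a,1) = 0 + (-1) = -1$, not $\delta_{0,1}=0$.

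The error then propagates: your conclusion $\tau(v) = \sigma(v)$ for $v \in \cP'$ is false in general, and the claim that ``$\sigma'(v)$ is exactly the M\"obius-inversion formula for $\tau'(v)$'' is also false — that formula is $\tau'(v) = \sum_{v' \le v,\, v' \in \cP'} \sigma'(v')\, \mob_{\cP'}(v',v)$, which coincides with $\sigma'(v)$ only when $v$ is minimal. With the corrected values $c_{v'} = \mob_{\cP'}(v',v)$ (for $v \in \cP'$) and $c_{v'} = 0$ (for $v \notin \cP'$), substitution into $\tau(v) = \sum_{v'} \sigma(v') c_{v'}$ gives precisely $\tau(v) = \tau'(v)$ and $\tau(v) = 0$ respectively, so the method does succeed once that step is repaired.
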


\begin{proof}
Let $v \in \cP$. First consider the
case $v \not\in \cP'$. Writing
$v' = \rho(v)$, then
$$0 = \sigma(v) - \sigma(v') = \tau(v)
  + {\sum}_w \tau(w)$$
where $w$ runs over those elements
of $\cP$ such that $v' \not\geq w < v$.
Since $\rho$ is an endomorphism, each
$\rho(w) \leq v' \neq w$, hence
$w \not\in \cP'$. By an inductive argument
on the height of $v$, we may assume that
each $\tau(w) = 0$, hence $\tau(v) = 0$.
Now consider the case $v \in \cP'$.  By
the conclusion for the other case,
$$\tau(v) + \!\! \sum_{w \in
  (\dash, v)_{\cP'}} \!\! \tau(w) =
  \sum_{w \in (\dash, v]_\cP} \tau(w)
  = \sigma(v) = \sigma'(v) =
  \tau'(v) + \!\! \sum_{w \in
  (\dash, v)_{\cP'}} \!\! \tau'(w)$$
where the notation indicates that
two of the summations are over open
intervals. Another inductive argument
on height yields $\tau(v) = \tau'(v)$.
\end{proof}

Let $F$, $G$, $H$ be finite groups. We
write $\mob(F, G)$ for the value, at
$(F, G)$, of the M\"{o}bius function of
the formal poset of finite groups,
partially ordered by inclusion. Let
$I \in \cS(F, G)$, $J \in \cS(G, H)$,
$K \in \cS(F, H)$. We define
$$\cP_K^{I, J} = \{ (U, V) \in \cS(F, G)
  \times \cS(G, H) : K \leq U * V, (U, V)
  \leq (I, J) \}$$
as a subposet of the direct product
$\cS(F, G) {\times} \cS(G, H)$. We define
$$\tau_K^{I, J} = \sum_{(U, V) \in
  \cP_K^{I, J}} \mob(U, I) \mob(V, J)
  \sigma(U, V) \; .$$
We call the pair $(I, J)$ {\bf strongly
compatible} provided $I^\bullet =
{}^\bullet J$. Note, the condition implies
that ${}^\bullet (I * J) = {}^\bullet I$
and $(I * J)^\bullet = J^\bullet$. Given
$W$ such that $K \leq W \in \cS(F, H)$,
we call $K$ an {\bf adequate subgroup}
of $W$ provided ${}^\bullet K =
{}^\bullet W$ and $K^\bullet =
W^\bullet$. Let $\ad(W)$ denote the
set of adequate subgroups of $W$. We
point out that, if $(I, J)$ is strongly
compatible and $K \in \ad(I * J)$,
then ${}^\bullet K = {}^\bullet I$ and
$K^\bullet = J^\bullet$.

We define $\{ t_I^{F, G} : I \in
\cS(F, G) \}$ to be the $\kay$-basis
for $\kay_\sigma \cS(F, G)$ given
by the two equivalent equalities
$$s_U^{F, G} = \sum_{I \in \cS(U)}
  t_I^{F, G} \; , \dozspace \;\;\;\;\;\;
  t_I^{F, G} = \sum_{U \in \cS(I)}
  \mob(U, I) \, s_U^{F, G}$$
where the first equality holds for all
$U \in \cS(F, G)$, the second, for all
$I \in \cS(F, G)$. To confirm the
equivalence, observe that the functions
$\kay_\sigma \cS(F, G) \leftarrow
\cS(F {\times} G)$ given
by  $s_U^{F, G} \mapsfrom U$ and
$t_I^{F, G} \mapsfrom I$ are, respectively,
the sum function and the totient function
of each other. We point out that the
basis $\{ t_I^{F, G} : I \}$ of
$\kay_\sigma \cS(F, G)$ can be
regarded simply as a basis of the
$\kay$-module $\kay \cS(F, G)$,
well-defined independently of $\ell$.
However, we shall be concerned with
composites of the basis elements
in $\kay_\sigma \cS$. The composites
do depend on $\ell$.

The set $\{ t_I^{F, G} : F \in \cK \ni G,
I \in \cS(F, G) \}$ is another $\kay$-basis
for $\Lambda$. We call it the {\bf round
basis}.  Let us mention that the round
basis has a long history, a version of it
going back to the 19th century, implicitly
appearing in Burnside's celebrated
table of marks. We shall be working with
the round basis of $\Lambda$ in later
sections. For now, though, there is no
need to pass down to a small
subcategory of $\kay_\sigma \cS$.

The next result is [BD13, 3.9, 4.2, 4.3].
In some of our applications of the
formulas for $\tau_K^{I, J}$, we shall
be considering the poset retraction
that appears in the proof.

\begin{thm}
\label{5.2} {\rm (Boltje--Danz.)}
Let $F$, $G$, $H$ be finite groups.
Let $I \in \cS(F, G)$ and
$J \in \cS(G, H)$. Then:

\noin {\bf (1)} We have
$$t_I^{F, G} t_J^{F, G} = \sum_{K \in
  \cS(F, H)} \tau_K^{I, J} t_K^{I, J} \; .$$

\noin {\bf (2)} For any $K \in \cS(F, H)$,
if $\tau_K^{I, J} \neq 0$, then $(I, J)$ is
strongly compatible and $K \in \ad(I * J)$.

\noin {\bf (3)} If $t_I^{F, G} t_J^{G, H}
\neq 0$, then $(I, J)$ is strongly
compatible.

\noin {\bf (4)} If $(I, J)$ is strongly
compatible then, for all $K \in \ad(I * J)$,
we have
$$\tau_K^{I, J} = \sum_{(U, V) \in
  \cR_K^{I, J}} \mob_{\cR_K^{I, J}}
  ((U, V), (I, J)) \sigma(U, V)$$
where $\cR_K^{I, J} = \{ (U, V) \in
\cP_K^{I, J} : U^\bullet = {}^\bullet V \}$.
\end{thm}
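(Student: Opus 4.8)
\textbf{Proof proposal for Theorem \ref{5.2}.}
The plan is to prove the four parts in the order (1), (3), (2), (4), deriving each from the previous ones together with the cocycle identity in Proposition \ref{2.9} and the M\"{o}bius inversion machinery of Lemma \ref{5.1}. For part (1), I would start from the defining equalities relating $s_U^{F,G}$ to $t_I^{F,G}$, expand $t_I^{F,G} t_J^{G,H}$ using $t_I = \sum_{U \leq I} \mob(U,I) s_U$ and $t_J = \sum_{V \leq J} \mob(V,J) s_V$, and substitute the composition rule $s_U^{F,G} s_V^{G,H} = \sigma(U,V) s_{U*V}^{F,H}$. This produces $\sum_{U \leq I, V \leq J} \mob(U,I)\mob(V,J)\sigma(U,V)\, s_{U*V}^{F,H}$. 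Now re-expand each $s_{U*V}^{F,H} = \sum_{K \leq U*V} t_K^{F,H}$ and collect the coefficient of $t_K^{F,H}$: the pairs $(U,V)$ contributing are exactly those with $U \leq I$, $V \leq J$ and $K \leq U*V$, i.e. $(U,V) \in \cP_K^{I,J}$, giving coefficient $\tau_K^{I,J}$ as defined. This is a routine bookkeeping argument and should go through cleanly.

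For part (3): if $t_I^{F,G} t_J^{G,H} \neq 0$ then by (1) some $\tau_K^{I,J} \neq 0$, so it suffices to prove (2). For part (2), suppose $\tau_K^{I,J} \neq 0$. The key observation is that $\tau_K^{I,J}$ is, by its definition as an alternating sum over $\cP_K^{I,J}$, a value of a totient (M\"{o}bius) function; I want to exhibit a downward retraction $\rho$ of the poset $\cP_K^{I,J}$ (or of a suitable ambient poset) that collapses to the subposet of strongly compatible pairs with $K \in \ad(I*J)$, fixing $\sigma$-values, and then invoke Lemma \ref{5.1} to conclude that $\tau_K^{I,J}$ vanishes off the image of $\rho$. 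Concretely, given $(U,V) \in \cP_K^{I,J}$, the natural candidate is to replace $U$ by the subgroup with $U^\bullet$ shrunk to ${}^\bullet V$ (or symmetrically $V$'s ${}^\bullet V$ grown/cut to $U^\bullet$) — one must check this is well-defined as a subgroup of $F \times G$, is $\leq U$, is idempotent, is order-reversing-compatible as a poset endomorphism, and leaves $\sigma(U,V) = \ell(U_\bullet \cap {}_\bullet V)$ unchanged. Checking that $\sigma$ is constant on $\rho$-fibres is the crux: one needs $U_\bullet \cap {}_\bullet V$ to be unaffected, which should follow because the retraction only alters the ``top'' data $U^\bullet$, not the ``bottom'' data $U_\bullet$, and similarly for $V$. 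Granting the fibre-constancy, Lemma \ref{5.1} forces $\tau_K^{I,J} = 0$ unless $(U,V) = \rho(U,V)$ for the relevant extremal pair, which is exactly the strong-compatibility condition $I^\bullet = {}^\bullet J$; the analogous argument on the $K$-side, using a retraction that enlarges $K$ to an adequate subgroup of $U*V$, yields $K \in \ad(I*J)$.

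For part (4): assuming $(I,J)$ strongly compatible and $K \in \ad(I*J)$, I want to reduce the sum over $\cP_K^{I,J}$ to the sum over the subposet $\cR_K^{I,J}$ of pairs with $U^\bullet = {}^\bullet V$. Again this is an application of Lemma \ref{5.1}: define the retraction $\rho$ on $\cP_K^{I,J}$ sending $(U,V)$ to the pair obtained by cutting $U^\bullet$ down to $U^\bullet \cap {}^\bullet V$ and ${}^\bullet V$ down to the same — one checks this lands in $\cP_K^{I,J}$ (the constraint $K \leq U*V$ is preserved because $K$'s relevant data already sits inside ${}^\bullet I = {}^\bullet K$ and $J^\bullet = K^\bullet$ by the remark preceding the theorem), is idempotent and decreasing, and has image $\cR_K^{I,J}$. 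Verifying $\sigma(U,V)$ is constant on the fibres is once more the essential point, handled as in part (2). Then Lemma \ref{5.1} replaces the M\"{o}bius function of the interval $\cP_K^{I,J}$ by that of $\cR_K^{I,J}$ and kills the off-image terms, giving the stated formula.

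\textbf{Main obstacle.} The genuinely delicate step is constructing the poset retractions in parts (2) and (4) and, above all, verifying that $\sigma(U,V) = \ell(|U_\bullet \cap {}_\bullet V|)$ is invariant along each retraction fibre — this requires a careful Goursat-coordinate analysis (via Proposition \ref{2.4}) showing that the bottom parameters $U_\bullet$, ${}_\bullet V$ and their intersection are untouched when one truncates the top parameters $U^\bullet$, ${}^\bullet V$. Everything else (parts (1) and (3), and the M\"{o}bius inversions once the retractions are in place) is essentially formal.
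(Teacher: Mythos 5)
Your overall strategy — expand in the round basis, recognize $\tau_K^{I,J}$ as the value of a totient function, and invoke Lemma \ref{5.1} via a $\sigma$-preserving poset retraction — is exactly the paper's. Part (1) is routine bookkeeping just as you say (the paper also invokes convexity of $\cP_K^{I,J}$ and \cite[3.8.2]{Sta11} to identify $\mob(U,I)\mob(V,J)$ with the M\"{o}bius function of $\cP_K^{I,J}$, which you implicitly need). Your proposed retraction for part (4) — cut $U^\bullet$ and ${}^\bullet V$ down to their intersection, i.e.\ send $(U,V)$ to $(\{f\times g\in U: g\in {}^\bullet V\},\ \{g\times h\in V: g\in U^\bullet\})$ — is correct: it lands in $\cP_K^{I,J}$, is a decreasing idempotent poset endomorphism with image $\cR_K^{I,J}$, and preserves $\sigma$ because it touches only the top Goursat parameters and hence leaves $U_\bullet\cap{}_\bullet V$ unchanged. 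That gives part (4) and also the strong-compatibility half of part (2) directly. So far this is a clean, slightly different realization of the same idea.

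The genuine gap is the adequacy half of part (2), i.e.\ the conclusion $K\in\ad(I*J)$. Your retraction is blind to $K$, so its image contains $(I,J)$ whenever $(I,J)$ is strongly compatible and $K\le I*J$, regardless of whether $K$ is adequate; Lemma \ref{5.1} therefore cannot force $\tau_K^{I,J}=0$ when $K\notin\ad(I*J)$. Your proposed remedy — ``a retraction that enlarges $K$ to an adequate subgroup of $U*V$'' — is not coherent: $K$ is a fixed parameter, not a coordinate of the poset $\cP_K^{I,J}$ you are retracting, and a downward retraction cannot enlarge anything anyway. The paper avoids this by using a single $K$-aware retraction: set $\Gamma_K(U,V) = \{f\times g\times h : f\times g\in U,\ f\times h\in K,\ g\times h\in V\}$ and send $(U,V)$ to the pair of projections of $\Gamma_K(U,V)$ to $F\times G$ and $G\times H$. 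Because $K$ appears in the fiber product, $(I,J)$ is fixed by this retraction only when both $I^\bullet={}^\bullet J$ and $K$ is adequate in $I*J$ (e.g.\ if ${}^\bullet K < {}^\bullet I$ then the first projection is strictly smaller than $I$), so both conclusions of part (2) fall out at once. If you prefer to keep your $K$-blind retraction, you could salvage the argument by composing with a second retraction $U\mapsto\{f\times g\in U : f\in {}^\bullet K\}$, $V\mapsto\{g\times h\in V : h\in K^\bullet\}$, which again preserves $\sigma$ (only top parameters are touched) and whose fixed points force ${}^\bullet I={}^\bullet K$ and $J^\bullet=K^\bullet$; but that step needs to be written out, and the ``enlarge $K$'' phrasing must go.
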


\begin{proof}
A straightforward manipulation yields
part (1). Note that, if $K \not\leq I * J$,
then $\cP_K^{I, J} = \emptyset$ and
$\tau_K^{I, J} = 0$. Now fix $K \leq I * J$.
By the convexity of $\cP_K^{I, J}$ in
$\cS(F, G) {\times} \cS(G, H)$ together
with the formula in \cite[3.8.2]{Sta11}
for the M\"{o}bius function of a direct
product of posets,
$$\mob(U, I) \mob(V, J) =
  \mob_{\cP_K^{I, J}} ((U, V), (I, J))$$
for each $(U, V) \in \cP_K^{I, J}$. Let
$$\Gamma_K(U, V) = \{ f {\times} g
  {\times} h \in F {\times} G {\times} H
  : f {\times} g \in U, f {\times} h \in K,
  g {\times} h \in V \} \; .$$
Let $S_{U, V} = \{ f {\times} g : f {\times}
g {\times} h \in \Gamma_K(U, V) \}$ and
$T_{U, V} = \{ g {\times} h : f {\times} g
{\times} h \in \Gamma_K(U, V) \}$,
which are subgroups of $U$ and $V$,
respectively. The function
$$\rho_K^{I, J} : \cP_K^{I, J} \ni
  (S_{U, V}, T_{U, V}) \mapsfrom (U, V)
  \in \cP_K^{I, J}$$
is easily shown to be a retraction that
preserves $\sigma$ and has image
$\cR_K^{I, J}$. Parts (2), (3), (4) now
follow from Lemma \ref{5.1}.
\end{proof}

\section{Semisimplicity and endomorphism
algebras}

Rognerud [Rog19, 7.3] gave a quick proof
of Bouc's special case of Theorem
\ref{1.4} by showing that the given category
is locally semisimple if and only if the
endomorphism algebras of all the
objects are semisimple. In this section,
we shall pursue a similar theme, though
the results are not directly analogous.

The following observations, employed
only incidentally in the next proof, will
be used more substantially in the next
section. Let $R$, $S$, $T$ be groups
and $U \in \cS(R, S)$ and $V \in \cS(S, T)$.
The {\bf opposite} of $U$ is defined to
be the morphism $U^\circ \in \cS(S, R)$
such that $U^\circ = \{ s {\times} r :
r {\times} s \in U \}$. Plainly,
$(U * V)^\circ = V^\circ * U^\circ$.
Now let $F$, $G$, $H$ be finite groups.
Let $U \in \cS(F, G)$ and $V \in \cS(G, H)$.
Then $\sigma(U, V) = \sigma(V^\circ,
U^\circ)$. So, now taking $F$, $G$, $H$
to be elements of $\cK$, there is a
self-inverse antiautomorphism
$\Lambda \ni u^\circ \leftrightarrow
u \in \Lambda$ given by
$(s^{F, G}_U)^\circ = s_{U^\circ}^{G, F}$.

\begin{lem}
\label{6.1}
Let $F$, $G$, $H$ be finite groups. Let
$I \in \cS(F, G)$ and $J \in \cS(G, H)$.

\noin {\bf (1)} Suppose $I$ and $J$ have
the form $I = \Delta(A, 1, \phi, 1, B)$ and
$J = \Delta(B, Y, \psi, Z, C)$. Then
$t_I^{F, G} t_J^{G, H} = t_K^{F, H}$ where
$K = \Delta(A, \phi(Y), \uu{\phi} \comp
\psi, Z, C)$ and $\uu{\phi} : A / \phi(Y)
\leftarrow B/Y$ is the isomorphism
induced by $\phi$.

\noin {\bf (2)} Suppose $I$ and $J$ have
the form $I = \Delta(A, X, \phi, Y, B)$
and $J = \Delta(B, 1, \psi, 1, C)$. Then
$t_I^{F, G} t_J^{G, H} = t_K^{F, H}$
where $K = \Delta(A, X, \phi \comp
\uu{\psi}, \psi^{-1}(Y), C)$ and
$\uu{\psi}$ is induced by $\psi$.
\end{lem}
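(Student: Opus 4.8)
The plan is to derive both identities from Theorem~\ref{5.2}, via the description of $\tau_K^{I,J}$ given there. In each part the first step is a Goursat computation: one checks that $(I,J)$ is strongly compatible --- indeed $I^\bullet = B = {}^\bullet J$ in both cases --- and that the subgroup $K$ named in the statement really is $I*J$. For (1), unwinding the definition of $*$ gives $I*J = \{\phi(b)\times c : b\in B,\ c\in C,\ bY = \psi(cZ)\}$, and reading off the Goursat quintuple yields $I*J = \Delta(A,\phi(Y),\uu\phi\comp\psi,Z,C) = K$; for (2) the analogous computation gives $I*J = \Delta(A,X,\phi\comp\uu\psi,\psi^{-1}(Y),C) = K$, where $\psi^{-1}(Y) = \{c\in C : \psi(c)\in Y\}$.

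Granting that, Theorem~\ref{5.2}(1),(2) reduce both claims to showing $\tau_{K'}^{I,J} = \delta_{K',K}$ for all $K'\in\ad(K)$, for which I would use the sum formula of Theorem~\ref{5.2}(4). Consider part (1), where $I = \Delta(\phi)$, so $I_\bullet = {}_\bullet I = 1$. First, every $U\le I$ has $U_\bullet\le I_\bullet = 1$, whence $\sigma(U,V) = \ell(U_\bullet\cap{}_\bullet V) = 1$ for every contributing pair $(U,V)$, so $\tau_{K'}^{I,J}$ is a plain M\"obius sum over $\cR_{K'}^{I,J}$. Second, for $(U,V)\in\cR_{K'}^{I,J}$ we have $A = {}^\bullet K'\le{}^\bullet(U*V)\le{}^\bullet U\le{}^\bullet I = A$, so ${}^\bullet U = {}^\bullet I$; since $I_\bullet = 1$ makes the first projection of $I$ injective, this forces $U = I$. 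The surviving constraints are $V\le J$, ${}^\bullet V = U^\bullet = B$ and $K'\le I*V$. Now $V\mapsto I*V$ --- applying $\phi$ to the left-hand coordinate --- is an order isomorphism from $\{V\le J : {}^\bullet V = B\}$ onto $\{W\le K : {}^\bullet W = A\}$ carrying $J$ to $I*J = K$; and since $K'\le W\le K$ forces ${}^\bullet W = A$ (as ${}^\bullet K' = A = {}^\bullet K$), this identifies $\cR_{K'}^{I,J}$ with the interval $[K',K]$, with the top element $(I,J)$ corresponding to $K$. Hence $\tau_{K'}^{I,J} = \sum_{W\in[K',K]}\mob_{[K',K]}(W,K) = \delta_{K',K}$ by the M\"obius recursion, and summing over $K'\in\ad(K)$ gives $t_I^{F,G} t_J^{G,H} = t_K^{F,H}$.

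For part (2) I would either run the mirror-image argument --- now $J = \Delta(\psi)$ is the graph subgroup, every $V\le J$ has ${}_\bullet V\le{}_\bullet J = 1$ so $\sigma\equiv 1$ again, the constraint $(K')^\bullet = K^\bullet = C$ forces $V = J$, and relabelling the right-hand coordinate via $\psi$ identifies $\cR_{K'}^{I,J}$ with $[K',K]$ --- or deduce it from part (1) using the self-inverse antiautomorphism $(\cdot)^\circ$ of $\Lambda$: since $(\cdot)^\circ$ permutes the square basis via the lattice isomorphism $U\mapsto U^\circ$, it sends $t_I^{F,G}$ to $t_{I^\circ}^{G,F}$, and the pair $(J^\circ,I^\circ) = (\Delta(C,1,\psi^{-1},1,B),\,\Delta(B,Y,\phi^{-1},X,A))$ has exactly the shape handled in part (1). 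Thus part (1) gives $t_{J^\circ}^{H,G} t_{I^\circ}^{G,F} = t_{J^\circ * I^\circ}^{H,F} = t_{(I*J)^\circ}^{H,F} = t_{K^\circ}^{H,F}$, and applying $(\cdot)^\circ$ once more yields $t_I^{F,G} t_J^{G,H} = t_K^{F,H}$.

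The Goursat bookkeeping of the first step is routine. The point that needs genuine care is the identification of $\cR_{K'}^{I,J}$ with the interval $[K',K]$: one must check that the graph hypothesis really forces $U = I$ (resp.\ $V = J$), that the residual relabelling map is an order isomorphism, and that membership in $[K',K]$ automatically supplies the side condition ${}^\bullet W = A$ (resp.\ $W^\bullet = C$) that one would otherwise have to carry around. Once that is in place, the vanishing of $\tau_{K'}^{I,J}$ for $K'\neq K$ is immediate from the M\"obius recursion.
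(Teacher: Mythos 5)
Your proof is correct and follows essentially the same route as the paper's: both force $U=I$ (resp.\ $V=J$) from the adequacy condition ${}^\bullet K'={}^\bullet I$ together with the graph property of $I$, observe that the cocycle $\sigma$ is identically $1$ on the contributing pairs, identify the remaining sum with a M\"obius sum over an interval in a subgroup lattice (the paper uses $[I^\circ * L,\, J]$ below $J$ via the defining formula for $\tau$, you use $[K',K]$ below $K$ via Theorem \ref{5.2}(4) --- the two are carried onto each other by $V\mapsto I*V$), and conclude by the M\"obius recursion; part (2) is likewise handled by symmetry or by the opposite antiautomorphism in both treatments. No gaps.
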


\begin{proof}
Let $I$, $J$, $K$ be as in part (1). Then
$K = I * J$. We have $I = \Delta(\phi)$
and $I^\circ = \Delta(\phi^{-1})$. So
$I * I^\circ = \Delta(A)$ and
$I^\circ * I = \Delta(B)$. Hence,
$I^\circ * K = J$. Let $L \in \cS(F, H)$
be such that $\tau_L^{I, J} \neq 0$.
By part (2) of Theorem \ref{5.2},
$L \in \ad(K)$. The elements of
$\cP_L^{I, J}$ are the pairs $(I, V)$
where $V \leq J$ and $L \leq I * V$.
The condition on $V$ can be expressed
as $I^\circ * L \leq V \leq J$. By the
defining formula, $\tau_L^{I, J} =
\sum_V \mob(V, J)$. Since
$\tau_L^{I, J} \neq 0$, the recursive
characterization of the M\"{o}bius
function yields $I^\circ * L = J$, that
is, $L = K$. Moreover, $\tau_K^{I, J} =
\mob(J, J) = 1$. Part (1) is established.
Part (2) holds by a similar argument.
Alternatively part (2) follows from
part (1) by considering opposite
morphisms.
\end{proof}

\begin{lem}
\label{6.2}
Given $B \leq G \in \cK$, then
$s_{\Delta(B)}^{G, G} = \sum_{Y
\in \cS(B)} t_{\Delta(Y)}^{G, G}$ as
a sum of mutually orthogonal
idempotents of $\End_\Lambda(G)$.
\end{lem}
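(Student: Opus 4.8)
The plan is to derive this as a special case of the product formula in Lemma \ref{6.1}. Write $s = s_{\Delta(B)}^{G, G}$. By definition of the round basis, $s = \sum_{I \in \cS(\Delta(B))} t_I^{G, G}$, and since every subgroup of $\Delta(B) = \{(b, b) : b \in B\}$ is again diagonal, $\cS(\Delta(B)) = \{ \Delta(Y) : Y \in \cS(B) \}$. This gives the stated equality $s_{\Delta(B)}^{G, G} = \sum_{Y \in \cS(B)} t_{\Delta(Y)}^{G, G}$ immediately; the content of the lemma is that the $t_{\Delta(Y)}^{G, G}$ are mutually orthogonal idempotents.

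First I would check idempotency: each $\Delta(Y)$ has the form $\Delta(Y, 1, \id, 1, Y)$, so both factors in the product $t_{\Delta(Y)}^{G, G} t_{\Delta(Y)}^{G, G}$ are covered by Lemma \ref{6.1}(1) with $A = B = Y$, $\phi = \id$, and $Y_{\text{there}} = 1$, $Z = 1$, $C = Y$. The formula gives $K = \Delta(Y, \id(1), \uu{\id} \comp \id, 1, Y) = \Delta(Y)$, so $t_{\Delta(Y)}^{G, G} t_{\Delta(Y)}^{G, G} = t_{\Delta(Y)}^{G, G}$. Next, orthogonality: take $Y, Y' \in \cS(B)$ with $Y \neq Y'$. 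Applying Lemma \ref{6.1}(1) to $I = \Delta(Y) = \Delta(Y, 1, \id, 1, Y)$ and $J = \Delta(Y') = \Delta(Y', 1, \id, 1, Y')$ requires the ``middle'' groups to match, i.e.\ $I^\bullet = {}^\bullet J$ would need $Y = Y'$. When $Y \neq Y'$ the pair $(\Delta(Y), \Delta(Y'))$ is not strongly compatible, so by part (3) of Theorem \ref{5.2} we get $t_{\Delta(Y)}^{G, G} t_{\Delta(Y')}^{G, G} = 0$. (Alternatively, one can argue directly: $\Delta(Y) * \Delta(Y') = \Delta(Y \cap Y')$, and a short M\"{o}bius-function computation via the defining formula for $\tau_K^{I,J}$ shows all coefficients vanish unless $Y = Y'$.)

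Finally, since $\id_G^\Lambda = s_{\Delta(G)}^{G, G} = \sum_{Y \in \cS(G)} t_{\Delta(Y)}^{G, G}$ (the case $B = G$), and the $t_{\Delta(Y)}^{G, G}$ for $Y \in \cS(G)$ are pairwise orthogonal idempotents summing to the identity, any subfamily indexed by $\cS(B) \subseteq \cS(G)$ sums to an idempotent; that idempotent is $s_{\Delta(B)}^{G, G}$ by the basis expansion above. I expect the only mild obstacle to be bookkeeping in the application of Lemma \ref{6.1}(1) — confirming that the induced isomorphism $\uu{\id}$ on $Y/1$ really is the identity and that the strong-compatibility hypothesis is exactly the obstruction to $Y \neq Y'$ — but these are routine once the notation is unwound, so a clean reference to Theorem \ref{5.2}(3) handles the orthogonality with no genuine computation.
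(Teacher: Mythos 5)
Your proof is correct and takes essentially the same approach as the paper's (which is a one-liner: "immediate from the previous lemma"). You unpack what that entails: the basis expansion of $s_{\Delta(B)}^{G,G}$ follows from the definition of the round basis together with the observation that every subgroup of $\Delta(B)$ is diagonal; idempotency of each $t_{\Delta(Y)}^{G,G}$ follows from Lemma \ref{6.1}(1); and orthogonality for $Y \neq Y'$ follows from Theorem \ref{5.2}(3), since $(\Delta(Y),\Delta(Y'))$ is not strongly compatible. The only point worth flagging is that the paper's cryptic "immediate from the previous lemma" really does seem to require Theorem \ref{5.2}(3) in addition to Lemma \ref{6.1} for the orthogonality (Lemma \ref{6.1} only describes products when the middle groups match, and is silent when they do not), so your explicit appeal to Theorem \ref{5.2}(3) is the right move rather than an extra one.
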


\begin{proof}
This is immediate from the previous
lemma.
\end{proof}

\begin{lem}
\label{6.3}
Given $A \leq F \in \cK \ni G \geq B$,
then $\{ s_U^{F, G} : U \in \cS(A, B) \}$
and $\{ t_I^{F, G} : I \in \cS(A, B) \}$ are
$\kay$-bases for $s_{\Delta(A)}^{F, F} .
\Lambda(F, G) . s_{\Delta(B)}^{G, G}$.
\end{lem}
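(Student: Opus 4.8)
The plan is to verify the claim about bases for the corner module $s_{\Delta(A)}^{F,F} . \Lambda(F,G) . s_{\Delta(B)}^{G,G}$ by first identifying that corner module as a span of square basis elements, then transferring the statement to the round basis via the unitriangular change of coordinates.

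First I would pin down which subgroups $U \in \cS(F,G)$ satisfy $s_{\Delta(A)}^{F,F} . s_U^{F,G} . s_{\Delta(B)}^{G,G} = s_U^{F,G}$ (up to an invertible scalar). Using Proposition \ref{2.5} and the definition of the star product, $\Delta(A) * U$ is the subgroup of $F \times G$ obtained from $U$ by intersecting the $F$-side with $A$; more precisely ${}^\bullet(\Delta(A) * U) = {}^\bullet U \cap A$, and the cocycle factor $\sigma(\Delta(A), U) = \ell(\Delta(A)_\bullet \cap {}_\bullet U) = \ell(1) = 1$ since $\Delta(A)_\bullet = 1$. So $s_{\Delta(A)}^{F,F} . s_U^{F,G} = s_{\Delta(A)*U}^{F,G}$, and this equals $s_U^{F,G}$ precisely when ${}^\bullet U \leq A$, i.e. when $U \in \cS(A,G)$. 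Symmetrically, using that $U * \Delta(B)$ restricts the $G$-side of $U$ to $B$ and $\sigma(U,\Delta(B)) = \ell(U_\bullet \cap {}_\bullet\Delta(B)) = \ell(U_\bullet \cap 1) = 1$, right multiplication by $s_{\Delta(B)}^{G,G}$ fixes $s_U^{F,G}$ exactly when $U^\bullet \leq B$, i.e. $U \in \cS(F,B)$. Hence $s_{\Delta(A)}^{F,F} . \Lambda(F,G) . s_{\Delta(B)}^{G,G}$ has square basis $\{ s_U^{F,G} : U \in \cS(A,B)\}$, where I am writing $\cS(A,B)$ for those $U \leq F \times G$ with ${}^\bullet U \leq A$ and $U^\bullet \leq B$; note every such $U$ is literally a subgroup of $A \times B$, so this agrees with the notation in the statement.

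Next I would pass to the round basis. By the defining triangular relations $t_I^{F,G} = \sum_{U \in \cS(I)} \mob(U,I) s_U^{F,G}$ and $s_U^{F,G} = \sum_{I \in \cS(U)} t_I^{F,G}$, the $\kay$-span of $\{ s_U^{F,G} : U \leq A \times B\}$ equals the $\kay$-span of $\{ t_I^{F,G} : I \leq A \times B\}$: any subgroup $I$ occurring in the expansion of $s_U^{F,G}$ with $U \leq A \times B$ is itself $\leq A \times B$ since $I \leq U$, and conversely. So both $\{ s_U^{F,G}\}$ and $\{ t_I^{F,G}\}$ over $U, I \in \cS(A,B)$ span the same $\kay$-submodule, and each is clearly $\kay$-linearly independent (being a subset of a basis of $\Lambda(F,G)$). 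This shows both sets are $\kay$-bases for that submodule, and combined with the previous paragraph's identification of the submodule with the corner module, the lemma follows.

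The only point requiring genuine care — the main obstacle, such as it is — is the bookkeeping in the first paragraph: confirming that the star products $\Delta(A)*U$ and $U*\Delta(B)$ behave as claimed on the Goursat parameters, and that the relevant $\sigma$-cocycle values are all $1$ because one of the arguments has trivial lower subgroup on the pertinent side. This is a direct unwinding of Proposition \ref{2.5} and the definition of $\sigma$ in Section 2, entirely parallel to the computation already carried out in Lemma \ref{3.3} (where $\sigma(I,J) = \ell(Y)$ arose the same way), so no new ideas are needed; it is just a matter of writing the parameter identities cleanly.
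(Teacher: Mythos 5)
Your proof is correct, but it takes a genuinely different route from the paper's. The paper works directly in the round basis: using Lemma \ref{6.2} to write $s_{\Delta(A)}^{F,F}$ and $s_{\Delta(B)}^{G,G}$ as sums of round idempotents $t_{\Delta(X)}^{F,F}$, $t_{\Delta(Y)}^{G,G}$, then invoking part (3) of Theorem \ref{5.2} (strong compatibility) together with Lemma \ref{6.1} to show that $s_{\Delta(A)}^{F,F}\, t_{I'}^{F,G}\, s_{\Delta(B)}^{G,G}$ vanishes unless $I' \in \cS(A,B)$, in which case it equals $t_{I'}^{F,G}$. You instead stay in the square basis: you compute the idempotent projections directly from the star product and the cocycle, getting $s_{\Delta(A)}^{F,F}\, s_U^{F,G}\, s_{\Delta(B)}^{G,G} = s_{U \cap (A \times B)}^{F,G}$, which pins down the corner module as the span of $\{s_U^{F,G} : U \leq A \times B\}$ without invoking the round-basis machinery; you then transfer to the round basis using only the downward-triangular M\"obius change of coordinates (each $s_U$ is a combination of $t_I$ with $I \leq U$ and conversely, so spans over a down-closed family of indices agree). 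Your approach is more elementary, avoiding Theorem \ref{5.2} and Lemma \ref{6.1} entirely; the paper's is terser and reuses machinery needed nearby anyway. Both are sound. One small point: when you conclude that the corner module has square basis $\{s_U^{F,G} : U \in \cS(A,B)\}$, you should say explicitly that for arbitrary $U$ the conjugation by the two idempotents lands on $s_{U \cap (A\times B)}^{F,G}$, which is already in the claimed span — you have all the ingredients for this, but the ``hence'' slightly elides it.
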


\begin{proof}
By the previous two lemmas and
part (3) of Theorem \ref{5.2},
$s_{\Delta(A)}^{F, F} t_{I'}^{F, G}
s_{\Delta(B)}^{G, G} = 0$ for all
$I' \in \cS(F, G) - \cS(A, B)$. The
rest of the argument is easy.
\end{proof}

The next result supplies a technique
for investigating the condition that
$\Lambda$ is semisimple.

\begin{pro}
\label{6.4}
Let $\cL, \cM \subseteq \cK$, let
$k : \cL \leftarrow \cM$ be a function
and, for each $G \in \cM$, let
$\kappa_G : k(G) \leftarrow G$ be
a group monomorphism. Then there
is a corner embedding
$\Lambda_{\cL} \leftarrow
\Lambda_{\cM}$ given by
$s_{(\kappa_F \times \kappa_G)(U)}^{k(F),
k(G)} \mapsfrom s_U^{F, G}$ for
$F, G \in \cM$ and $U \in \cS(F, G)$.
\end{pro}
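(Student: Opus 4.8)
The plan is to verify, in order, that the prescribed $\kay$-linear map $\iota : \Lambda_\cM \to \Lambda_\cL$ is well-defined, multiplicative (up to the cocycle), injective, and has image a corner subalgebra. For the first three points I would work with the square basis. The map sends $s_U^{F,G}$ to $s_{(\kappa_F \times \kappa_G)(U)}^{k(F),k(G)}$; since each $\kappa_G$ is a monomorphism, $(\kappa_F \times \kappa_G)(U)$ is a subgroup of $k(F) \times k(G)$ isomorphic to $U$, and distinct $U$ give distinct images because $\kappa_F \times \kappa_G$ is injective and we can recover $U$ as the preimage. So $\iota$ is a well-defined injective $\kay$-linear map of the underlying $\kay$-modules.

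The main computational step is multiplicativity. Fix $F, G, H \in \cM$, $U \in \cS(F,G)$, $V \in \cS(G,H)$, and write $U' = (\kappa_F \times \kappa_G)(U)$, $V' = (\kappa_G \times \kappa_H)(V)$. I would first check that the star product is compatible with applying monomorphisms: because $\kappa_G$ is injective, an element $\kappa_F(r) \times \kappa_H(t)$ lies in $U' * V'$ precisely when there is $s \in G$ with $r \times s \in U$ and $s \times t \in V$, so $U' * V' = (\kappa_F \times \kappa_H)(U * V)$. This is the crucial place where injectivity of $\kappa_G$ on the ``middle'' object is used; I expect this to be the heart of the argument, though it is routine once set up. Then I would check the cocycle values agree, $\sigma(U', V') = \sigma(U, V)$. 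Using the Goursat parameters, $U'_\bullet = \kappa_G(U_\bullet)$ and ${}_\bullet V' = \kappa_G({}_\bullet V)$ — again by injectivity of $\kappa_G$ — so $U'_\bullet \cap {}_\bullet V' = \kappa_G(U_\bullet \cap {}_\bullet V)$, whence $\sigma(U', V') = \ell(U'_\bullet \cap {}_\bullet V') = \ell(U_\bullet \cap {}_\bullet V) = \sigma(U, V)$. Combining, $\iota(s_U^{F,G}) \, \iota(s_V^{G,H}) = \sigma(U,V)\, s_{(\kappa_F \times \kappa_H)(U*V)}^{k(F),k(H)} = \iota(\sigma(U,V)\, s_{U*V}^{F,H}) = \iota(s_U^{F,G} s_V^{G,H})$, and $\iota$ respects products of incompatible basis elements trivially. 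Hence $\iota$ is a ring monomorphism.

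It remains to see that $\iota(\Lambda_\cM)$ is a corner subalgebra of $\Lambda_\cL$, i.e. $\iota(\Lambda_\cM) \supseteq \iota(\Lambda_\cM)\, \Lambda_\cL\, \iota(\Lambda_\cM)$. Since $\iota(\Lambda_\cM)$ is spanned by the $s_{(\kappa_F \times \kappa_G)(U)}^{k(F),k(G)}$ with $F, G \in \cM$, it suffices to compute a product $s_{U'}^{k(F),k(G)}\, s_P^{k(G),k(G')}\, s_{V'}^{k(G'),k(H)}$ for $F,G,G',H \in \cM$, arbitrary $P \in \cS(k(G),k(G'))$, and $U', V'$ as above. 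The key observation is that $s_{U'}^{k(F),k(G)}\, s_P^{k(G),k(G')} = \sigma(U',P)\, s_{U' * P}^{k(F),k(G')}$, and since $U' \leq \kappa_F(F) \times \kappa_G(G)$, every element $r \times s$ of $U' * P$ has $r \in \kappa_F(F)$; similarly on the right, every element $s \times t$ of $(U'*P)*V'$ has $t \in \kappa_H(H)$. Thus the triple product is a scalar multiple of some $s_Q^{k(F),k(H)}$ with $Q \leq \kappa_F(F) \times \kappa_H(H)$, and setting $Q_0 = (\kappa_F \times \kappa_H)^{-1}(Q) \in \cS(F,H)$ we get $Q = (\kappa_F \times \kappa_H)(Q_0)$, so the product lies in $\iota(\Lambda_\cM)$. (Here one also uses $\id_{k(G)}^\Lambda = s_{\Delta(k(G))}^{k(G),k(G)} \ne \iota(\id_G^\Lambda)$ in general, which is why we get a corner embedding rather than a unital one.) By $\kay$-bilinearity this extends to all of $\iota(\Lambda_\cM)\,\Lambda_\cL\,\iota(\Lambda_\cM)$, completing the proof. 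I do not anticipate any serious obstacle; the only care needed is to consistently exploit injectivity of each $\kappa_G$ exactly at the points where the middle group enters a star product or a cocycle value.
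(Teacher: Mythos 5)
Your proof is correct, and it is considerably more detailed than the paper's, which consists of the single sentence ``By the previous lemma, the specified algebra monomorphism is a corner embedding,'' taking multiplicativity for granted and relying entirely on Lemma \ref{6.3} for the corner property. You verify multiplicativity from scratch (the identities $U'*V' = (\kappa_F\times\kappa_H)(U*V)$, $U'_\bullet\cap{}_\bullet V' = \kappa_G(U_\bullet\cap{}_\bullet V)$, $\sigma(U',V')=\sigma(U,V)$), which is the content the paper silently assumes; that portion is essentially forced and matches what anyone would write. Where you genuinely diverge is the corner-subalgebra step: the paper's intended route is to note via Lemma \ref{6.3} that $\iota(\Lambda_\cM(F,G))$ equals the sandwich $s_{\Delta(\kappa_F(F))}^{k(F),k(F)}\,\Lambda_\cL(k(F),k(G))\,s_{\Delta(\kappa_G(G))}^{k(G),k(G)}$, whereupon $\iota(\Lambda_\cM)\,\Lambda_\cL\,\iota(\Lambda_\cM)\subseteq\iota(\Lambda_\cM)$ follows because the flanking factors already absorb those idempotents. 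You instead argue directly on square-basis elements, observing that in a product $s_{U'}\,s_P\,s_{V'}$ the resulting subgroup lands inside $\kappa_F(F)\times\kappa_H(H)$ and hence is $(\kappa_F\times\kappa_H)$ of something in $\cS(F,H)$. Your route is more elementary and self-contained, avoiding Lemma \ref{6.3} entirely; the paper's route is shorter and reuses machinery it had already set up. Both are sound, and your parenthetical remark about $\iota(\id_G^\Lambda)\neq\id_{k(G)}^{\Lambda_\cL}$ correctly pinpoints why one gets a corner embedding rather than a unital one.
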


\begin{proof}
By the previous lemma, the specified
algebra monomorphism is a corner
embedding.
\end{proof}

\begin{cor}
\label{6.5}
Given $G \in \cK$, then the algebra
$\End_{\kay \cS}(G) = \kay \cS(G, G)$
is semisimple if and only if $G$ is
trivial. In particular, the category
algebra $\kay \cS_\cK$ is locally
semisimple if and only if every group
in $\cK$ is trivial.
\end{cor}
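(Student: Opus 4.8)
The plan is to prove the first assertion --- that $\End_{\kay\cS}(G)$ is semisimple precisely when $G$ is trivial --- and then deduce the global statement. Note at the outset that $\kay\cS$ is the instance of $\kay_\sigma\cS$ obtained by taking $\ell$ to be the trivial monoid homomorphism, so that $\sigma \equiv 1$; every result cited below holds for an arbitrary fixed $\ell$, hence in particular for this one. The easy direction is immediate: if $G$ is trivial then $\cS(G, G) = \{ 1 {\times} 1 \}$, so $\End_{\kay\cS}(G) = \kay\cS(G, G) = \kay$, a field, hence semisimple. For the global statement, if every group in $\cK$ is trivial then every finite full subcategory of $\cS_\cK$ has exactly one morphism in each hom-set, so its category algebra is a matrix algebra over $\kay$; local semisimplicity of $\kay\cS_\cK$ then follows from Remark \ref{2.3}. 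Conversely, if $\kay\cS_\cK$ is locally semisimple then, again by Remark \ref{2.3}, each $\kay\cS_{\{G\}} = \End_{\kay\cS}(G)$ is semisimple, whence each $G \in \cK$ is trivial by the first assertion.

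The content lies in the converse of the easy direction: for nontrivial $G$, the algebra $\End_{\kay\cS}(G)$ is not semisimple. First I would choose a prime $q$ dividing $|G|$ and a subgroup $C \leq G$ with $C \cong C_q$. The element $e = s_{\Delta(C)}^{G, G}$ is an idempotent of the unital, finite-dimensional (hence Artinian) $\kay$-algebra $\End_{\kay\cS}(G)$. By Lemma \ref{6.3} with $F = G$ and $A = B = C$, the corner subalgebra $e\,\End_{\kay\cS}(G)\,e$ has $\kay$-basis $\{ s_U^{G, G} : U \leq C {\times} C \}$; since the star product of two subgroups of $C {\times} C$ is again computed inside $C {\times} C$ and $\sigma \equiv 1$, the assignment $s_U^{G, G} \mapsto s_U^{C, C}$ is an algebra isomorphism $e\,\End_{\kay\cS}(G)\,e \cong \End_{\kay\cS}(C) \cong \End_{\kay\cS}(C_q)$. (Equivalently, this is the corner embedding supplied by Proposition \ref{6.4}, applied to the set $\cK \cup \{ C \}$ with $\cL = \{ G \}$, $\cM = \{ C \}$, $k(C) = G$ and $\kappa_C$ the inclusion.)

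It therefore suffices to show that $\End_{\kay\cS}(C_q)$ is not semisimple, and this is precisely Example \ref{3.7}(1) in the case $\lambda = \ell(q) = 1$: the element $r = s_{1 \times 1}^{C_q, C_q} - s_{1 \times C_q}^{C_q, C_q} - s_{C_q \times 1}^{C_q, C_q} + s_{C_q \times C_q}^{C_q, C_q}$ spans a nonzero two-sided ideal $\kay r$ with $r^2 = 0$. (If one prefers not to invoke the example, the identities $x \cdot r = r \cdot x \in \kay r$ for each square-basis element $x$ of $\End_{\kay\cS}(C_q)$ --- the four subgroups above together with the diagonals $\Delta(\delta)$ for $\delta \in (\ZZ/q)^\times$ --- follow from a short direct computation of star products.) A nonzero nilpotent ideal precludes semisimplicity, so $\End_{\kay\cS}(C_q)$ is not semisimple. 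Since $e\,\End_{\kay\cS}(G)\,e$ is a corner subalgebra of the Artinian algebra $\End_{\kay\cS}(G)$, we have $J(e\,\End_{\kay\cS}(G)\,e) = e\,J(\End_{\kay\cS}(G))\,e$; hence, if $\End_{\kay\cS}(G)$ were semisimple, then $e\,\End_{\kay\cS}(G)\,e \cong \End_{\kay\cS}(C_q)$ would be semisimple too, a contradiction. Therefore $\End_{\kay\cS}(G)$ is not semisimple. The one genuinely computational ingredient is the square-zero ideal in $\End_{\kay\cS}(C_q)$ (Example \ref{3.7}(1)); all the rest is the routine corner-subalgebra reduction, and I anticipate no obstacle beyond carrying that bookkeeping through with care.
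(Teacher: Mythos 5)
Your proof is correct and follows essentially the same route as the paper: reduce to a prime-order subgroup $C\leq G$ via the corner embedding of Proposition \ref{6.4} (equivalently the explicit corner subalgebra from Lemma \ref{6.3}), invoke the square-zero ideal of Example \ref{3.7}(1) with $\ell(q)=1$, and conclude via the behaviour of the Jacobson radical under passage to corners. The only difference is that you spell out the deduction of the global statement from Remark \ref{2.3}, which the paper leaves implicit.
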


\begin{proof}
Putting $\ell = \id_{\NN - \{ 0 \} }$,
then $\Lambda = k \cS_\cK$. If $G$
is trivial then $\End_{\kay \cS}(G)
\cong \kay$. Suppose $G$ is
non-trivial and let $A$ be a prime
order subgroup of $G$. We may
assume that $\cK = \{ A, G \}$.
Putting $\cL = \{ G \}$ and $\cM =
\{ A \}$, the latest proposition implies
that $\End_{\kay S}(A)$ is isomorphic
to a corner subalgebra of
$\End_{\kay \cS}(G)$. In Example
\ref{3.7}, we saw that
$\End_{\kay \cS}(A)$ is not
semisimple.
\end{proof}

The next two corollaries have similar
proofs, which become easy after first
noting that we can reduce to the case
where $\cK$ is finite.

\begin{cor}
\label{6.6}
Suppose $\cK$ has an element $H$
such that every element of $\cK$ is
isomorphic to a subgroup of $H$.
Then $\Lambda$ is locally semisimple
if and only if $\End_\Lambda(H)$ is
semisimple.
\end{cor}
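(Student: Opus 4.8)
The plan is to reduce at once to the case where $\cK$ is finite, and then to exhibit $\Lambda$ as a corner subalgebra of $\End_\Lambda(H)$ by a single application of Proposition \ref{6.4}. The forward implication needs no reduction: $\End_\Lambda(H) = \id_H^\Lambda . \Lambda . \id_H^\Lambda$ is a unital corner subalgebra of $\Lambda$, so if $\Lambda$ is locally semisimple then $\End_\Lambda(H)$ is semisimple by the very definition of local semisimplicity.

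For the converse, suppose $\End_\Lambda(H)$ is semisimple. By Remark \ref{2.3}, it suffices to show that $\Lambda_\cO$ is semisimple for every finite $\cO \subseteq \cK$. First I would enlarge $\cO$ to contain $H$: since $\Lambda_\cO$ is a unital corner subalgebra of $\Lambda_{\cO \cup \{ H \}}$ and semisimplicity of unital rings passes to corner subalgebras, it is enough to handle $\cO \cup \{ H \}$ in place of $\cO$. I would also note that $\End_{\Lambda_{\cO \cup \{ H \}}}(H) = \kay_\sigma \cS(H, H) = \End_\Lambda(H)$ is independent of the ambient object set, so it remains semisimple. Thus the problem reduces to the case in which $\cK$ is finite, $H \in \cK$, and every element of $\cK$ is isomorphic to a subgroup of $H$.

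In that case I would apply Proposition \ref{6.4} with $\cM = \cK$, with $\cL = \{ H \}$, with $k : \cK \to \{ H \}$ the constant function, and, for each $G \in \cK$, with $\kappa_G : H \leftarrow G$ chosen to be any group monomorphism (one exists by hypothesis). The proposition produces a corner embedding $\End_\Lambda(H) = \Lambda_{\{ H \}} \leftarrow \Lambda_\cK = \Lambda$, so $\Lambda$ is isomorphic to a corner subalgebra of the semisimple unital algebra $\End_\Lambda(H)$ and is therefore semisimple. I do not expect any genuine obstacle here: the one point requiring a moment's care is the reduction step, namely the observations that $\End_\Lambda(H) = \id_H . \Lambda . \id_H$ is literally unchanged as the object set varies among finite sets containing $H$, and that a corner subalgebra of a semisimple unital algebra is again semisimple — both facts are built into the framework of Section 2. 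Everything else is a direct appeal to Proposition \ref{6.4}.
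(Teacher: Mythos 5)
Your forward direction and the reduction to finite $\cK$ containing $H$ are both fine, but the final step of the converse direction is a genuine error. Proposition \ref{6.4}, as applied with $\cL = \{H\}$, $\cM = \cK$, and $k$ the constant map, does not produce a ring homomorphism, let alone a corner embedding. The prescribed map sends $s_U^{F,G} \mapsto s_{(\kappa_F \times \kappa_G)(U)}^{H,H}$, but if $F \ne G$ are distinct elements of $\cK$, then $\id_F^\Lambda$ and $\id_G^\Lambda$ are orthogonal idempotents of $\Lambda_\cK$, whereas their images $s_{\Delta(\kappa_F(F))}^{H,H}$ and $s_{\Delta(\kappa_G(G))}^{H,H}$ in $\End_\Lambda(H)$ are not orthogonal, so products are not preserved; the map also collapses, for example, all of the elements $s_1^{F,G}$ onto $s_1^{H,H}$, so it is not injective either. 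Indeed a dimension count makes the conclusion impossible from the start: when $|\cK| > 1$ we have $\dim \Lambda_\cK = \sum_{F,G \in \cK} |\cS(F \times G)| > |\cS(H \times H)| = \dim \End_\Lambda(H)$, so $\Lambda_\cK$ cannot embed into $\End_\Lambda(H)$ at all. Proposition \ref{6.4} is only coherent when $k$ is injective (as in its application in Corollary \ref{6.5}, where $\cL$ and $\cM$ are singletons); then the images of the $\id_G^{\Lambda_\cM}$ land on mutually orthogonal idempotents of $\Lambda_\cL$ and the image is the corner $e \Lambda_\cL e$ with $e = \sum_{G \in \cM} s_{\Delta(\kappa_G(G))}^{k(G),k(G)}$.

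The correct route goes in the other direction: instead of trying to squeeze $\Lambda$ into $\End_\Lambda(H)$, one shows that $\id_H$ is a full idempotent of $\Lambda$, so that $\Lambda$ and $\id_H \Lambda \id_H = \End_\Lambda(H)$ are Morita equivalent. For each $G \in \cK$, pick a monomorphism $\kappa_G : H \leftarrow G$ and set $U_G = \Delta(\kappa_G) \in \cS(H,G)$; then $U_G^\circ * U_G = \Delta(G)$, $\sigma(U_G^\circ, U_G) = 1$, and hence $\id_G = s_{U_G^\circ}^{G,H} s_{U_G}^{H,G} = s_{U_G^\circ}^{G,H} \, \id_H \, s_{U_G}^{H,G} \in \Lambda \id_H \Lambda$. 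Summing over $G \in \cK$ (finite) gives $1_\Lambda \in \Lambda \id_H \Lambda$, so $\Lambda \id_H \Lambda = \Lambda$. Equivalently, for the finite-dimensional case one can argue directly on the Jacobson radical: $\id_H J(\Lambda) \id_H = J(\End_\Lambda(H)) = 0$, hence $J(\Lambda) = \Lambda \id_H \Lambda \, J(\Lambda) \, \Lambda \id_H \Lambda \subseteq \Lambda \, \id_H J(\Lambda) \id_H \, \Lambda = 0$, so $\Lambda$ is semisimple.
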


\begin{cor}
\label{6.7}
Suppose, for all $F, G \in \cK$, there
exists $H \in \cK$ such that $F$ and
$G$ are isomorphic to subgroups of
$H$. Then $\Lambda$ is locally
semisimple if and only if every object
of $\Lambda$ has a semisimple
endomorphism algebra.
\end{cor}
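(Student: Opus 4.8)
The plan is to reduce to the case of finite $\cK$ and then realize each finite piece of $\Lambda$ as a corner subalgebra of a single endomorphism algebra, via Proposition~\ref{6.4}. One implication needs no hypothesis on $\cK$: if $\Lambda$ is locally semisimple then, for each object $G$, the endomorphism algebra $\End_\Lambda(G) = \Lambda_{\{G\}}$ is a unital corner subalgebra of $\Lambda$ (unital because $\{G\}$ is finite, a corner subalgebra because it has the form $\cC_\cO$), hence semisimple by the definition of local semisimplicity. So all the content is in the converse.

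For the converse, assume $\End_\Lambda(G)$ is semisimple for every $G \in \cK$. By Remark~\ref{2.3} it suffices to show that $\Lambda_\cO$ is semisimple for each finite subset $\cO = \{G_1, \dots, G_n\}$ of $\cK$. The cases $n \le 1$ are immediate, so take $n \ge 2$. First I would construct a common overgroup inside $\cK$: the hypothesis supplies $H_2 \in \cK$ with $G_1$ and $G_2$ isomorphic to subgroups of $H_2$; applying the hypothesis to $H_2$ and $G_3$ gives $H_3 \in \cK$ into which $H_2$ and $G_3$, hence also $G_1, G_2, G_3$, embed; iterating, one obtains $H \in \cK$ such that each $G_i$ is isomorphic to a subgroup of $H$. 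Fix group monomorphisms $\kappa_i \colon G_i \hookrightarrow H$.

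Then I would apply Proposition~\ref{6.4} with $\cL = \{H\}$, with $\cM = \cO$, with $k$ the constant function sending each $G_i$ to $H$, and with the chosen $\kappa_i$. This produces a corner embedding of $\Lambda_\cO$ into $\Lambda_{\{H\}} = \End_\Lambda(H)$. Since $\End_\Lambda(H)$ is semisimple by assumption and semisimplicity is inherited by unital corner subalgebras, $\Lambda_\cO$ is semisimple. As $\cO$ ranges over the finite subsets of $\cK$, Remark~\ref{2.3} then yields local semisimplicity of $\Lambda$, completing the proof.

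The argument is short once Proposition~\ref{6.4} and Remark~\ref{2.3} are in hand. The steps that call for a little care, and the likeliest spots for a slip, are the iterated construction of $H$, where one chains together finitely many ``isomorphic to a subgroup of'' relations (a composite of a monomorphism with an isomorphism is again a monomorphism), and the observation that $\End_\Lambda(H)$, computed inside $\Lambda_\cK$, agrees with the endomorphism algebra of $H$ computed inside $\Lambda_{\{H\}}$, so that the phrase ``semisimple endomorphism algebra'' is unambiguous. The same scheme, using the given $H$ in place of the constructed one and with no iteration needed, proves Corollary~\ref{6.6}.
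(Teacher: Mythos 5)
Your forward implication is fine, and your reduction to finite $\cO$ via Remark~\ref{2.3} and iterated construction of a common overgroup $H \in \cK$ are both correct. The gap is in the application of Proposition~\ref{6.4} with the constant function $k$. When $|\cO| \geq 2$, that $k$ is not injective, and the resulting map is then not a ring homomorphism. Concretely, the orthogonal idempotents $\id_{G_1}^\Lambda$ and $\id_{G_2}^\Lambda$ of $\Lambda_\cO$ are sent to $s_{\Delta(\kappa_1(G_1))}^{H,H}$ and $s_{\Delta(\kappa_2(G_2))}^{H,H}$, whose product in $\End_\Lambda(H)$ is $\sigma(\Delta(\kappa_1(G_1)), \Delta(\kappa_2(G_2))) \, s_{\Delta(\kappa_1(G_1) \cap \kappa_2(G_2))}^{H,H}$, which is never zero. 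So orthogonality is destroyed, multiplication is not preserved, and the proposition cannot be invoked; it implicitly requires $k$ to be injective, and the paper only ever uses it with singleton $\cM$.

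A correct converse can be extracted from the rest of your setup: enlarge the finite $\cO$ so that $H \in \cO$, and show that $e_H = \id_H^\Lambda$ is a \emph{full} idempotent of $\Lambda_\cO$. For $F, G \in \cO$, $U \in \cS(F,G)$, choose a group monomorphism $\kappa : H \leftarrow F$ and let $X = \{ f \times \kappa(f) : f \in F \} \in \cS(F,H)$. Then $X * X^\circ = \Delta(F)$, $X_\bullet = 1$, so $\sigma(X, X^\circ * U) = 1$ and
$$s_U^{F,G} = s_X^{F,H} \, s_{X^\circ * U}^{H,G} \in \Lambda(F,H) \, \Lambda(H,G) \, ,$$
whence $\Lambda_\cO \, e_H \, \Lambda_\cO = \Lambda_\cO$. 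Since $e_H J(\Lambda_\cO) e_H = J(\End_\Lambda(H)) = 0$ by hypothesis, fullness gives $J(\Lambda_\cO) = \Lambda_\cO \, (e_H J(\Lambda_\cO) e_H) \, \Lambda_\cO = 0$, so $\Lambda_\cO$ is semisimple. The same remark applies to your claimed derivation of Corollary~\ref{6.6}.
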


We do not know whether, in analogy
with [Rog19, 7.3], the hypothesis on
$\cK$ in the latest corollary can be
dropped.

\section{Semisimplicity and algebraic
independence}

We prove Theorem \ref{1.1}.

We define the {\bf dual} of a
$\Lambda$-module $M$ to be the
$\Lambda$-module $M^*$ such that,
given $F, G \in \cK$, then $M^*(G)$ is
the dual $\kay$-module of $M(G)$ and,
given $s \in \Lambda(F, G)$, then
the action $s^\circ : M^*(G) \leftarrow
M^*(F)$ is the adjoint of the action
$s : M(F) \leftarrow M(G)$. When $\cK$
is finite, the finite-dimensional
$\kay$-modules $M^*$ and $M$ can
be regarded as mutual duals and, for
any $s \in \Lambda$, the action of
$s^\circ$ on $M^*$ is the adjoint of
the action of $s$ on $M$. We also use
a superscript $*$ to indicate the usual
dual of a module of a group algebra
over $\kay$.

Theorem \ref{3.6} gives the next
lemma, and the subsequent lemma
follows immediately.

\begin{lem}
\label{7.1}
Given an $\cS$-seed $(E, W)$ for $\cK$,
then $S_{E, W}^* \cong S_{E, W^*}$.
\end{lem}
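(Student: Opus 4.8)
The plan is to unwind the defining characterization of $S_{E,W}$ from Theorem~\ref{3.6} and check that $S_{E,W}^*$ satisfies the same three conditions with $W$ replaced by $W^*$. Since the correspondence $[S] \leftrightarrow [E, W]$ is bijective, once the $\cS$-seed of $S_{E,W}^*$ is identified as $(E, W^*)$ (up to equivalence), the isomorphism $S_{E,W}^* \cong S_{E,W^*}$ follows. So everything reduces to verifying conditions (a), (b), (c) of Theorem~\ref{3.6} for the dual module.

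First I would observe that for any $\Lambda$-module $M$ and any $G \in \cK$, we have $M^*(G) \neq 0$ if and only if $M(G) \neq 0$, since $M^*(G)$ is the $\kay$-dual of $M(G)$ and these are finite-dimensional. This gives condition (a) for $S_{E,W}^*$ immediately: $S_{E,W}^*(G) \neq 0 \iff S_{E,W}(G) \neq 0 \iff [E] \leq [G]$. Condition (c) — minimality of $[E]$ — then follows formally, because (c) refers only to which seeds satisfy (a) and (b), and the class of such $[E']$ is the same whatever the associated simple module is; more directly, minimality of $[E]$ for $S_{E,W}^*$ is equivalent to minimality of $[E]$ for $S_{E,W}$ via the same duality correspondence, or one simply invokes that (a) and (c) together are forced once the minimal group is pinned down.

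The substantive step is condition (b). Here I would use the antiautomorphism $\Lambda \ni u^\circ \leftrightarrow u \in \Lambda$ from Section~6, under which $\mu_\phi(\epsilon)^\circ = \mu_\phi(\epsilon^{-1})$ up to the $\ell(Y)$ scalars — more precisely, $\bigl(s_{\Delta(B,Y,\phi^{-1}\epsilon\phi,Y,B)}^{G,G}\bigr)^\circ = s_{\Delta(B,Y,\phi^{-1}\epsilon^{-1}\phi,Y,B)}^{G,G}$, so that $\mu_\phi(\epsilon)^\circ = \mu_\phi(\epsilon^{-1})$ as elements of $\End_\Lambda(G)$. For an idempotent $k$ of $\kay\Aut(E)$, write $k^\circ$ for its image under $\epsilon \mapsto \epsilon^{-1}$, extended $\kay$-linearly; this is an idempotent of the opposite algebra, hence relates to idempotents of $\kay\Aut(E)$ acting on $W^*$. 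The key point is that for a finite-dimensional module $N$ of an algebra $A$ and an idempotent $e \in A$, one has $eN \neq 0 \iff e^\circ N^* \neq 0$, where $e^\circ$ acts on the dual $N^*$ as the adjoint of $e$ on $N$. Applying this to $N = S_{E,W}(G)$ with $e = \mu_\phi(k)$, and to $N = W$ with $e = k$, we get $\mu_\phi(k) S_{E,W}^*(G) \neq 0 \iff \mu_\phi(k)^\circ S_{E,W}(G) \neq 0 \iff$ (by condition (b) for $S_{E,W}$ applied to the idempotent $k^\circ$, noting $\mu_\phi$ is compatible with the $\circ$ operations) $k^\circ W \neq 0 \iff k (W^*) \neq 0$, the last step because $k^\circ W \neq 0 \iff k (W^*) \neq 0$ by the adjointness relation $(k^\circ)^* $ acting on $W^*$ is $k$. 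This last chain of identifications between $k^\circ W \neq 0$ and $k W^* \neq 0$ is the part requiring care.

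I expect the main obstacle to be bookkeeping: keeping straight the several layers of "opposite" — the antiautomorphism of $\Lambda$, the opposite algebra of $\kay\Aut(E)$, and the $\kay$-dual of $W$ — and checking that the map $\mu_\phi$ intertwines the $\circ$ on $\End_\Lambda(G)$ with $\epsilon \mapsto \epsilon^{-1}$ on $\kay\Aut(E)$ exactly as needed, including the behaviour of the $\ell(Y)^{\pm 1}$ normalizing scalars (which, being units, do not affect any nonvanishing statement). Once that is set up cleanly, the verification of (a), (b), (c) for $S_{E,W}^*$ is short, and Theorem~\ref{3.6} together with the bijectivity in Theorem~\ref{3.5} finishes the proof.
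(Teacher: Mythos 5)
Your proposal is correct and follows essentially the same route as the paper, which simply states that Lemma~7.1 is given by Theorem~\ref{3.6}; you supply the verification that the paper leaves implicit, in particular the compatibility $\mu_\phi(\epsilon)^\circ = \mu_\phi(\epsilon^{-1})$ and the observation that nonvanishing of $eN$ and of $e^\circ N^*$ are equivalent for idempotents on finite-dimensional spaces.
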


\begin{lem}
\label{7.2}
Given a finite group $E$, then the simple
$\Lambda$-modules having minimal group
$E$ are all projective if and only if they
are all injective.
\end{lem}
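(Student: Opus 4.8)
The plan is to exploit the duality functor $M \mapsto M^*$ on $\Lambda$-modules, which by Lemma \ref{7.1} sends $S_{E,W}$ to $S_{E,W^*}$, hence permutes the set of simple modules having minimal group $E$ (since $W \mapsto W^*$ permutes the simple $\kay\Aut(E)$-modules). The key observation is that $M \mapsto M^*$ is a contravariant equivalence, so it interchanges projectives and injectives. More precisely, if $P$ is projective then $P^*$ is injective, and conversely; this uses the self-inverse antiautomorphism $u \leftrightarrow u^\circ$ of $\Lambda$ recorded just before Lemma \ref{6.1}, which lets us identify $\Lambda^{\mathrm{op}}$-modules with $\Lambda$-modules and thereby identify the dual of a module with a module over $\Lambda$ itself rather than over $\Lambda^{\mathrm{op}}$.

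First I would observe that it suffices to treat the case where $\cK$ is finite, so that $\Lambda$ is unital and artinian and the usual finite-dimensional module theory applies; the general case follows by the corner-embedding arguments already used (Proposition \ref{2.1}, Remark \ref{3.4}), since projectivity and injectivity of a simple module can be tested after passing to a finite $\cK$ containing $E$ and the relevant group. Next I would note that the antiautomorphism $\circ$ of $\Lambda$ identifies the category of $\Lambda$-modules with the category of right $\Lambda$-modules, under which the contravariant $\kay$-linear duality $\Hom_\kay(-, \kay)$ becomes the functor $M \mapsto M^*$ described at the start of Section 7. Since $\Hom_\kay(-,\kay)$ is an exact contravariant equivalence between finitely generated left and right modules, it carries projective left modules to injective right modules and vice versa; composing with the identification via $\circ$, the functor $M \mapsto M^*$ carries projective $\Lambda$-modules to injective $\Lambda$-modules and injective ones to projective ones.

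Now suppose all simple $\Lambda$-modules with minimal group $E$ are projective. Let $S_{E,W}$ be such a module; then $S_{E,W^*} \cong S_{E,W}^*$ is the dual of a projective, hence injective. But as $W$ ranges over representatives of the simple $\kay\Aut(E)$-modules, so does $W^*$, so $S_{E,W^*}$ ranges over all simple $\Lambda$-modules with minimal group $E$. Hence all of them are injective. The reverse implication is symmetric, using that $M \mapsto M^*$ is its own inverse up to natural isomorphism (finite-dimensionality again) and that it carries injectives to projectives.

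I expect the only real subtlety to be the bookkeeping in the first step — making the reduction to finite $\cK$ precise, and checking that in the non-unital locally artinian setting ``projective'' and ``injective'' for a simple module are detected by a suitable unital corner subalgebra $e\Lambda e$ with $eS \neq 0$, using Proposition \ref{2.1} and Lemma \ref{2.2}. Once one is in the unital finite-dimensional situation, the argument is the standard fact that $\kay$-duality swaps projectives and injectives for a finite-dimensional algebra, transported across the antiautomorphism $\circ$; there is nothing delicate there. So the main obstacle is purely the passage between the local and unital settings, not the duality argument itself.
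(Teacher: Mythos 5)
Your argument is exactly the one the paper has in mind: the paper's ``proof'' of Lemma~7.2 consists of the remark that it ``follows immediately'' from Lemma~7.1, the implicit content being precisely that the $\kay$-linear dual (transported to a left $\Lambda$-module via the antiautomorphism $\circ$) is an exact contravariant self-equivalence permuting the simples with minimal group $E$ and swapping projectives with injectives. You have simply spelled out the standard homological-algebra facts and the reduction to a unital corner that the paper leaves to the reader; there is no genuine difference in approach.
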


For finite groups $E$ and $L$, we write
$\epi(E, L)$ to denote the set of group
epimorphisms $E \leftarrow L$. For
$\phi \in \epi(E, L)$, we write
$\uu{\phi} : E \leftarrow L / \ker(\phi)$
for the isomorphism induced by $\phi$.
Identifying the codomain $E$ of
$\uu{\phi}$ with $E/1$, we define
$$\ltri(\phi) = \Delta(E, 1, \uu{\phi},
  \ker(\phi), L) \in \cS(E, L) \; , \;\;\;\;\;\;
  \rtri(\phi) = \Delta(L, \ker(\phi),
  \uu{\phi}^{-1}, 1, E) \in \cS(L, E) \; .$$
Now suppose $\epi(E, L) \neq
\emptyset$ and $L$ is isomorphic to
a subgroup of an element of $\cK$. We
define a square matrix $T_E^L$ over
$\kay$, with rows and columns
indexed by $\epi(E, L)$, such that,
given $\phi, \psi \in \epi(E, L)$ then
the $(\phi, \psi)$-entry of $T_E^L$ is
$$T_E^L(\phi, \psi) =
  \tau_{\Delta(E)}^{\ltri(\phi), \rtri(\psi)}$$
In other words, to calculate $T_E^L$, we
evaluate $t_{\ltri(\phi)}^{F, G}
t_{\rtri(\psi)}^{G, H}$ as a $\kay$-linear
combination of round basis elements,
whereupon $T_E^L(\phi, \psi)$ is the
coefficient of $t_{\Delta(E)}^{E, E}$. Note
that the matrix $T_E^L$ is well-defined
up to conjugation by permutation
matrices and, as such, $T_E^L$
depends only on the isomorphism
classes of $E$ and $L$.

\begin{thm}
\label{7.3}
Suppose the matrix $T_E^L$ is invertible
for all finite groups $L$ isomorphic to a
subgroup of an element of $\cK$ and
all factor groups $E$ isomorphic to a
quotient group of $L$. Then
$\Lambda$ is locally semisimple.
\end{thm}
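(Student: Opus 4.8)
The plan is to reduce local semisimplicity of $\Lambda$ to a statement about a single projective module, then to recognize the matrices $T_E^L$ as Gram-type matrices controlling when that module is actually projective. By Remark \ref{2.3} we may assume $\cK$ is finite, hence $\Lambda$ is unital. First I would fix a finite group $E$ and examine the simple module $S = S_{E, W}$ for an $\cS$-seed $(E, W)$; by Lemma \ref{7.2} it suffices to show every such $S$ is projective (equivalently injective). Passing to a slightly enlarged $\cK'$ closed under subquotients if needed and using Proposition \ref{2.1} and Remark \ref{3.4}, I may assume $E \in \cK$. The natural candidate for a projective cover ingredient is the module $\Lambda . \id_E^\Lambda$, or rather the corner $s_{\Delta(E)}^{E,E} . \Lambda . s_{\Delta(E)}^{E,E}$-piece of it; by Lemma \ref{6.2}, the idempotent $t_{\Delta(E)}^{E,E}$ is an orthogonal summand of $s_{\Delta(E)}^{E,E}$, and $\Lambda . t_{\Delta(E)}^{E,E}$ is the projective object whose top I want to compare with $S_{E, W}$.

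Next I would make the link between projectivity of $S_{E, W}$ and invertibility of the $T_E^L$. The point is that an injective hull / projective cover argument forces a certain pairing between "epimorphism basis elements" $t_{\ltri(\phi)}^{E, L}$ and $t_{\rtri(\psi)}^{L, E}$ to be nondegenerate. Concretely, for each subgroup $L$ of an element of $\cK$ with $\epi(E, L) \neq \emptyset$, the composites $t_{\ltri(\phi)}^{E, L} t_{\rtri(\psi)}^{L, E}$ expand in the round basis with $t_{\Delta(E)}^{E, E}$-coefficient exactly $T_E^L(\phi, \psi)$ (by definition of $T_E^L$, via part (1) of Theorem \ref{5.2}), while higher-order terms $t_K^{E,E}$ with $[\Theta(K)] < [E]$ lie in $\End_\Lambda(E)_<$ and die on $\oo{S}(E)$. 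So $T_E^L$ is precisely the matrix recording how the relevant morphisms through $L$ act on the top-quotient $\oo{S}(E) \cong W$ paired against their opposites. Invertibility of all the $T_E^L$ then says that no nonzero element of the radical-candidate submodule of $\Lambda . t_{\Delta(E)}^{E,E}$ can be killed by all these pairings, which rules out any proper submodule mapping onto the seed-$(E,W)$ part — forcing the seed component to split off as a direct summand. Running this over all seeds $(E, W)$ and all $E$ gives that $\Lambda$ decomposes as a direct sum of matrix algebras over the $\kay\Aut(E)$-block algebras, i.e. $\Lambda$ is semisimple.

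The technical core — and the main obstacle — is proving that invertibility of the $T_E^L$ really does imply the seed component splits off. I expect this to go through a filtration of $\Lambda . t_{\Delta(E)}^{E,E}$ (or its dual) by the partial order $[\Theta(\cdot)]$ on thoraces, using the triangular structure from Corollary \ref{2.6}: the associated graded pieces are built from the epimorphism morphisms $\ltri(\phi)$, $\rtri(\psi)$, and the $L$-graded piece carries exactly the bilinear form with matrix $T_E^L$. One shows that the sum of the radicals at each graded level is an ideal complementary to the span of the relevant idempotents precisely when every $T_E^L$ is invertible; the nontrivial bookkeeping is checking that the cross terms between different $L$'s, and between $\ltri$/$\rtri$ morphisms and the non-epimorphism part of $\cS(E, L)$, do not interfere — this is where parts (2), (3), (4) of Theorem \ref{5.2} (strong compatibility, adequacy, the retraction $\rho_K^{I,J}$) do the work of confining nonzero products. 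Once the combinatorial vanishing is in hand, a dimension count against Theorem \ref{1.2} (or directly against the block decomposition) closes the argument.
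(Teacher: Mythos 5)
Your high-level reduction matches the paper's: invoke Remark \ref{2.3} to shrink $\cK$, aim to show each $S_{E,W}$ is projective, use Lemma \ref{7.2} for the projective/injective equivalence, enlarge $\cK$ to be subquotient-closed, and recognize $T_E^L(\phi,\psi)$ as the $t_{\Delta(E)}^{E,E}$-coefficient of $t_{\ltri(\phi)}^{E,L} t_{\rtri(\psi)}^{L,E}$. That identification of the role of $T_E^L$ is exactly right and is the heart of the matter. But you have left the technical core as a hope, and your closing step is circular.

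Concretely, three things are missing or wrong. First, you never set up the induction on $[E]$ that drives the argument. The paper inductively assumes that $S_{E',W'}$ is projective (hence injective) for every strict factor group $E'$ of $E$, and uses this to conclude $Q(E)=0$ where $Q$ is the radical of the projective cover $P = \Lambda i$ (here $i$ a primitive idempotent of $\End_\Lambda(E)$ with $iW\neq 0$, a cleaner choice than your $\Lambda\,.\,t_{\Delta(E)}^{E,E}$, which is typically decomposable): the simple factors of $Q$ have smaller minimal group, and being injective they cannot appear in $Q$ without splitting $P$. The step $Q(E)=0$ is essential, because it is what gives $\Lambda(E,G)\,Q(G)=0$ and hence $uvi=0$ later. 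Your ``filtration by thorax'' gestures at this but doesn't isolate it. Second, the reduction to the epimorphism span is genuinely nontrivial and you acknowledge it only as ``bookkeeping.'' The paper takes $G$ of minimal order with $Q(G)\neq 0$, picks $v$ with $vi\in Q(G)\setminus\{0\}$, and runs several distinct eliminations (using $s_V^{G,E}s_D^{E,E}$ and $s_D^{E,E}i=0$ for $\Theta(V)$ small, then expanding in the round basis, then killing the $t_{\Delta(B)}^{G,G}$-components via minimality of $G$ and Lemma \ref{6.3} and part (3) of Theorem \ref{5.2}) to force $v\in\bigoplus_{\psi\in\epi(E,G)}\kay\,t_{\rtri(\psi)}^{G,E}$; only then does a nonzero $Q(G)$ produce a nonzero kernel vector for $T_E^G$. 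Third, your finishing move (``a dimension count against Theorem \ref{1.2}'') is circular: Theorem \ref{1.2} has semisimplicity of $\Lambda$ as a hypothesis. The paper instead closes by deriving a direct contradiction: the pairing $\sum_{\phi,\psi} u(\phi)T_E^G(\phi,\psi)v(\psi)=0$ for all $u$ with $v\neq 0$ contradicts invertibility of $T_E^G$.
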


\begin{proof}
By Remark \ref{2.3}, we may assume
that $\cK$ is closed under subquotients
up to isomorphism. (For those who
prefer to work with finite-dimensional
algebras, the same remark also allows 
us to assume that $\cK$ is finite, though
that step is not needed.) Let $(E, W)$
be an $\cS$-seed for $\cK$. It
suffices to show that the simple
$\Lambda$-module $S_{E, W}$ is
projective. By the closure property of
$\cK$, we can replace $E$ with any
isomorphic copy, so we may assume
that $E \in \cK$. Inductively, we may
also assume that, given any $\cS$-seed
$(E', W')$ such that $E'$ is a strict factor
group of $E$, then $S_{E', W'}$ is
projective. Hence, by Lemma \ref{7.2},
each $S_{E', W'}$ is injective. Write
$\cE = \End_\Lambda(E)$. As usual, we
regard $W$ as a simple $\cE$-module
annihilated by the ideal $\cE_<$. Let
$i$ be a primitive idempotent of $\cE$
such that $iW \neq 0$. Then $i$ is
still primitive as an idempotent of
$\Lambda$ and $i S_{E, W} \neq 0$.
So the indecomposable projective
$\Lambda$-module $P = \Lambda i$
is the projective cover of $S_{E, W}$. We
are to show that $P \cong S_{E, W}$. For
a contradiction, suppose the unique
maximal $\Lambda$-submodule $Q$
of $P$ is non-zero.

The $\cE$-module $P(E) = \cE i$
is the projective cover of $W$. Since
$\kay \Aut(E)$ is semisimple,
$\cE i / \cE_< \cong W$, that is,
$Q(E) \cong \cE_< . i$. But the
inductive assumption implies that,
for any $(E', W')$ as above,
$S_{E', W'}$ cannot be a factor module
of $Q$. Therefore, $Q(E) = 0$ and
$\cE_<$ annihilates $P(E)$.

Let $G$ be of minimal order such that
$Q(G) \neq 0$. Since $Q(G) \leq P(G)
= \Lambda(G, E) i$, there exists
$v \in \Lambda(G, E)$ such that
$vi \in Q(G) - \{ 0 \}$. Now $v$ is a
$\kay$-linear combination of elements
having the form $s_V^{G, E}$ with
$V \in \cS(G, E)$. Fixing $V$, then
$$s_V^{G, E} = s_V^{G, E}
  s_D^{E, E} / \ell(V_\bullet)$$
where $D = \Delta(V^\bullet, V_\bullet,
\id, V^\bullet, V_\bullet)$. If the thorax
$\Theta(V) \cong V^\bullet / V_\bullet
\cong \Theta(D)$ is smaller than $E$,
then $s_D^{E, E} \in \cE_<$, hence
$s_D^{E, E} i = 0$ and
$s_V^{D, D} i = 0$. On the other hand,
if $\Theta(V) \cong E$, then
$V_\bullet = 1$ and $s_V^{G, E}$ is
a $\kay$-linear combination of
elements $t_J^{G, E}$ where
$J_\bullet = 1$. For such $J$, if
$J^\bullet \neq E$, then $t_J^{G, E}$
is a $\kay$-linear combination of
elements $s_{V'}^{G, E}$ with
$\Theta(V')$ smaller than $E$,
whence each $s_{V'}^{G, E} i = 0$
and $t_J^{G, E} i = 0$. So we may
assume that $v$ is a $\kay$-linear
combination of elements having the
form $t_J^{G, E}$ where $J^\bullet/
J_\bullet = E / 1$. Given $B \leq G$,
then $t_{\Delta(B)}^{G, G} =
t_{\Delta(B)}^{G, B} t_{\Delta(B)}^{B, G}$.
By the minimality of $G$ and the
closure assumption on $\cK$, if
$B < G$ then $Q(B) = 0$, whence
$t_{\Delta(B)}^{B, G} vi = 0$ and
$t_{\Delta(B)}^{G, G} vi = 0$. So, by
Lemma \ref{6.3},
$vi = t_{\Delta(G)}^{G, G} vi$. Hence,
by part (3) of Theorem \ref{5.2}, we
may assume that $v$ is a $\kay$-linear
combination of elements
$t_J^{G, E}$ where $J^\bullet /
J_\bullet = E / 1$ and
${}^\bullet J = G$. In other words,
$$v \in \bigoplus_{\psi \in \epi(E, G)}
  \kay \, t_{\rtri(\psi)}^{G, E} \; .$$

Since $\{ t_I^{E, E} : I \in
\cS(E, E)_< \}$ is a $\kay$-basis for
$\cE_<$, any $w \in \cE$ can be
expressed uniquely as
$w = w_< + w_=$ where
$w_< \in \cE_<$ and
$$w_= = \sum_{\epsilon \in \Aut(E)}
  \partial_{\Delta(\epsilon)}(w)
  t_{\Delta(\epsilon)}^{E, E}$$
with each
$\partial_{\Delta(\epsilon)}(w) \in
\kay$. Every simple factor of the
$\Lambda$-module $\Lambda \cE_<$
is isomorphic to $S_{E', W'}$ for some
$(E', W')$ as above. Since none of the
simple factors of $Q$ have that
form, $Q \cap \Lambda \cE_< = 0$.
So $v i_< \not\in Q - \{ 0 \}$. But
$v i \in Q - \{ 0 \}$. Therefore,
$v i_= \neq 0$. We have
$$t_{\rtri(\psi)}^{G, E}
  t_{\Delta(\epsilon)}^{E, E} =
  t_{\rtri(\epsilon^{-1} \psi)}^{G, E}$$
by Lemma \ref{6.1}. So we can write
$$v i_= = \sum_{\psi \in \epi(E, G)}
  v(\psi) t_{\rtri(\psi)}^{G, E}$$
with each $v(\psi) \in \kay$. Note
that $v(\psi) \neq 0$ for some $\psi$.
Let $(u(\phi) : \phi \in \epi(E, G))$ be
any family of elements
$u(\phi) \in \kay$. Write
$$u = {\sum}_\phi u(\phi)
  t_{\ltri(\phi)}^{E, G} \; .$$
Since $vi \in Q(G)$ and $\Lambda(E, G)
Q(G) = Q(E) = 0$, we have $uvi = 0$.
So $uv i_= = - u v i_< \in \cE_<$.
Therefore,
$$0 = \partial_{\Delta(E)}(uv i_=) =
  \sum_{\phi, \psi \in \epi(E, G)}
  u(\phi) T_E^G(\phi, \psi) v(\psi) \; .$$
But the vector $(u(\phi) : \phi)$ is
arbitrary, the vector $(v(\psi) : \psi)$
is non-zero and the matrix $T_E^G$
is invertible. We have obtained a
contradiction, as required.
\end{proof}

To apply the theorem, we shall be
needing another lemma.

\begin{lem}
\label{7.4}
Let $A$, $B$, $C$ be subgroups of finite
groups $F$, $G$, $H$, respectively. Let
$\phi : A \leftarrow B$ and $\psi : C
\leftarrow B$ be group epimorphisms
and $\theta : A \leftarrow C$ a group
isomorphism. Write $I = \ltri(\phi)$,
$J = \rtri(\psi)$, $K = \Delta(\theta)$.
Suppose $K \leq I * J$. Then $(I, J)$ is
compatible and $K \in \ad(I * J)$. We
have $I * J = (\theta {\times} \psi)(B)$.
Letting $S$ run over those subgroups
of $B$ such that $K \leq (\phi {\times}
\psi)(S)$, then
$$\tau_K^{I, J} = \sum_S \mob(S, B)
  \ell(\ker(\phi) \cap S \cap
  \ker(\psi)) \; .$$
Furthermore, if $\ker(\phi) =
\ker(\psi)$, then $K = I * J$ and
$t_I^{F, G} t_J^{G, H} = \tau_K^{I, J}
t_K^{F, H}$.
\end{lem}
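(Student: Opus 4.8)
The plan is to use the explicit description of the star product via Goursat parameters (Proposition~\ref{2.4} and Proposition~\ref{2.5}) to identify $I*J$ directly, then invoke part~(4) of Theorem~\ref{5.2} together with the retraction described in its proof. First I would record the Goursat data: since $\phi\in\epi(A,B)$, we have $I=\ltri(\phi)=\Delta(A,1,\uu\phi,\ker(\phi),B)$, so ${}^\bullet I=A$, ${}_\bullet I=1$, $I_\bullet=\ker(\phi)$, $I^\bullet=B$; similarly $J=\rtri(\psi)=\Delta(B,\ker(\psi),\uu\psi^{-1},1,C)$, so ${}^\bullet J=B$, ${}_\bullet J=\ker(\psi)$, $J_\bullet=1$, $J^\bullet=C$. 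Thus $I^\bullet=B={}^\bullet J$, so $(I,J)$ is strongly compatible, and ${}^\bullet(I*J)={}^\bullet I=A$, $(I*J)^\bullet=J^\bullet=C$. The hypothesis $K\le I*J$ with $K=\Delta(\theta)$ and $\theta:A\leftarrow C$ an isomorphism then forces ${}^\bullet K=A={}^\bullet(I*J)$ and $K^\bullet=C=(I*J)^\bullet$, so $K\in\ad(I*J)$ as claimed. To see $I*J=(\theta\times\psi)(B)$: an element $a\times c$ lies in $I*J$ iff there is $b\in B$ with $a\times b\in I$ and $b\times c\in J$; by the form of $I$ and $J$ this says $a=\uu\phi(b\ker\phi)$ and $c=\uu\psi^{-1}$\,-preimage conditions, which after unwinding says $a$ and $c$ are determined by $b$ via $\phi$ and $\psi$ respectively; comparing with the fact that $K=\Delta(\theta)\le I*J$ pins down the relation between the $\phi$- and $\psi$-images so that $I*J=\{\,\theta\psi(b)\times\psi(b): b\in B\}=(\theta\times\psi)(B)$ --- here I would be slightly careful to check $\theta$ is forced to agree with the isomorphism induced on thoraxes by Proposition~\ref{2.5}.

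Next I would compute $\tau_K^{I,J}$ using part~(4) of Theorem~\ref{5.2}, which expresses it as a M\"obius sum over $\cR_K^{I,J}=\{(U,V)\in\cP_K^{I,J}: U^\bullet={}^\bullet V\}$. Because $I=\Delta(\uu\phi)$ after identifying $A$ with $A/\ker\phi$, the pairs $(U,V)\le(I,J)$ with $K\le U*V$ can be parametrized: $U$ is determined by a subgroup $S\le B$ (namely $U=\Delta(A,1,\uu\phi,\ker\phi\cap S,S)$ restricted appropriately, or more cleanly $U^\bullet=S$), and the compatibility constraint $U^\bullet={}^\bullet V$ together with $K\le U*V$ forces $V$ to be the corresponding "restriction'' of $J$ to $S$; the condition that $K=\Delta(\theta)\le U*V$ becomes exactly $K\le(\phi\times\psi)(S)$, which is the indexing set stated in the lemma. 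Under this identification $\cR_K^{I,J}$ becomes isomorphic as a poset to $\{S\le B: K\le(\phi\times\psi)(S)\}$ ordered by inclusion, so $\mob_{\cR_K^{I,J}}((U_S,V_S),(I,J))=\mob(S,B)$. It then remains to evaluate $\sigma(U_S,V_S)=\ell\big((U_S)_\bullet\cap{}_\bullet V_S\big)$; by the explicit Goursat data, $(U_S)_\bullet=\ker(\phi)\cap S$ and ${}_\bullet V_S=\ker(\psi)\cap S$ (the bottom parameters of the restrictions of $I$ and $J$ to $S$), so $(U_S)_\bullet\cap{}_\bullet V_S=\ker(\phi)\cap S\cap\ker(\psi)$, giving the displayed formula.

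For the final assertion, suppose $\ker(\phi)=\ker(\psi)=:N$. Then for every admissible $S$ the summand is $\mob(S,B)\ell(N\cap S)$, but in fact I would argue more directly: when $N=\ker\phi=\ker\psi$, the pair $(I,J)$ has $I_\bullet=N=\,{}_\bullet J$ aligned so that $I*J$ has $(I*J)_\bullet=1$ and no "collapsing'' occurs beyond $K$; concretely, $K=\Delta(\theta)\le(\theta\times\psi)(B)=I*J$ forces $K=I*J$ precisely because both have the same thorax $A\cong B/N$ and $K$ already has ${}^\bullet K={}^\bullet(I*J)$, $K^\bullet=(I*J)^\bullet$ with trivial bottom parameters, so $K$ and $I*J$ have identical Goursat quintuples. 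Since $K=I*J$, part~(1) of Theorem~\ref{5.2} gives $t_I^{F,G}t_J^{G,H}=\sum_{K'\in\ad(I*J)}\tau_{K'}^{I,J}t_{K'}^{F,H}$, but I would show that $\ad(I*J)=\{K\}$ in this case (adequate subgroups must share the top and bottom parameters ${}^\bullet,{}^\bullet$ and $\bullet,\bullet$ of $I*J$, and with $(I*J)_\bullet=1$ and thorax forced, the only possibility is $I*J$ itself), so the sum collapses to the single term $\tau_K^{I,J}t_K^{F,H}$.

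The main obstacle I anticipate is the bookkeeping in the second paragraph: correctly identifying the poset $\cR_K^{I,J}$ with $\{S\le B: K\le(\phi\times\psi)(S)\}$ and verifying that the M\"obius function transports to $\mob(S,B)$ requires checking that the map $S\mapsto(U_S,V_S)$ is a poset isomorphism onto $\cR_K^{I,J}$ (injectivity, surjectivity, order-preservation in both directions), and that the retraction $\rho_K^{I,J}$ from the proof of Theorem~\ref{5.2} indeed lands in the image I describe. This is routine Goursat-parameter manipulation but is the step where a sign or an off-by-one in the indexing could creep in, so I would do it carefully rather than by analogy.
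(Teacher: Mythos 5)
Your main computation of $\tau_K^{I,J}$ takes essentially the same route as the paper: invoke part~(4) of Theorem~\ref{5.2}, identify $\cR_K^{I,J}$ with the poset of subgroups $S\le B$ satisfying $K\le(\phi\times\psi)(S)$, observe $\sigma(U_S,V_S)=\ell(\ker\phi\cap S\cap\ker\psi)$, and transport the M\"obius function. The paper's own proof parametrizes $\cP_K^{I,J}$ by pairs $(R,T)$ of subgroups of $B$ and exhibits the retraction $\rho_K^{I,J}$ explicitly as $(U_R,V_T)\mapsto(U_S,V_S)$ with $S=R\cap T$, which is a slightly cleaner way of establishing the poset isomorphism you flag as the ``main obstacle''; otherwise the two arguments coincide.

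One step in your preamble, however, would fail as written. You argue that $K\le I*J$ pins things down ``so that $I*J=\{\theta\psi(b)\times\psi(b):b\in B\}=(\theta\times\psi)(B)$.'' But $\{\theta\psi(b)\times\psi(b):b\in B\}$ is exactly $\Delta(\theta)=K$ (since $\psi$ is surjective), so your chain of equalities would force $I*J=K$, which is false in general: for example with $B=C_2\times C_2=\langle x,y\rangle$, $A=C=C_2$, $\phi$ killing $y$, $\psi$ killing $x$, and $\theta$ the identity, one has $I*J=(\phi\times\psi)(B)=A\times C$ strictly containing $K=\Delta(\theta)$. What is actually true, directly from the definition of the star product and without reference to $K$, is $I*J=(\phi\times\psi)(B)=\{\phi(b)\times\psi(b):b\in B\}$; the occurrence of $\theta$ in the lemma's displayed claim appears to be a slip for $\phi$. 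The equality $I*J=K$ does hold in the rider case $\ker\phi=\ker\psi$, and your treatment of that case (showing the Goursat quintuples coincide, then observing $\ad(W)=\{W\}$ whenever $W_\bullet=1={}_\bullet W$) is correct. So the error is localized to the preamble claim and does not propagate into your computation of $\tau_K^{I,J}$, which is sound.
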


\begin{proof}
The preambulatory parts and the rider
are clear. We need only prove the
formula for $\tau_K^{F, G}$. We shall
apply part (4) of Theorem \ref{5.2}.
Write $M = \ker(\phi)$ and
$N = \ker(\psi)$. Let $(R, T)$ run over
the pairs of subgroups $R \leq B \geq T$
such that $K \leq (\phi {\times} \psi)
(R \cap T)$. The discussion of the star
product in Section 2 shows that
$\cP_K^{I, J}$ is the set of pairs having
the form $(U_R, V_T)$, where
$$U_R = \ltri(\phi_R) =
  \Delta(A, 1, \uu{\phi_R}, M \cap R, R)
  \; , \dozspace V_T =
  \rtri(\psi_T) = \Delta(T, T \cap N,
  \uu{\psi_T}^{-1}, 1, C)$$
with $\uu{\phi_R}$ denoting the
isomorphism induced by the restriction
$\phi_R : A \leftarrow R$ of $\phi$,
similarly for $\uu{\psi_T}$ and
$\psi_T : C \leftarrow T$. The retraction
$\rho_K^{I, J}$ in the proof of
Theorem \ref{5.2} is given by
$$\rho_K^{I, J}(U_R, V_T) = ((\Delta(A,
  1, \phi_S, M \cap S, S), \Delta(S, S
  \cap N, \uu{\psi_S}^{-1}, 1, C))$$
where $S = R \cap T$. The pairs
having the form on the left-hand side
are precisely the elements of the
image $\cR_K^{I, J}$ of $\rho_K^{I, J}$,
moreover, the subgroups $S$ arising
in this way are precisely the subgroups
$S$ specified in the assertion.
\end{proof}

Suppose $\ell$ is algebraically
independent with respect to $\cK$.
Let $\Pi_\cK$ be the set of prime
divisors of the orders of the elements
of $\cK$. For each $q \in \Pi_\cK$,
write $\lambda_q = \ell(q)$. The
assumption on $\ell$ is that the
elements $\lambda_q \in \kay$ are
algebraically independent. Let $\cO$
be the integral domain generated over
$\QQ$ by the $\lambda_q$. Any
$\frako \in \cO$ can be expressed
uniquely as a polynomial expression
in $(\lambda_q : q \in \Pi_\cK)$
with coefficents in $\QQ$, so
we may speak of the {\bf degree} of
$\frako$. We call $\frako$ {\bf monic}
provided the associated polynomial
expression has a unique term of
maximal degree and the coefficient
for that term is $1$. For a positive
integer $n$, we write $\len(n)$ for the
length of $n$, we mean, the number
of prime factors up to multiplicity.

We now prove Theorem \ref{1.1}.
It suffices to show that, in the notation
of Theorem \ref{7.3}, the matrix $T_E^L$
is invertible. We shall show that, in fact,
the determinant of $T_E^L$ is a monic
element of $\cO$ with degree
$d^{|\epi(E, L)|}$, where
$d = \len(|L|/|E|)$. As before, we may
assume that $E, L \in \cK$. Let
$\phi \in \epi(E, L)$. By Lemma \ref{7.4},
$$T_E^L(\phi, \phi) = \tau_{\Delta
  (E)}^{\ltri(\phi), \rtri(\phi)} = \sum_{S
  \in \cS(B) : \Delta(E) \leq \phi(S)}
  \mob(S, B) \ell(\ker(\phi) \cap S)$$
which is a monic element of $\cO$
with degree $\len(|\ker(\phi)|) = d$.
Let $\psi \in \epi(E, L) - \{ \phi \}$.
It remains only to show that the
off-diagonal matrix entry
$$T_E^L(\phi, \psi) = \tau_{\Delta
  (E)}^{\ltri(\phi), \rtri(\psi)}$$
is an element of $\cO$ with degree
less than $d$. We may assume that
$T_E^L(\phi, \psi) \neq 0$. Then,
by part (2) of Theorem \ref{5.2},
$\Delta(E) \leq \ltri(\phi) * \rtri(\psi)$.
Bearing in mind that $\phi \neq \psi$,
the rider of Lemma \ref{7.4} implies
that $\ker(\phi) \neq \ker(\psi)$.
By the formula in the lemma,
$T_E^L(\phi, \psi)$ has degree
at most $\len(|\ker(\phi) \cap
\ker(\psi)|) < d$. The proof of
Theorem \ref{1.1} is complete.

\section{The trivial module}

We define the {\bf trivial
$\Lambda$-module} to be the
isomorphically unique simple
$\Lambda$-module $S_{1, 1}$
whose minimal groups are trivial.
In this section, we give some
criteria for $S_{1, 1}$ to be
projective and injective.

The set $\cK$ is finite if and only if the
algebra $\Lambda$ is unital. When that
condition holds, $\Lambda$ is semisimple
if and only if all the block algebras of
$\Lambda$ are simple. For finite $\cK$,
we define the {\bf principal block} of
$\Lambda$ to be the block $b_{1, 1}$
of $\Lambda$ that fixes $S_{1, 1}$.
We call $\Lambda b_{1, 1}$ the
{\bf principal block algebra}.

\begin{lem}
\label{8.1}
The trivial $\Lambda$-module
$S_{1, 1}$ is projective if and only
if $S_{1, 1}$ is injective. When $\cK$
is finite, that condition holds if
and only if the principal block algebra
$\Lambda b_{1, 1}$ is simple.
\end{lem}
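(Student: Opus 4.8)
The plan is to handle the two assertions in turn, each by reduction to material already in place. For the first sentence --- that $S_{1,1}$ is projective if and only if it is injective --- I would invoke Lemma \ref{7.2} with $E$ the trivial group: since $\Aut(1)$ is trivial there is, up to isomorphism, exactly one simple $\kay\Aut(1)$-module, so $S_{1,1}$ is the \emph{unique} simple $\Lambda$-module whose minimal group is trivial, and Lemma \ref{7.2} then says precisely that it is projective if and only if it is injective. (Alternatively one may cite Lemma \ref{7.1}, which gives $S_{1,1}^* \cong S_{1,1}$, together with the fact that a $\Lambda$-module is projective exactly when its dual is injective.) Nothing further is needed here.

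Now suppose $\cK$ is finite, so that $\Lambda$ is unital and finite-dimensional over $\kay$, the block decomposition $\Lambda = \bigoplus_b \Lambda b$ is available, and $b_{1,1}$ is the block acting as the identity on $S_{1,1}$. The direction $(\Leftarrow)$ is immediate: if $\Lambda b_{1,1}$ is simple then, being a finite-dimensional simple ring, it is semisimple, so all of its modules are projective; in particular $S_{1,1}$, viewed as a $\Lambda$-module annihilated by the remaining blocks, is projective.

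For the converse, assume $S_{1,1}$ is projective; by the first part it is also injective. Then $\Ext^1_\Lambda(S_{1,1}, -) = 0$ by projectivity and $\Ext^1_\Lambda(-, S_{1,1}) = 0$ by injectivity, so $S_{1,1}$ is linked by nonzero $\Ext^1$ to no other simple $\Lambda$-module; hence $S_{1,1}$ is the unique simple module in its block, i.e.\ the unique simple $\Lambda b_{1,1}$-module. Writing $J = J(\Lambda b_{1,1})$, it follows that $\Lambda b_{1,1}/J \cong S_{1,1}^{\oplus n}$ for some $n$, a projective module, so the canonical epimorphism $\Lambda b_{1,1} \twoheadrightarrow \Lambda b_{1,1}/J$ splits; thus $J$ is a left-module direct summand of $\Lambda b_{1,1}$, hence of the form $\Lambda b_{1,1} \cdot e$ for an idempotent $e \in J$. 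As $J$ is nilpotent, $e = 0$, so $J = 0$ and $\Lambda b_{1,1}$ is semisimple; being a block, it is indecomposable as a ring, hence simple. Together with the first part, this gives the stated equivalence.

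The point deserving the most care is the claim that $S_{1,1}$ constitutes its own block. I would invoke the standard description of the blocks of a finite-dimensional algebra as the connected components of the graph on isomorphism classes of simple modules, with an edge between $S$ and $T$ whenever $\Ext^1(S,T) \neq 0$ or $\Ext^1(T,S) \neq 0$, and then observe that the projectivity and injectivity of $S_{1,1}$ together remove every edge incident to $S_{1,1}$. The remaining ingredients --- finite-dimensionality of $\Lambda$ when $\cK$ is finite, and the elementary ring-theoretic arguments with the radical --- are routine.
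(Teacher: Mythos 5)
Your proof is correct, and for the first sentence it coincides with the paper's: the paper's entire proof reads ``This follows from Lemma \ref{7.2},'' which (as you note) specializes to $E = 1$ to give the projective--injective equivalence for the single simple $S_{1,1}$ with trivial minimal group. For the second sentence the paper gives no further detail, implicitly treating it as a standard consequence; your fleshed-out argument --- that projectivity plus injectivity kills all $\Ext^1$ groups involving $S_{1,1}$, so $S_{1,1}$ is isolated in the Ext-linkage graph and hence constitutes its own block, after which the splitting of $\Lambda b_{1,1} \twoheadrightarrow \Lambda b_{1,1}/J$ forces $J = 0$ --- supplies exactly the standard block-theoretic reasoning the authors took for granted. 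One could quibble that you should explicitly invoke that $J(\Lambda b_{1,1})$ is a two-sided nilpotent ideal when identifying the splitting idempotent, but this is routine for a finite-dimensional algebra and your argument is sound.
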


\begin{proof}
This follows from Lemma \ref{7.2}.
\end{proof}

In the context of the biset category,
Rognerud \cite[7.6]{Rog19} noted that,
for finite $\cK$, an analogous simple
module of $\kay B_\cK$ (again called the
trivial module) is projective if and only
if $\kay B_\cK$ is semisimple. That
raises the question as to whether a
similar assertion holds for $\Lambda$.

Regarding $\ell$ as a formal function on
finite groups, let $\varphi$ denote the totient
functon for $\ell$. That is to say, $\varphi$
is the isomorphism invariant formal function
on finite groups determined by the
equivalent conditions
$$\ell(U) = \sum_{I \in \cS(U)} \varphi(I)
  \; , \dozspace \varphi(I) = \sum_{U \in
  \cS(I)} \mob(U, I) \ell(U)$$
where $U$ is a given finite group in the
first equation, $I$ likewise in the second.
The next result, an observation made by
Hall [Hal36], can be seen straight away by
considering the subgroup generated by
a given $d$-tuple of elements of a given
finite group.

\begin{pro}
\label{8.2} {\rm (Philip Hall.)}
Let $d$ be a positive integer. Suppose
$\ell(n) = n^d$ for all positive integers
$n$. Given a finite group $G$, then
$\varphi(G)$ is the number of
$d$-tuples $(g_1, ..., g_d)$ such that
$\{ g_1, ..., g_d \}$ is a generating set
for $G$. In particular, $\varphi(G)$ is
non-zero if and only if the minimal
number of generators of $G$ is at
most $d$.
\end{pro}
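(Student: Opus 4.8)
The plan is to identify $\varphi$, when $\ell(n) = n^d$, with an explicit counting function and then read off the two assertions. First I would observe that $\ell(G) = |G|^d$ is literally the number of $d$-tuples $(g_1, \ldots, g_d)$ of elements of $G$, i.e.\ the number of maps $\{1,\ldots,d\} \to G$. For a finite group $H$, let $N(H)$ denote the number of $d$-tuples of elements of $H$ that generate $H$; this is visibly an isomorphism invariant. Each $d$-tuple $(g_1,\ldots,g_d) \in G^d$ generates a unique subgroup $\la g_1, \ldots, g_d \ra \leq G$, and the $d$-tuples in $G^d$ generating a prescribed subgroup $H \leq G$ are exactly the generating $d$-tuples of $H$. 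Partitioning $G^d$ according to the subgroup generated therefore yields
$$|G|^d = \sum_{H \in \cS(G)} N(H)$$
for every finite group $G$, which is precisely the first of the two equivalent equations defining $\varphi$.

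Next I would deduce that $\varphi(G) = N(G)$ for all finite $G$. The quickest route is strong induction on $|G|$: for $|G| = 1$ both sides equal $1$, and for $|G| > 1$ every proper subgroup has strictly smaller order, so the inductive hypothesis gives $\varphi(H) = N(H)$ for all $H < G$; the displayed identity, together with $\ell(G) = \varphi(G) + \sum_{H < G} \varphi(H)$, then forces $\varphi(G) = |G|^d - \sum_{H<G} N(H) = N(G)$. (Equivalently one may invoke uniqueness of Möbius inversion over the subgroup lattice, using the second defining equation $\varphi(I) = \sum_{U \in \cS(I)} \mob(U,I)\ell(U)$, noting that $\mob(U,I)$ is an isomorphism invariant of the interval $[U,I]$, so the ``formal poset of finite groups'' phrasing introduces no discrepancy with ordinary subgroup-lattice inversion.) This establishes the first assertion.

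Finally, for the ``in particular'' clause, I would note that $N(G) \geq 0$ always, so $\varphi(G) = N(G) \neq 0$ holds if and only if $N(G) > 0$, i.e.\ if and only if there is at least one $d$-tuple of elements of $G$ generating $G$, i.e.\ if and only if $G$ has a generating set of size at most $d$. I do not expect a genuine obstacle here: the proposition is essentially a reindexing of $G^d$. The only points requiring a moment's care are the bijective bookkeeping in the partition step, the matching of the two summation identities to conclude $\varphi = N$ (via the induction, where one must remember proper subgroups have smaller order), and the nonnegativity of $N$ used in the last line.
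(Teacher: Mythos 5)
Your proof is correct and takes essentially the same approach as the paper, which offers only a one-line indication ("can be seen straight away by considering the subgroup generated by a given $d$-tuple of elements"); you have simply made explicit the partition of $G^d$ by generated subgroup, the inductive/M\"obius-inversion step identifying $\varphi$ with the counting function $N$, and the nonnegativity observation for the final clause.
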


When $d = 1$, we have
$\varphi(G) = 0$ unless $G$ is cyclic,
in which case, $\varphi(G) = \phi(|G|)$,
where $\phi$ is the Euler totient function.

We present the next result as a separate
application of Lemma \ref{5.1}, but it
can also be seen as a specialization of
material implicit in the proof of
Boltje-Danz \cite[4.2]{BD13}.

\begin{lem}
\label{8.3}
Given a finite group $G$, then
$$\varphi(G) = \sum_{M, N \in \cS(G)}
  \mob(M, G) \mob(N, G)
  \ell(M \cap N) \; .$$
\end{lem}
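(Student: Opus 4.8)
The plan is to deduce the identity from Lemma~\ref{5.1}, applied to the poset $\cP = \cS(G) \times \cS(G)$ of pairs of subgroups of $G$, ordered componentwise. Since $G$ is finite, $\cP$ is finite, so the poset hypothesis in Lemma~\ref{5.1} is automatic; for the abelian group I would take the additive group of $\kay$. Recalling the abbreviation $\ell(X) = \ell(|X|)$ for a finite group $X$, I would define $\sigma : \kay \leftarrow \cP$ by $\sigma(M, N) = \ell(M \cap N)$.

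The key device is the function $\rho : \cP \leftarrow \cP$ given by $\rho(M, N) = (M \cap N,\, M \cap N)$. I would check that $\rho$ is a downward retraction: clearly $\rho(M, N) \leq (M, N)$ and $\rho^2 = \rho$, while $\rho$ is order-preserving because $M \leq M'$ and $N \leq N'$ force $M \cap N \leq M' \cap N'$. Moreover $\sigma$ is constant on the fibres of $\rho$, since $\sigma(\rho(M, N)) = \ell\big((M \cap N) \cap (M \cap N)\big) = \ell(M \cap N) = \sigma(M, N)$. The image $\cP' = \rho(\cP)$ is the diagonal $\{ (B, B) : B \leq G \}$, isomorphic as a poset to $\cS(G)$ via $B \mapsto (B, B)$.

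Now I would unwind Lemma~\ref{5.1}. Writing $\tau$ for the totient function of $\sigma$ on $\cP$, the lemma yields $\tau(M, N) = 0$ whenever $M \neq N$, and $\tau(B, B)$ equals the value at $B$ of the totient function of the restriction $\sigma|_{\cP'}$. Under the identification $\cP' \cong \cS(G)$, the function $\sigma|_{\cP'}$ becomes $B \mapsto \ell(B)$, so its totient function is $B \mapsto \sum_{A \in \cS(B)} \mob(A, B)\, \ell(A) = \varphi(B)$ by the defining formula for $\varphi$; hence $\tau(B, B) = \varphi(B)$ for all $B \leq G$. On the other hand, evaluating the totient function directly and using the product formula \cite[3.8.2]{Sta11} for the M\"{o}bius function of $\cS(G) \times \cS(G)$,
$$\tau(G, G) = \sum_{(M, N) \in \cP} \mob_\cP\big((M, N), (G, G)\big)\, \sigma(M, N) = \sum_{M, N \in \cS(G)} \mob(M, G)\, \mob(N, G)\, \ell(M \cap N) \; .$$
Since $(G, G) \in \cP'$, the two computations give $\varphi(G) = \tau(G, G) = \sum_{M, N} \mob(M, G)\, \mob(N, G)\, \ell(M \cap N)$, as wanted.

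I do not expect a serious obstacle; the only points demanding a little care are the verification that $\rho$ is order-preserving (so that it is an endomorphism of $\cP$, as Lemma~\ref{5.1} requires) and the bookkeeping that matches $\sigma|_{\cP'}$, under $\cP' \cong \cS(G)$, with the formal function $\ell$ whose totient is $\varphi$. As an alternative not invoking Lemma~\ref{5.1}, one could instead observe that the function on $\cP$ sending each $(A, A)$ to $\varphi(A)$ and each $(A, B)$ with $A \neq B$ to $0$ has sum function $(M, N) \mapsto \sum_{A \leq M \cap N} \varphi(A) = \ell(M \cap N) = \sigma(M, N)$; by the uniqueness of totient functions it must be the totient function of $\sigma$, and evaluating it at $(G, G)$ gives the identity at once.
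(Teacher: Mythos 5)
Your argument is correct and is essentially the same as the paper's: both apply Lemma~\ref{5.1} to the poset $\cS(G) \times \cS(G)$ with $\sigma(M,N) = \ell(M \cap N)$ and the retraction $(M,N) \mapsto (M \cap N, M \cap N)$, whose image is the diagonal, and then identify the totient on the diagonal with $\varphi$. Your closing alternative (verifying directly that the function supported on the diagonal with value $\varphi$ has sum function $\sigma$, then invoking uniqueness of totients) is a nice elementary rephrasing of the same idea, bypassing the retraction machinery.
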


\begin{proof}
Consider the poset
$\cP = \cS(G) {\times} \cS(G)$. Let
$\ell_\cP : \ZZ \leftarrow \cP$ be the
function given by $\ell_\cP(M, N) =
\ell(M \cap N)$ for $(M, N) \in \cP$.
The retraction $(M \cap N, M \cap N)
\mapsfrom (M, N)$ of $\cP$
preserves $\ell_\cP$ and has image
$\cP' = \{ (L, L) : L \in \cS(G) \}$.
Since $\mob_{\cP'}((K, K), (L, L)) =
\mob(K, L)$ for $K, L \leq G$, the
required conclusion follows from
Lemma \ref{5.1}.
\end{proof}

\begin{lem}
\label{8.4}
Let $F$, $G$, $H$ be finite groups and
$B \leq G$. Then $t_{1 \times B}^{F, G}
t_{B \times 1}^{G, H} = \varphi(B)
t_{1 \times 1}^{F, H}$.
\end{lem}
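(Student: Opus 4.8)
The plan is to evaluate the product using Theorem~\ref{5.2}, reduce it to a single round basis element, and then identify the resulting coefficient as $\varphi(B)$ via Lemma~\ref{8.3}.

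First I would record the relevant Goursat data. Writing $I = 1 {\times} B \in \cS(F, G)$ and $J = B {\times} 1 \in \cS(G, H)$, we have $I^\bullet = B = {}^\bullet J$, so the pair $(I, J)$ is strongly compatible, and a direct computation of the star product gives $I * J = 1 {\times} 1$. The only subgroup of $F {\times} H$ lying inside $1 {\times} 1$ is $1 {\times} 1$ itself, so $\ad(I * J) = \{ 1 {\times} 1 \}$. Hence parts (1) and (2) of Theorem~\ref{5.2} collapse the product to
$$t_I^{F, G} \, t_J^{G, H} = \tau_{1 \times 1}^{I, J} \; t_{1 \times 1}^{F, H} \; .$$

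Next I would compute the scalar $\tau_{1 \times 1}^{I, J}$ straight from its defining formula. Since $1 {\times} 1$ is trivial, the constraint $1 {\times} 1 \leq U * V$ is automatic, so $\cP_{1 \times 1}^{I, J} = \cS(I) {\times} \cS(J)$. The subgroups of $I = 1 {\times} B$ are exactly the groups $1 {\times} M$ with $M \leq B$, and the lattice isomorphism $M \mapsto 1 {\times} M$ yields $\mob(1 {\times} M, I) = \mob(M, B)$, while $(1 {\times} M)_\bullet = M$. Symmetrically, the subgroups of $J = B {\times} 1$ are the groups $N {\times} 1$ with $N \leq B$, with $\mob(N {\times} 1, J) = \mob(N, B)$ and ${}_\bullet(N {\times} 1) = N$. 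Therefore $\sigma(1 {\times} M, N {\times} 1) = \ell(M \cap N)$, and the defining sum becomes
$$\tau_{1 \times 1}^{I, J} = \sum_{M, N \in \cS(B)} \mob(M, B) \, \mob(N, B) \, \ell(M \cap N) \; ,$$
which equals $\varphi(B)$ by Lemma~\ref{8.3}. Substituting this into the previous display finishes the proof.

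I do not expect a serious obstacle. The points requiring a little care are the two M\"{o}bius-function identifications under the lattice isomorphisms $M \mapsto 1 {\times} M$ and $N \mapsto N {\times} 1$, and the observation that, because $K = 1 {\times} 1$ is trivial, it imposes no condition on $U * V$, so that $\cP_{1 \times 1}^{I, J}$ really is the full product of the two subgroup posets. One could instead run the computation through part (4) of Theorem~\ref{5.2}, but since $\cP_{1 \times 1}^{I, J}$ is already the full product lattice there is nothing to be gained by passing to $\cR_{1 \times 1}^{I, J}$.
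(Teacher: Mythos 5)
Your proof is correct, and the computation is sound: $\cP_{1\times 1}^{I,J}$ really is the full product $\cS(I)\times\cS(J)\cong\cS(B)\times\cS(B)$ because the constraint $K\leq U*V$ is vacuous for $K$ trivial, the M\"{o}bius identifications under the lattice isomorphisms $M\mapsto 1{\times}M$ and $N\mapsto N{\times}1$ are exactly right, and $\sigma(1{\times}M,\,N{\times}1)=\ell(M\cap N)$ by the definition of $\sigma$.

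Your route is genuinely different from the paper's, though. The paper cites Lemma~\ref{7.4} (together with Lemma~\ref{8.3}): taking $A=C=1$ and $\phi=\psi\colon 1\leftarrow B$ the trivial epimorphism, one has $\ltri(\phi)=1{\times}B$, $\rtri(\psi)=B{\times}1$, $\ker\phi=\ker\psi=B$, and the rider plus the single-sum formula of Lemma~\ref{7.4} give
$t_I^{F,G}t_J^{G,H}=\tau_{1\times 1}^{I,J}\,t_{1\times 1}^{F,H}$ with
$\tau_{1\times 1}^{I,J}=\sum_{S\leq B}\mob(S,B)\,\ell(S)=\varphi(B)$
by the very definition of $\varphi$. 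So the paper's path absorbs the poset retraction into Lemma~\ref{7.4} and lands directly on the defining single sum for $\varphi(B)$. You instead stay with the raw double-sum definition of $\tau_K^{I,J}$ and invoke Lemma~\ref{8.3} to recognise the result. Both are correct; your version is slightly more self-contained (it bypasses Lemma~\ref{7.4}) but re-derives by hand what Lemma~\ref{7.4} packages, while the paper's version is shorter given the lemmas already in place. Your closing remark that passing to $\cR_{1\times 1}^{I,J}$ via part~(4) of Theorem~\ref{5.2} ``gains nothing'' is, in fact, exactly the reduction the paper does use (through Lemma~\ref{7.4}), and it does buy something: it replaces the double sum by a single sum and makes Lemma~\ref{8.3} unnecessary for this particular computation.
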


\begin{proof}
This follows from Lemmas
\ref{7.4} and \ref{8.3}.
\end{proof}

For arbitrary $\cK$, let $P_{1, 1}$
denote the projective cover of
$S_{1, 1}$. Given $G \in \cK$, let
$i_G = s_{1 \times 1}^{G \times G}
= t_{1 \times 1}^{G \times G}$. Using
Theorem \ref{5.2}, it is easy to show
that $i_G$ is primitive as an idempotent
of $\End_\Lambda(G)$ and hence also
as an idempotent of $\Lambda$. So
$\Lambda i_G$ is an indecomposable
projective $\Lambda$-module.

\begin{lem}
\label{8.5}
With the notation above, $\Lambda i_G
\cong P_{1, 1}$ as $\Lambda$-modules.
\end{lem}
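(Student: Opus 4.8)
The plan is to identify the simple top of the indecomposable projective module $\Lambda i_G$ and show it is $S_{1,1}$. Since $\Lambda i_G$ is indecomposable projective, it has a unique simple quotient, and it suffices to produce a nonzero $\Lambda$-homomorphism $\Lambda i_G \to S_{1,1}$, equivalently (by projectivity and the standard $\Hom_\Lambda(\Lambda i_G, M) \cong i_G M$ identification) to show $i_G S_{1,1} \neq 0$. By the characterization of $S_{1,1}$ as the simple module with minimal group $1$, we have $S_{1,1}(1) \neq 0$ where $1$ denotes the trivial group. Recall $i_G = t_{1 \times 1}^{G,G}$, so the claim $i_G S_{1,1} \neq 0$ means $t_{1\times 1}^{G,G}$ does not annihilate $S_{1,1}$. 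To see this, I would factor $t_{1\times 1}^{G,G} = t_{1 \times 1}^{G,1} \, t_{1 \times 1}^{1,G}$ (both factors are round-basis elements connecting $G$ and the trivial group, and this factorization is the $B=1$ case of Lemma \ref{8.4}, noting $\varphi(1)=\ell(1)=1$). Since $S_{1,1}(1)\neq 0$, pick $0 \neq x \in S_{1,1}(1)$; applying $t_{1\times 1}^{G,1}$ gives an element of $S_{1,1}(G)$, and the identity $t_{1\times 1}^{1,G} t_{1\times 1}^{G,1} = \varphi(1) \, t_{1\times 1}^{1,1} = \id_1^\Lambda$ (again Lemma \ref{8.4}) shows this element is nonzero. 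Hence $i_G S_{1,1} \neq 0$.

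**Next I would** deduce that the projective cover of $S_{1,1}$, restricted appropriately, is $\Lambda i_G$. Since $i_G$ is primitive in $\Lambda$ (stated in the excerpt just before the lemma) and $i_G S_{1,1} \neq 0$, the indecomposable projective $\Lambda i_G$ has simple top isomorphic to $S_{1,1}$; by uniqueness of projective covers, $\Lambda i_G \cong P_{1,1}$. The only subtlety is the infinite-dimensional case when $\cK$ is infinite: here $\Lambda$ is only locally unital, so I should phrase the argument via corner subalgebras. Concretely, for any finite $\cO \subseteq \cK$ with $G \in \cO$, the idempotent $e = \sum_{F \in \cO} \id_F^\Lambda$ gives a unital corner algebra $e\Lambda e = \Lambda_\cO$, and $i_G \in \Lambda_\cO$ remains primitive there; Proposition \ref{2.1} identifies simple $\Lambda_\cO$-modules with nonzero-restriction simple $\Lambda$-modules, and $e S_{1,1} \neq 0$ (indeed $\id_1^\Lambda$ need not lie in $e$, but $i_G \in e$ and $i_G S_{1,1}\neq 0$ suffices). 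Within $\Lambda_\cO$, standard unital-ring theory gives $\Lambda_\cO i_G \cong P^{\Lambda_\cO}_{1,1}$, the projective cover of the simple $\Lambda_\cO$-module $e S_{1,1}$. Passing to the colimit over $\cO$ — or, more cleanly, invoking that $\Lambda i_G = \Lambda e i_G = (\Lambda e)(e \Lambda e) i_G$ together with the fact that $\Lambda e \otimes_{e\Lambda e} -$ carries projective covers to projective covers when $e\Lambda e$ is a unital corner — yields $\Lambda i_G \cong P_{1,1}$.

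**The main obstacle** I anticipate is the bookkeeping in the non-unital case: ensuring that ``projective cover'' is the right notion for $\Lambda$-modules (which it is, since each $M(F)$ is finite-dimensional by the standing assumption in Section 4, so the radical is well-behaved locally) and that the colimit argument over finite $\cO \subseteq \cK$ genuinely produces $P_{1,1}$ rather than something larger. This is handled by the ``diamond diagram of corner embeddings'' technique already used for well-definedness of $S_{E,W}$ in Section 3, so it is routine given the machinery in place. The genuinely new input — the nonvanishing $i_G S_{1,1} \neq 0$ — is immediate from Lemma \ref{8.4} with $B = 1$, which is why the lemmas in this section were arranged in this order; so modulo the locally-unital formalities, the proof is short.

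\begin{proof}
Recall from Section 4 that $S_{1,1}(1) \neq 0$, where $1$ denotes the trivial group. Fix $0 \neq x \in S_{1,1}(1)$. By Lemma \ref{8.4} with $B = 1$, and since $\varphi(1) = \ell(1) = 1$, we have
$$t_{1 \times 1}^{1, G} \, t_{1 \times 1}^{G, 1} = \id_1^\Lambda \; , \dozspace t_{1 \times 1}^{G, 1} \, t_{1 \times 1}^{1, G} = i_G \; .$$
Hence $t_{1\times 1}^{1,G}(t_{1\times 1}^{G,1} x) = x \neq 0$, so $t_{1 \times 1}^{G,1} x \in S_{1,1}(G) - \{0\}$ and in particular $i_G S_{1,1} = t_{1\times 1}^{G,1} t_{1\times 1}^{1,G} S_{1,1} \neq 0$.

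Since $i_G$ is primitive as an idempotent of $\Lambda$, the module $\Lambda i_G$ is indecomposable projective, with a unique maximal submodule and simple top. As $i_G S_{1,1} \neq 0$, the canonical map $\Lambda i_G \to S_{1,1}$, $\lambda i_G \mapsto \lambda i_G s$, is nonzero for a suitable $s \in S_{1,1}$, so the top of $\Lambda i_G$ is isomorphic to $S_{1,1}$. To see that $\Lambda i_G$ is the projective cover $P_{1,1}$ and not merely a projective module with top $S_{1,1}$, choose a finite $\cO \subseteq \cK$ with $G \in \cO$ and put $e = \sum_{F \in \cO} \id_F^\Lambda$, a unital corner idempotent with $e \Lambda e = \Lambda_\cO$. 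Then $i_G \in e\Lambda e$ is still primitive, and $e \Lambda i_G = (e\Lambda e) i_G = \Lambda_\cO i_G$ is the projective cover of the simple $\Lambda_\cO$-module $e S_{1,1}$ (which is nonzero as $i_G e S_{1,1} = i_G S_{1,1} \neq 0$). Since $\Lambda i_G = \Lambda e \cdot i_G$ and $\Lambda e \otimes_{e\Lambda e} -$ sends the projective cover of $e S_{1,1}$ to the projective cover of $S_{1,1}$ by Proposition \ref{2.1}, we conclude $\Lambda i_G \cong P_{1,1}$.
\end{proof}
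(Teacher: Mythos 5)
Your proof takes essentially the same approach as the paper's. The paper assumes (via Proposition \ref{2.1}) that $\cK$ owns a trivial group $1$, then observes that $s_{1 \times 1}^{1, G}$ and $s_{1 \times 1}^{G, 1}$ exhibit $i_G$ and $i_1$ as associate idempotents, whence $i_G S_{1,1} \neq 0$. Since $s_{1\times 1} = t_{1\times 1}$, this is exactly your factorization $t_{1\times 1}^{G,1} t_{1\times 1}^{1,G} = i_G$ and $t_{1\times 1}^{1,G} t_{1\times 1}^{G,1} = i_1$; invoking Lemma \ref{8.4} with $B=1$ is a different way of labelling the same computation. The one thing you should make explicit, as the paper does, is the reduction to the case $1 \in \cK$: the elements $t_{1\times 1}^{1,G}$, $t_{1\times 1}^{G,1}$ and the evaluation $S_{1,1}(1)$ only make sense once $1 \in \cK$, and the justification is precisely Proposition \ref{2.1} (enlarge $\cK$ to $\cK \cup \{1\}$, prove the statement there, then restrict back along the corner embedding). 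Your additional paragraph on the locally-unital bookkeeping is more detailed than the paper's (which treats the step $i_G S_{1,1} \neq 0 \Rightarrow \Lambda i_G \cong P_{1,1}$ as immediate once $i_G$ is known primitive), but it is in the right spirit and consistent with the machinery established in Sections 2--4.
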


\begin{proof}
By Proposition \ref{2.1}, we may
assume that $\cK$ owns a trivial
group $1$. Also writing $1$ to denote
the trivial subgroup of $G$ then, by
considering the elements
$s_{1 \times 1}^{1 \times G}$ and
$s_{1 \times 1}^{G \times 1}$, we
see that the idempotents $i_G$ and
$i_1$ are associate. Therefore,
$i_G S_{1, 1} \neq 0$.
\end{proof}

\begin{thm}
\label{8.6}
Suppose $\varphi(B) \neq 0$ for every
subgroup $B$ of every element of
$\cK$. Then $S_{1, 1}$ is projective
and injective, furthermore, if $\cK$
is finite, then the algebra
$\Lambda b_{1, 1}$ is simple.
\end{thm}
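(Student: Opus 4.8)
The plan is to prove that the trivial module $S_{1,1}$ is projective; Lemma \ref{8.1} then gives that $S_{1,1}$ is injective and, when $\cK$ is finite, that the principal block algebra $\Lambda b_{1,1}$ is simple. To prove projectivity, I would first pass to a set $\cK'$ containing $\cK$ and closed under subgroups. This preserves the hypothesis, since a subgroup of an element of $\cK'$ is a subgroup of an element of $\cK$. Moreover $\Lambda_\cK$ is a corner subalgebra of $\Lambda_{\cK'}$, and, by Lemma \ref{8.5}, the restriction of $P_{1,1}^{\cK'}$ to $\Lambda_\cK$ is the projective cover $P_{1,1}^\cK$; so, by Proposition \ref{2.1}, $P_{1,1}^\cK$ is simple as soon as $P_{1,1}^{\cK'}$ is. We may thus assume $1 \in \cK$, in which case $i_1 = \id_1^\Lambda$ (as $\End_\Lambda(1) \cong \kay$) and, by Lemma \ref{8.5}, $P_{1,1} = \Lambda . \id_1^\Lambda = \bigoplus_{F \in \cK} \Lambda(F, 1)$ is the projective cover of $S_{1,1}$, hence indecomposable with simple top $S_{1,1}$. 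Since $P_{1,1}(1) = \kay i_1$ is one-dimensional and surjects onto $S_{1,1}(1) \neq 0$, the unique maximal submodule $Q$ of $P_{1,1}$ satisfies $Q(1) = 0$; it remains to show $Q = 0$.

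Suppose, for a contradiction, $Q \neq 0$. Then $Q(G) \neq 0$ for some nontrivial $G \in \cK$; I would fix such a $G$ of minimal order, choose a nonzero $v \in Q(G) \subseteq \Lambda(G, 1)$, and write $v = \sum_{A \leq G} c_A \, t_{A \times 1}^{G, 1}$ in the round basis, the sum over subgroups $A$ of $G$. The crucial claim is that $c_G \neq 0$. If instead $c_G = 0$, let $A_0$ have maximal order among the $A$ with $c_A \neq 0$, so $A_0 < G$. By part (3) of Theorem \ref{5.2}, $t_{\Delta(A_0)}^{A_0, G} \, t_{A \times 1}^{G, 1} = 0$ for every $A \neq A_0$ (the pair is not strongly compatible), and part (1) of Lemma \ref{6.1} gives $t_{\Delta(A_0)}^{A_0, G} \, t_{A_0 \times 1}^{G, 1} = t_{A_0 \times 1}^{A_0, 1}$; hence $t_{\Delta(A_0)}^{A_0, G} . v = c_{A_0} \, t_{A_0 \times 1}^{A_0, 1} \neq 0$. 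But $A_0 \in \cK$ (closure under subgroups) with $|A_0| < |G|$, so $t_{\Delta(A_0)}^{A_0, G} . v$ lies in $\id_{A_0}^\Lambda . Q = Q(A_0) = 0$, a contradiction. Hence $c_G \neq 0$.

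Now I would multiply $v$ on the left by $t_{1 \times G}^{1, G} \in \Lambda(1, G)$. By part (3) of Theorem \ref{5.2}, $t_{1 \times G}^{1, G} \, t_{A \times 1}^{G, 1} = 0$ for every subgroup $A < G$, while Lemma \ref{8.4} (with $F = H = 1$ and $B = G$) gives $t_{1 \times G}^{1, G} \, t_{G \times 1}^{G, 1} = \varphi(G) \, i_1$. So $t_{1 \times G}^{1, G} . v = c_G \varphi(G) \, i_1$. The left-hand side lies in $\id_1^\Lambda . Q = Q(1) = 0$, but the right-hand side is nonzero, since $c_G \neq 0$, $\varphi(G) \neq 0$ by hypothesis (as $G \in \cK$), $i_1 \neq 0$, and $\kay$ is a field. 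This contradiction forces $Q = 0$, so $P_{1,1} = S_{1,1}$ is simple and $S_{1,1}$ is projective, which completes the proof.

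I expect the main obstacles to be the bookkeeping of the reduction step --- verifying that projectivity descends along the corner embedding $\Lambda_\cK \hookrightarrow \Lambda_{\cK'}$ --- and the Goursat-parameter computations identifying the relevant round-basis products, namely $t_{\Delta(A_0)}^{A_0, G} \, t_{A_0 \times 1}^{G, 1} = t_{A_0 \times 1}^{A_0, 1}$ and the vanishing of the strongly incompatible products; all else is a direct appeal to Lemmas \ref{6.1} and \ref{8.4} and Theorem \ref{5.2}.
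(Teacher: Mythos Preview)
Your argument is correct and follows the same overall strategy as the paper: show that $P_{1,1}$ is simple by using the round basis together with Lemma~\ref{8.4}, then invoke Lemma~\ref{8.1}. The differences are that the paper's proof is shorter and avoids both your reduction step and your minimality argument. Rather than adjoining the trivial group and working with $P_{1,1}=\Lambda i_1$, the paper uses Lemma~\ref{8.5} to realize $P_{1,1}$ as $\Lambda i_G$ for \emph{any} $G\in\cK$ with $Q(G)\neq 0$. A nonzero $y\in Q(G)\subseteq\Lambda(G,G)i_G$ then expands as $y=\sum_{B\leq G} y_B\, t_{B\times 1}^{G,G}$, and for \emph{any} $B$ with $y_B\neq 0$ the same strong-compatibility observation you use (Theorem~\ref{5.2}(3)) plus Lemma~\ref{8.4} give $t_{1\times B}^{G,G}\,y = y_B\,\varphi(B)\,i_G$, whence $i_G\in Q$ and $Q=P_{1,1}$. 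So there is no need to single out the top coefficient $c_G$, no need for closure under subgroups, and no need for the minimality induction. Incidentally, even within your own argument the ``maximal order'' requirement on $A_0$ is superfluous: strong compatibility already forces $A=A_0$ for any choice of $A_0$ with $c_{A_0}\neq 0$.
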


\begin{proof}
By Lemma \ref{8.1}, it suffices to
show that $P_{1, 1}$ is simple. Let $Q$
be a non-zero submodule of $P_{1, 1}$.
Let $G \in \cK$ such that $Q(G) \neq 0$.
Lemma \ref{8.5} allows us to put
$P_{1, 1} = \Lambda i_G$. If we can show
that $i_G \in Q(G)$, then it will follow that
$Q = P_{1, 1}$, and the simplicity of
$P_{1, 1}$ will be established. Let
$y \in Q(G) - \{ 0 \}$. By part (3) of
Theorem \ref{5.2},
$$y = \sum_{B \in \cS(G)}
  y_B t_{B, 1}^{G, G}$$
with each $y_B \in \kay$. Choose
$B$ such that $y_B \neq 0$. By
Lemma \ref{8.4}, $t_{1, B}^{G, G} y
= y_B \varphi(B) i_G$ Therefore,
$i_G \in Q(G)$, as required.
\end{proof}

The theorem together with
Proposition \ref{8.2} yields a
corollary.

\begin{cor}
\label{8.7}
Let $d$ be a positive integer. Suppose
$\ell(n) = n^d$ for all positive integers
$n$. Suppose also that every subgroup
of every element of $\cK$ has minimal
number of generators at most $d$.
Then the conclusion of Theorem
\ref{8.6} holds.
\end{cor}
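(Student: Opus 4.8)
The plan is essentially to read off the hypothesis of Theorem \ref{8.6} from the hypothesis stated here by means of Proposition \ref{8.2}. First I would fix a subgroup $B$ of an element of $\cK$; by assumption, the minimal number of generators of $B$ is at most $d$. Since $\ell(n) = n^d$ for every positive integer $n$, Proposition \ref{8.2} identifies $\varphi(B)$ with the number of $d$-tuples $(g_1, ..., g_d)$ of elements of $B$ such that $\{ g_1, ..., g_d \}$ is a generating set of $B$, and the same proposition records that this count is non-zero exactly when the minimal number of generators of $B$ does not exceed $d$. Hence $\varphi(B) \neq 0$. Letting $B$ range over all subgroups of all elements of $\cK$, we obtain precisely the hypothesis $\varphi(B) \neq 0$ required by Theorem \ref{8.6}. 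Applying that theorem yields the conclusion: $S_{1, 1}$ is projective and injective, and, when $\cK$ is finite, the principal block algebra $\Lambda b_{1, 1}$ is simple.

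There is no genuine obstacle here; all the substantive work has been done in Theorem \ref{8.6} and Proposition \ref{8.2}, and this corollary merely isolates the cleanest family of instances of the former. It would be worth pointing out the case $d = 1$ explicitly: there, the hypothesis asserts that every subgroup of every element of $\cK$ has minimal number of generators at most $1$, that is, is cyclic; since subgroups of cyclic groups are cyclic, this is equivalent to requiring every element of $\cK$ to be cyclic, so in that case we recover the projectivity and injectivity of the trivial $\Lambda$-module in the situation of Theorem \ref{1.4}.
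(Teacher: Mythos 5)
Your proposal is correct and follows exactly the route the paper intends: verify the hypothesis $\varphi(B) \neq 0$ of Theorem \ref{8.6} via Proposition \ref{8.2}, then apply Theorem \ref{8.6}. The added remark about the case $d = 1$ connecting to Theorem \ref{1.4} is a pleasant observation but not part of the proof.
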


\section{Deformation of the biset category}

We shall introduce a small $\kay$-linear
category $\Gamma$ on $\cK$, in other
words, an algebra $\Gamma$ over $\kay$
equipped with a complete family of
mutually orthogonal idempotents
$(\id_G^\Gamma : G \in \cK)$. Again,
$\Gamma$ will be determined by $\cK$,
$\kay$, $\ell$. A case of motivating
importance is that where $\ell$ is the
inclusion of the positive integers in
$\kay$, we mean to say, $\ell(n) = n$
for all positive integers $n$. In that
case, we shall find that $\Gamma$
coincides with $\kay \cB_\cK$, the
$\kay$-linear biset category on $\cK$.
That will justify interpretation of
$\Gamma$ as a deformation of
$\kay \cB_\cK$.

Let us briefly summarize the construction
of $\kay \cB_\cK$ and some other
related categories, starting with the
{\bf biset category} $\cB$. For a full
discussion, see Bouc
\cite[Chapters 2, 3]{Bou10}. The objects
of $\cB$ are the finite groups. Consider
finite groups $F$, $G$, $H$. An
{\bf $F$-$G$-biset} is defined to be an
$F {\times} G$-set with the action of $F$
written on the left, the action of $G$
written on the right. For an $F$-$G$-biset
$X$ and a $G$-$H$-biset $Y$, we write
$X \times_G Y$ to denote the set of
$G$-orbits of $X {\times} Y$, and we
regard $X \times_G Y$ as an
$F$-$H$-biset. The morphism set
$\cB(F, G)$ the Grothendieck group of
the category of $F$-$G$-bisets, with
relations such that addition is induced
by disjoint union. Thus, $\cB(F, G)$ has
a $\ZZ$-basis consisting of the
isomorphism classes $[(F {\times} G)/U]$
of the $F$-$G$-bisets having the
form $(F {\times} G)/U$, where
$U \in \cS(F, G)$. Note that
$[(F {\times} G)/U]$ is uniquely
determined by the conjugacy class
of $U$. The composition operation
$\cB(F, H) \leftarrow \cB(F, G) \times
\cB(G, H)$ is the $\ZZ$-linear map
induced by the operation $\times_G$
on bisets. Explicitly, \cite[2.3.34]{Bou10}
tells us that, for $U \in \cS(F, G)$ and
$V \in \cS(G, H)$, we have
$$\left[ \frac{F {\times} G}
  {U} \right] \left[ \frac{G {\times} H}
  {V} \right] = \sum_{U^\bullet . g .
  {}^\bullet V \subseteq G} \left[
  \frac{F {\times} H}
  {U * {}^{g \times 1} V} \right]$$
where the notation indicates that $g$
runs over representatives of the
double cosets of $U^\bullet$ and
${}^\bullet V$ in $G$.

We define the {\bf $\kay$-linear
biset category} $\kay \cB$ to be the
$\kay$-linear category whose objects
are the finite groups, the morphism
$\kay$-module $\kay \cB(F, G)$ is
the $\kay$-linear extension of
$\cB(F, G)$ and the composition for
$\kay \cB$ is the $\kay$-linear
extension of the composition for
$\cB$. Note that $\kay \cB$ is not
to be confused with the
$\kay$-linearization of $\cB$, which
could be expressed with the same
notation. As a direct sum of regular
$\kay$-modules,
$$\kay \cB(F, G) = \bigoplus_{U
  \in_{F \times G} \cS(F, G)} \kay
  \left[ \frac{F {\times} G} {U} \right]$$
with the notation indicating that $U$
runs over representatives of the conjugacy
classes of subgroups of $F {\times} G$.

We introduce a $\kay$-linear category
$\kay_\sigma \cB$, defined as follows.
The objects of $\kay_\sigma \cB$ are
the finite groups. The morphism
$\kay$-module $\kay_\sigma
\cB(F, G)$ has a formal $\kay$-basis
$\{ d_U^{F, G} : U \in_{F \times G}
\cS(F, G) \}$. We define
the composition to be such that
$$d_U^{F, G} d_V^{G, U} =
  \sum_{U^\bullet . g . {}^\bullet V
  \subseteq G} \frac{\ell(U_\bullet \cap
  {}^g ({}_\bullet V))}{|U_\bullet \cap
  {}^g ({}_\bullet V)|} \,
  d_{U * {}^{g \times 1} V}^{F, H} \; .$$
Some properties need to be checked.
It is not hard to see that the right-hand
expression is well-defined in that the
value does not change when $U$ and
$V$ are replaced by conjugate subgroups
or when a different choice is made for
the double coset representatives $g$.
We postpone, to Theorem \ref{9.3},
proof of the associativity of the
composition for $\kay_\sigma \cB$.
It is clear already that, if associativity
does hold, then $\kay_\sigma \cB$
is a $\kay$-linear category whose
identity morphism on $G$ is
$d_{\Delta(G)}^{G, G}$. It is also
clear already that, in the motivating
case mentioned above, where $\ell$
is the inclusion of the positive integers,
$\kay_\sigma \cB$ is indeed a
$\kay$-linear category and, in fact,
$\kay_\sigma \cB$ can be identified
with $\kay \cB$ by identifying each
$d_U^{F, G}$ with $[(F {\times} G)/U]$.

Now let us discuss the category
$\kay_\sigma \cS$. We make
$\kay_\sigma \cS(F, G)$ become
an $\FF(F {\times} G)$-module
via the conjuation action of
$F {\times} G$ on $\cS(F, G)$.
That is to say,
$${}^{f \times g} s_U^{F, G} =
  s_{{}^{f \times g} U}^{F, G} \; .$$
To describe the action in another
way, we introduce the unity-preserving
algebra map $\sigma_G : \kay_\sigma
\cS(G, G) \leftarrow \kay G$
given by $\sigma_G(g) =
s_{\Delta(G, g, G)}^{G, G}$
where $\Delta(G, g, G) =
\{ {}^g b \times b : b \in G \}$.

\begin{rem}
\label{9.1}
With the notation above, given
$x \in \kay_\sigma \cS(F, G)$,
then ${}^{f \times g} x =
\sigma_F(f) . x . \sigma_G(g^{-1})$.
\end{rem}

\begin{proof}
By $\kay$-linearity, we may assume
that $x = s_U^{F, G}$, whereupon
the verification is an easy calculation.
\end{proof}

Since $\kay G$ is semisimple, the
principal block of $\kay G$ is
$e_G = \sum_{g \in G} g / |G|$.
We define
$$\oo{s}_U^{F, G} = \sigma_F(e_F)
  . s_U^{F, G} . \sigma_G(e_G) =
  \frac{1}{|F| . |G|} \sum_{f \in F, g \in
  G} s_{{}^{f \times g} U}^{F, G} \; .$$
Plainly, $\oo{s}_U^{F, G}$ depends
only on the $F {\times} G$-conjugacy
class of $U$. We define a category
$\oo{\kay_\sigma \cS}$ such that
the objects are the finite groups and
the $\kay$-module of morphisms
$F \leftarrow G$ is the
$F {\times} G$-fixed submodule
$$\oo{\kay_\sigma \cS}(F, G) =
  (\kay_\sigma \cS(F, G))^{F
  \times G} = \!\!\! \bigoplus_{U
  \in_{F \times G} \cS(F, G)} \!\!\!
  \kay \oo{s}_U^{F, G} \; .$$
It is easy to check that
$\oo{\kay_\sigma \cS}$ is a
$\kay$-linear category whose
identity morphism on $G$ is the
element $\sigma_G(e_G) =
\oo{s}_{\Delta(G)}^{G, G}$. The
next theorem describes how
$\kay_\sigma \cB$ can be identified
with $\oo{\kay_\sigma \cS}$. First,
we need a lemma.

\begin{lem}
\label{9.2}
Let $F$, $G$, $H$ be finite groups.
Let $U \in \cS(F, G)$ and
$V \in \cS(G, H)$. Then
$$|U| . |V| = |U^\bullet . {}^\bullet V|
  . | U_\bullet \cap {}_\bullet V| .
  |U * V| \; .$$
\end{lem}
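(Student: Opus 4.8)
The plan is to count, in two ways, the fibre product
$$P = \{ (f {\times} g,\, g {\times} h) \in U \times V \} ,$$
the subgroup of $U \times V$ consisting of the pairs whose two $G$-coordinates agree.

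First I would record the elementary cardinalities coming from Goursat's Theorem (Proposition \ref{2.4}). Since, for each element of $U^\bullet$, the first coordinate of the corresponding element of $U$ ranges over a coset of ${}_\bullet U$, we have $|U| = |U^\bullet| \cdot |{}_\bullet U| = |{}^\bullet U| \cdot |U_\bullet|$, and likewise $|V| = |{}^\bullet V| \cdot |V_\bullet|$. Hence $|U| \cdot |V| = |U^\bullet| \cdot |{}^\bullet V| \cdot |{}_\bullet U| \cdot |V_\bullet|$, and by the product formula for two subgroups of $G$ (no normality needed, reading $U^\bullet \cdot {}^\bullet V$ as the product set) we have $|U^\bullet| \cdot |{}^\bullet V| = |U^\bullet \cdot {}^\bullet V| \cdot |U^\bullet \cap {}^\bullet V|$.

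Next I would compute $|P|$ via the projection to the common $G$-coordinate, $(f {\times} g, g {\times} h) \mapsto g$. Its image is $U^\bullet \cap {}^\bullet V$, and the fibre over a given $g$ is the set of admissible $f$ (a coset of ${}_\bullet U$) times the set of admissible $h$ (a coset of $V_\bullet$), so it has size $|{}_\bullet U| \cdot |V_\bullet|$; thus $|P| = |U^\bullet \cap {}^\bullet V| \cdot |{}_\bullet U| \cdot |V_\bullet|$. Then I would compute $|P|$ a second way, via the homomorphism $(f {\times} g, g {\times} h) \mapsto f {\times} h$ to $F {\times} H$: by the definition of the star product its image is exactly $U * V$, while its kernel consists of the pairs $(1 {\times} g, g {\times} 1)$ with $1 {\times} g \in U$ and $g {\times} 1 \in V$, i.e. with $g \in U_\bullet \cap {}_\bullet V$; hence $|P| = |U * V| \cdot |U_\bullet \cap {}_\bullet V|$.

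Finally I would equate the two expressions for $|P|$, obtaining $|U^\bullet \cap {}^\bullet V| \cdot |{}_\bullet U| \cdot |V_\bullet| = |U * V| \cdot |U_\bullet \cap {}_\bullet V|$, multiply both sides by $|U^\bullet \cdot {}^\bullet V| = |U^\bullet| \cdot |{}^\bullet V| / |U^\bullet \cap {}^\bullet V|$, and substitute the identities of the second paragraph to reach $|U| \cdot |V| = |U^\bullet \cdot {}^\bullet V| \cdot |U_\bullet \cap {}_\bullet V| \cdot |U * V|$. The only step requiring genuine care is reading off the fibres of the two projections and the kernel correctly from the Goursat data; everything else is routine bookkeeping, and nothing forces $U^\bullet \cdot {}^\bullet V$ or $P$ to be viewed as anything more than sets, although $P$ is in fact a subgroup of $U \times V$.
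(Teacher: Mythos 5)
Your proof is correct and essentially the same as the paper's: you define the fibre product $P$ (the paper calls it $A$, encoded as triples $f \times g \times h$ rather than pairs) and count it twice, once by projecting to the middle coordinate and once by mapping to $F \times H$, then combine with the Goursat cardinalities and the elementary product formula $|U^\bullet|\cdot|{}^\bullet V| = |U^\bullet \cdot {}^\bullet V|\cdot|U^\bullet \cap {}^\bullet V|$.
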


\begin{proof}
Let $A = \{ f {\times} g {\times} h :
f {\times} g \in U, g {\times} h \in V \}$.
Note that $U^\bullet \cap {}^\bullet V
= \{ g : f {\times} g {\times} h \in A \}$.
Fix $f {\times} g {\times} h \in A$. Given
$f' \in F$ and $h' \in H$, then
$f' {\times} g {\times} h' \in A$ if and
only if $f' f^{-1} \in {}_\bullet U$ and
$h' h^{-1} \in V$. So
$$|A| = |U^\bullet \cap {}^\bullet V| .
  |{}_\bullet U| . |V_\bullet| \; .$$
Given $g' \in G$, then $f {\times} g'
{\times} h \in A$ if and only if
$g' g^{-1} \in U_\bullet \cap
{}_\bullet V$. So
$$|A| = |U_\bullet \cap {}_\bullet V|
  . |U * V| \; .$$
Eliminating $|A|$ and using $|U| =
|U^\bullet| . |{}_\bullet U|$ and
$|V| = |{}^\bullet V| . |V_\bullet|$,
we obtain
$$
|U| . |V| . |U^\bullet \cap
  {}^\bullet V| = |U^\bullet| .
  |{}^\bullet V| . |U_\bullet \cap
  {}_\bullet V| . |U * V| \; .$$

\vspace{-0.3in}
\end{proof}

We let $\nu^{F, G} : \oo{\kay_\sigma
\cS}(F, G) \leftarrow \kay_\sigma
\cB(F, G)$ be the $\kay$-linear
isomorphism given by
$$\nu^{F, G}(d_U^{F, G}) =
  |G| \, \oo{s}_U^{F, G} / |U| \; .$$
Let us mention that another way
of expressing the formula is
$\nu^{F, G}(d_U^{F, G}) =
\tr_U^{F, G}(s_U^{F, G}) / |F|$,
where $\tr_U^{F, G}$ denotes the
transfer map from the $U$-fixed
submodule.

\begin{thm}
\label{9.3}
The composition operation on
$\kay_\sigma \cB$ is associative,
$\kay_\sigma \cB$ is a $\kay$-linear
category and the maps $\nu^{F, G}$
determine an isomorphism of
$\kay$-linear categories $\nu :
\oo{\kay_\sigma \cS} \leftarrow
\kay_\sigma \cB$ that acts on
objects as the identity.
\end{thm}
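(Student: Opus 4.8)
The plan is to deduce everything from the fact, already established above, that $\oo{\kay_\sigma \cS}$ is a genuine $\kay$-linear category and hence has an associative composition. Since $|\Delta(G)| = |G|$, the map $\nu^{G,G}$ sends $d_{\Delta(G)}^{G,G}$ to $\oo{s}_{\Delta(G)}^{G,G} = \sigma_G(e_G)$, which is the identity morphism of $\oo{\kay_\sigma \cS}$ on $G$; and each $\nu^{F,G}$ is already a $\kay$-linear isomorphism. So it suffices to prove the single compatibility identity
$$\nu^{F,H}(d_U^{F,G} \, d_V^{G,H}) = \nu^{F,G}(d_U^{F,G}) \circ \nu^{G,H}(d_V^{G,H})$$
for all finite groups $F, G, H$ and all $U \in \cS(F,G)$, $V \in \cS(G,H)$, where $\circ$ is composition in $\oo{\kay_\sigma \cS}$; by $\kay$-bilinearity it is enough to check this on basis elements. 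Granting the identity, the $\kay$-linear bijections $\nu^{F,G}$ transport the associative composition of $\oo{\kay_\sigma \cS}$ exactly onto the formula-defined operation on $\kay_\sigma \cB$. Hence that operation is well defined and associative, $\kay_\sigma \cB$ is a $\kay$-linear category whose identity morphism on $G$ is $d_{\Delta(G)}^{G,G}$, and $\nu$ is an isomorphism of $\kay$-linear categories acting as the identity on objects.

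To evaluate the right-hand side, I would first compute $\oo{s}_U^{F,G} \circ \oo{s}_V^{G,H}$, which, since $\sigma_G(e_G)$ is an idempotent and $\oo{s}_U^{F,G} = \sigma_F(e_F) \, s_U^{F,G} \, \sigma_G(e_G)$, equals $\sigma_F(e_F) \, s_U^{F,G} \, \sigma_G(e_G) \, s_V^{G,H} \, \sigma_H(e_H)$ inside $\kay_\sigma \cS$. Expanding $\sigma_G(e_G) = \frac{1}{|G|} \sum_{g \in G} s_{\Delta(G,g,G)}^{G,G}$ and using that ${}_\bullet \Delta(G,g,G) = 1$, so $\sigma(U, \Delta(G,g,G)) = \ell(1) = 1$, together with the elementary identities $U * \Delta(G,g,G) = {}^{1 \times g^{-1}} U$ and $({}^{1 \times g} U) * V = U * ({}^{g^{-1} \times 1} V)$ and the conjugation invariance of $\ell$, one obtains
$$\oo{s}_U^{F,G} \circ \oo{s}_V^{G,H} = \frac{1}{|G|} \sum_{g \in G} \ell(U_\bullet \cap {}^g ({}_\bullet V)) \; \oo{s}_{U * ({}^{g \times 1} V)}^{F,H} \; .$$
Both the scalar $\ell(U_\bullet \cap {}^g({}_\bullet V))$ and the element $\oo{s}_{U * ({}^{g \times 1} V)}^{F,H}$ are constant as $g$ runs over a fixed double coset of $U^\bullet$ and ${}^\bullet V$ in $G$ — the former because $U_\bullet \unlhd U^\bullet$ and ${}_\bullet V \unlhd {}^\bullet V$, the latter because $\oo{s}^{F,H}$ depends only on the $F \times H$-conjugacy class of its label and the subgroups $U * ({}^{g \times 1} V)$ form a single such class on each double coset (a standard fact from biset theory, cf.\ \cite[2.3.24]{Bou10}). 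So the sum regroups as $\frac{1}{|G|} \sum_{g_0} |U^\bullet g_0 {}^\bullet V| \, \ell(U_\bullet \cap {}^{g_0}({}_\bullet V)) \, \oo{s}_{U * ({}^{g_0 \times 1} V)}^{F,H}$ over double coset representatives $g_0$, with $|U^\bullet g_0 {}^\bullet V| = |U^\bullet| \, |{}^\bullet V| / |U^\bullet \cap {}^{g_0}({}^\bullet V)|$.

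On the other side, applying $\nu^{F,H}$ to the defining formula for $d_U^{F,G} \, d_V^{G,H}$ and using $\nu^{F,H}(d_W^{F,H}) = |H| \, \oo{s}_W^{F,H} / |W|$ exhibits the left-hand side of the target identity as a sum over the same representatives $g_0$. Comparing the two expressions term by term, cancelling the common factors $|H|$ and $\ell(U_\bullet \cap {}^{g_0}({}_\bullet V))$ and substituting $|U| = |U^\bullet| \, |{}_\bullet U|$ and $|V| = |{}^\bullet V| \, |V_\bullet|$, the required equality reduces to
$$|U| \, |V| \, |U^\bullet \cap {}^{g_0}({}^\bullet V)| = |U^\bullet| \, |{}^\bullet V| \, |U_\bullet \cap {}^{g_0}({}_\bullet V)| \, |U * {}^{g_0 \times 1} V| \; ,$$
which is exactly Lemma \ref{9.2} applied to the pair $(U, {}^{g_0 \times 1} V)$, noting $|{}^{g_0 \times 1} V| = |V|$, ${}^\bullet({}^{g_0 \times 1} V) = {}^{g_0}({}^\bullet V)$, ${}_\bullet({}^{g_0 \times 1} V) = {}^{g_0}({}_\bullet V)$ and $({}^{g_0 \times 1} V)_\bullet = V_\bullet$. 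That finishes the argument.

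The routine-but-delicate part is the middle step: the Mackey-style bookkeeping turning the condensed product $\sigma_F(e_F) \, s_U^{F,G} \, \sigma_G(e_G) \, s_V^{G,H} \, \sigma_H(e_H)$ into a double coset sum — in particular the collapse of the cocycle $\sigma$ to the value $1$ on the ``conjugation'' morphisms $\sigma_G(g)$, its conjugation invariance, and the identity $({}^{1 \times g} U) * V = U * ({}^{g^{-1} \times 1} V)$. These are formally the same manipulations that underlie the composition formula \cite[2.3.34]{Bou10} for $\cB$, so I do not expect a genuine obstacle, only the need to track conjugations carefully.
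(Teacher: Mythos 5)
Your proposal is correct and follows essentially the same route as the paper's proof: reduce to transporting the known associative structure on $\oo{\kay_\sigma \cS}$ through the $\kay$-linear bijections $\nu^{F,G}$, compute $\oo{s}_U^{F,G}\oo{s}_V^{G,H}$ as a sum over $g \in G$, regroup into double cosets, and cancel via Lemma~\ref{9.2}. The only cosmetic difference is in the middle bookkeeping step --- you move the internal averaging onto $U$ via $U * \Delta(G,g,G) = {}^{1\times g^{-1}}U$, whereas the paper invokes Remark~\ref{9.1} to move it onto $V$ --- but both yield the same double-coset expansion.
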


\begin{proof}
Letting $F$, $G$, $H$, $U$, $V$ be
as above, we must show that
$$\nu^{F, G}(d_U^{F, G}) \, \nu^{G, H}
  (d_V^{G, H}) = \nu^{F, H}
  (d_U^{F, G} d_V^{G, H}) \; .$$
Applying Remark \ref{9.1} to evaluate
$\sigma_G(e_G) s_V^{G, H}$, we obtain
$$s_U^{F, G} \sigma_G(e_G) s_V^{G, H}
  = \frac{1}{|G|} \sum_{g \in G}
  \sigma(U, {}^{g \times 1} V) \,
  s_{U * {}^{g \times 1} V}^{F, H} \; .$$
Since $\sigma(U, {}^{g \times 1} V) =
\ell(U_\bullet \cap {}^g ({}_\bullet V))$,
which depends only on
$U^\bullet . g . {}^\bullet V$, we have
$$\oo{s}_U^{F, G} \, \oo{s}_V^{G, H}
  = \frac{1}{|G|} \sum_{U^\bullet . g .
  {}^\bullet V \subseteq G}
  |U^\bullet . g . {}^\bullet V| \,
  \ell(U_\bullet \cap {}^g
  ({}_\bullet V)) \, \oo{s}_{U *
  {}^{g \times 1} V}^{F, H} \; .$$
Lemma \ref{9.2} now yields
$$\frac{\oo{s}_U^{F, G} \,
  \oo{s}_V^{G, H}}{|U| . |V|}
  = \frac{1}{|G|} \sum_{U^\bullet . g .
  {}^\bullet V \subseteq G}
  \frac{\ell(U_\bullet \cap
  {}^g ({}_\bullet V))}{|U_\bullet \cap
  {}^g ({}_\bullet V)|} \, . \,
  \frac{\oo{s}_{U * {}^{g \times 1}
  V}^{F, H}}{|U * {}^{g \times 1} V|} \; .$$
Multiplying by $|G| . |H|$ gives the
required equality.
\end{proof}

The isomorphism $\nu$ does not
preserve the antiisomorphisms induced
by taking opposite subgroups. We
mention that, if $\kay$ owns square
roots of the orders of all the groups in
$\cK$, then $\nu$ can be replaced by
an isomorphism, described as follows,
which does have that symmetry property.
For each $G$, we arbitrarily choose a
square root $\sqrt{|G|}$ of $|G|$. We
then replace $\nu^{F, G}$ with the map
$\sqrt{|F| . |G|} \, \oo{s}_U^{F, G} / |U|
\mapsfrom d_U^{F, G}$.

We now pass to a full subcategory. We
define
$$\Gamma = \kay_\sigma \cB_\cK$$
as a small $\kay$-linear category
and as an algebra over $\kay$. By a
comment earlier in this section,
when $\ell$ is the inclusion of the
positive integers, we can identify
$\Gamma = \kay \cB_\cK$ by
identifying $d_U^{F, G} =
[(F {\times} G)/U]$ for each $F \in
\cK \ni G$ and $U \in \cS(F, G)$.
The latest theorem tells us that
$\Gamma$ is isomorphic to a corner
subalgebra of $\Lambda$.
Corollary \ref{1.3} follows.

The same theorem also shows that, if
every group in $\cK$ is abelian, then
$\Gamma \cong \Lambda$. Now
suppose that every group in $\cK$ is
cyclic. To complete the proof of
Theorem \ref{1.4}, we must show
that $\Lambda$ is locally semisimple.
Let $E$ and $L$ be cyclic groups such
that $|E|$ divides $|L|$ which in turn
divides the order of an element of
$\cK$. By Theorem 7.3, it suffices to
show that the matrix $T_E^L$ is
invertible. Let $\phi, \psi \in \epi(E, L)$.
Let $M$ be the unique subgroup of $L$
such that $|M| = |L| / |E|$. Then
$\ker(\phi) = M = \ker(\psi)$. By
Lemma \ref{7.4}
$$t_{\medtriangleleft(\phi)}^{E, L}
  t_{\medtriangleright(\psi)}^{L, E} =
  \tau_{\Delta(\theta)}^{E, E}
  t_{\Delta(\theta)}^{E, E}$$
where $\theta = \phi \comp \psi^{-1}$.
If $\phi = \psi$, then $\Delta(\theta)
= \Delta(E)$ and
$$T_E^L(\phi, \psi) =
  \tau_{\Delta(E)}^{E, E} =
  \ell(M) \neq 0 \; .$$
If $\phi \neq \psi$, then
$\Delta(\theta) \neq \Delta(E)$ and
$T(\phi, \psi) = 0$. So $T_E^L$ is a
non-zero multiple of the identity
matrix. The proof of Theorem
\ref{1.4} is complete.

\end{document}